\definecolor{darkblue}{rgb}{0,0,0.8}
\newtheorem{theorem}{Theorem}
\newtheorem{lemma}{Lemma}
\newtheorem{assumption}{Assumption}
\newtheorem{remark}{Remark}
\newtheorem{corollary}{Corollary}
\newcommand{\order}[1]{\mathcal{O}\left(#1\right)}
\newcommand{\prt}[1]{\left(#1\right)}
\newcommand{\brk}[1]{\left[#1\right]}
\newcommand{\crk}[1]{\left\{#1\right\}}
\newcommand{\orderi}[1]{\mathcal{O}(#1)}
\newcommand{\prti}[1]{(#1)}
\newcommand{\brki}[1]{[#1]}
\newcommand{\crki}[1]{\{#1\}}
\newcommand{\inpro}[1]{\left\langle #1 \right\rangle}
\newcommand{\condE}[2]{\E\brk{#1\middle|#2}}
\newcommand{\condEi}[2]{\E[#1|#2]}
\newcommand{\norm}[1]{\left\Vert #1 \right\Vert}
\newcommand{\normi}[1]{\Vert #1 \Vert}
\newcommand{\hLambda}{\hat{\Lambda}}
\newcommand{\R}{\mathbb{R}}
\newcommand{\E}{\mathbb{E}}
\newcommand{\x}{\mathbf{x}}
\newcommand{\z}{\mathbf{z}}
\newcommand{\1}{\mathbf{1}}
\newcommand{\T}{\intercal}
\newcommand{\sumn}{\sum_{i=1}^n}
\newcommand{\cN}{\mathcal{N}}
\renewcommand{\d}{\mathbf{d}}
\newcommand{\y}{\mathbf{y}}
\newcommand{\s}{\mathbf{s}}
\newcommand{\e}{\mathbf{e}}
\newcommand{\hU}{\hat{U}}
\newcommand{\diag}{\mathrm{diag}}
\newcommand{\fp}[2]{f_{#1, \pi_{#2}^{#1}}}
\newcommand{\bx}[2]{\bar{x}_{#1}^{#2}}
\newcommand{\xitl}{x_{i,t}^{\ell}}
\newcommand{\Fp}[1]{F_{\pi_{#1}}}
\newcommand{\Bxt}[1]{\1(\bx{t}{#1})^{\T}}
\newcommand{\cF}{\mathcal{F}}
\newcommand{\bzero}{\mathbf{0}}
\newcommand{\cL}{\mathcal{L}}
\newcommand{\fmn}{\frac{1}{mn}\sumn\sum_{\ell=1}^m f_{i,\ell}^*}
\newcommand{\summ}{\sum_{\ell = 1}^m}
\newcommand{\C}{\mathcal{C}}
\newcommand{\0}{\mathbf{0}}
\newcommand{\Ldrr}{\cL^{(\mathrm{D-RR})}}
\newcommand{\Qdrr}{Q^{(\mathrm{D-RR})}}
\newcommand{\sigfmn}{\sigma_f^*}
\newcommand{\cO}{\mathcal{O}}
\definecolor{cuhkpl}{RGB}{152,24,147}
\def\BibTeX{{\rm B\kern-.05em{\sc i\kern-.025em b}\kern-.08em
    T\kern-.1667em\lower.7ex\hbox{E}\kern-.125emX}}
\begin{document}
\title{Distributed Random Reshuffling Methods with Improved Convergence}
\author{Kun Huang, Linli Zhou, and Shi Pu, \IEEEmembership{Senior Member, IEEE}
\thanks{This work was supported by the National Natural Science Foundation of China under Grant 62003287. (Corresponding author: Shi Pu)}
\thanks{Kun Huang and Linli Zhou contribute equally. Kun Huang, Linli Zhou, and Shi Pu are with the School of Data Science, The Chinese University of Hong Kong,
Shenzhen 518172, China (e-mail: \{kunhuang, linlizhou\}@link.cuhk.edu.cn, pushi@cuhk.edu.cn).}}

\maketitle

\begin{abstract}
    This paper proposes two distributed random reshuffling methods, namely Gradient Tracking with Random Reshuffling (GT-RR) and Exact Diffusion with Random Reshuffling (ED-RR), to solve the distributed optimization problem over a connected network, where a set of agents aim to minimize the average of their local cost functions. Both algorithms invoke random reshuffling (RR) update for each agent, inherit favorable characteristics of RR for minimizing smooth nonconvex objective functions, and improve the performance of previous distributed random reshuffling methods both theoretically and empirically. Specifically, both GT-RR and ED-RR achieve the convergence rate of $\mathcal{O}(1/[(1-\lambda)^{1/3}m^{1/3}T^{2/3}])$ in driving the (minimum) expected squared norm of the gradient to zero, where $T$ denotes the number of epochs, $m$ is the sample size for each agent, and $(1-\lambda)$ represents the spectral gap of the mixing matrix. When the objective functions further satisfy the Polyak-{\L}ojasiewicz (PL) condition, we show GT-RR and ED-RR both achieve $\mathcal{O}(1/[(1-\lambda)mT^2])$ convergence rate in terms of the averaged expected differences between the agents' function values and the global minimum value. Notably, both results are comparable to the convergence rates of centralized RR methods (up to constant factors depending on the network topology) and outperform those of previous distributed random reshuffling algorithms. Moreover, we support the theoretical findings with a set of numerical experiments. 
\end{abstract}

\begin{IEEEkeywords}
Distributed Optimization, Stochastic Optimization, Nonconvex Optimization
\end{IEEEkeywords}

\section{Introduction}
\label{sec:introduction}
    \IEEEPARstart{I}{n} this paper, we consider a group of networked agents, labelled $[n] := \crki{1, 2,\ldots, n}$, where each agent $i$ holds its own local cost function $f_i : \R^p \rightarrow \R$. 
    Specifically, each local cost function is associated with a local private dataset with $m$ data points or mini-batches. We investigate how this group of agents collaborate to solve the following optimization problem:
    \begin{equation}
        \label{eq:P_RR}
        \min_{x\in\R^p} \frac{1}{n}\sum_{i=1}^n f_i(x) \quad \text{{with}} \quad f_i(x)= \frac{1}{m}\sum_{\ell = 1}^m f_{i,\ell}(x).
    \end{equation}
    The finite sum structure of $f_i$ finds applications in various fields such as signal processing, distributed estimation, and machine learning. 
    In particular, within the context of distributed machine learning, dealing with the large volume of data, or large $nm$, can be challenging. One of the effective approaches is to consider decentralized stochastic gradient methods, which avoids the need to calculate the full gradients at every iteration and allows each agent to communicate only to its direct neighbors in a connected network. Such a decentralized implementation reduces the latency and improves the algorithmic robustness compared to centralized learning algorithms with a central server \cite{lu2021optimal,nedic2018network}. 

    Decentralized stochastic gradient methods have seen several algorithms being explored, including those presented in \cite{shi2015extra,pu2021distributed,pu2021sharp,tang2018d,yuan2020influence,lian2017can,xin2021improved,li2019decentralized}. The common approach is to use unbiased stochastic gradients conditioned on the previous information, which is typically achieved by uniformly sampling with replacement at each iteration. Recent works on decentralized stochastic gradient algorithms have focused on achieving comparable performance to centralized stochastic gradient descent (SGD) methods that also rely on unbiased stochastic gradients. However, in real-world scenarios, the algorithms used may involve sampling \emph{without} replacement. One common strategy is Random Reshuffling (RR) that is used in PyTorch \cite{paszke2019pytorch} and TensorFlow \cite{abadi2016tensorflow}. RR cycles through data points or mini-batches by permuting them uniformly at random at the beginning of each cycle (epoch). The data points or mini-batches are then selected sequentially following the permuted order for the gradient computation. Compared to SGD with uniform sampling strategy (unshuffled SGD), RR permits the utilization of all the data points in every epoch, leading to better theoretical and empirical performance \cite{bottou2009curiously,bottou2012stochastic,mishchenko2020random,nguyen2020unified,yun2022minibatch}.

    Despite the benefits of RR under the centralized setting, the study and development of decentralized RR methods is fairly limited and less advanced. Currently, only the convergence rate of distributed random reshuffling (D-RR) method, as shown in \cite{huang2023drr}, has demonstrated the superiority of RR compared to unshuffled SGD methods. However, decentralization can slow down the performance of RR, as evidenced by the convergence guarantee $\orderi{1/[(1-\lambda)^3T^{2/3}]}$ for smooth\footnote{Here, the term ``smooth'' refers to the Lipschitz continuity of the gradients of the objective functions.} nonconvex objective functions in \cite{huang2023drr}, where the term $(1-\lambda)$ denotes the spectral gap of the mixing matrix, and $T$ counts the epoch number. By comparison, the centralized RR method achieves a convergence rate of $\orderi{1/(m^{1/3}T^{2/3})}$. The spectral gap can become close to zero as the connectivity of the communication network deteriorates, resulting in a slower convergence rate for D-RR. The unsatisfactory convergence rate of D-RR can be traced back to the algorithmic structure of distributed gradient descent (DGD) method \cite{nedic2009distributed} as well as the conservative analysis in \cite{huang2023drr}.

    In light of the aforementioned issues of the state-of-the-art distributed RR methods, we propose in this work two novel decentralized RR algorithms, termed Gradient Tracking with Random Reshuffling (GT-RR), and Exact Diffusion with Random Reshuffling (ED-RR), to enhance the convergence guarantee and extend the scope of decentralized RR methods. 
    Both GT-RR and ED-RR impose additional algorithmic structures compared to D-RR, which precludes the direct adoption of the analysis for D-RR. Therefore, deriving improved theoretical guarantees for GT-RR and ED-RR requires innovative approaches as well as more involved analysis. Specifically, there are two immediate technical challenges to be addressed. Considering a random permutation $\pi^i := \crki{\pi^i_0, \pi^i_1, \ldots, \pi^i_{m-1}}$ of $\crki{1, 2, \ldots, m}$, the first challenge entails managing the difference of two successive stochastic gradients, $[\nabla \fp{i}{\ell + 1}(x_{i, t}^{\ell + 1}) - \nabla \fp{i}{\ell}(\xitl)]$, which arises from the structure of both proposed algorithms. Existing solutions rely heavily on the unbiasedness of stochastic gradients, which do not apply to RR type methods. The second challenge involves how to avoid the accumulation of gradient tracking errors under different permutations across various epochs when the gradient tracking technique is combined with RR. Both challenges arise from the unique properties of RR and contribute to the complexity of analyzing the proposed algorithms. To address them, the first idea is to consider a transformed and unified form of the algorithms, inspired by the work in \cite{alghunaim2021unified}, to avoid the two successive stochastic gradients and utilize the structure of RR updates. In addition, more involved analysis is performed, including considering epoch-wise errors and constructing new Lyapunov functions to build the key inequalities. More detailed discussions are deferred to Subsection \ref{subsec:form} and Section \ref{sec:preliminary}. 

    This paper shows that GT-RR and ED-RR inherit favorable characteristics of centralized RR method and deliver superior numerical and theoretical performance compared to the state-of-the-art decentralized methods. Specifically, our results indicate that GT-RR and ED-RR achieve the same convergence rate as centralized RR method (up to coefficients depending on the mixing matrix) for minimizing smooth nonconvex objective functions. These methods attain enhanced theoretical guarantees in terms of the sample size $m$ and the spectral gap $(1-\lambda)$ when compared to D-RR. In addition, we show that GT-RR and ED-RR enjoy improved convergence rate in terms of $(1-\lambda)$ under both constant and decreasing stepsizes compared to D-RR when the objective functions further satisfy the Polyak-{\L}ojasiewicz (PL) condition.\footnote{It is noteworthy that strongly convex functions satisfy the PL condition \cite{karimi2016linear}.}

    \subsection{Related Works}


    Distributed stochastic gradient methods are widely adopted to tackle large-scale machine learning problems where querying noisy or stochastic gradients is more practical than obtaining full gradients. The work presented in \cite{lian2017can} first demonstrated that distributed stochastic gradient descent (DSGD) can match the performance of centralized SGD method when minimizing smooth nonconvex objective functions. However, DSGD has been criticized for its inability to converge to the exact solution under a constant stepsize even when the full gradients are accessible. Several subsequent works, including gradient tracking based methods \cite{pu2021distributed,xin2021improved,lu2021optimal,koloskova2021improved,alghunaim2023enhanced,song2021optimal} and primal-dual like methods \cite{yuan2020influence,yuan2021removing,huang2021improving,tang2018d}, have proposed solutions to address such an issue. These methods are popular particularly because of the ability to relieve from data heterogeneity \cite{huang2023distributed,huang2022tackling}. Nevertheless, the works rely on the assumption of unbiased stochastic gradients and cannot be directly generalized to RR variants. 

    Random reshuffling (RR) strategy is commonly used in practice for solving large-scale machine learning problems despite the biased stochastic gradients \cite{ying2018stochastic}. Several works have established that RR outperforms unshuffled SGD, both theoretically and empirically, as demonstrated in \cite{gurbu2019,haochen2019,nguyen2020unified,mishchenko2020random,bottou2009curiously,bottou2012stochastic,lix2021convergence}. Specifically, RR achieves a convergence rate of $\orderi{1/(mT^2)}$ regarding the final iterate \cite{mishchenko2020random} when the objective function is smooth and strongly convex. Note that such a result is sharp in terms of $m$ and $T$ \cite{yun2022minibatch,cha2023tighter}. For smooth nonconvex objective functions, RR achieves an $\orderi{1/(m^{1/3}T^{2/3})}$ convergence rate concerning the minimum (or average) of the expected squared norm of the gradients \cite{mishchenko2020random,nguyen2020unified}. Both rates are superior to those of SGD, which behaves as $\orderi{1/(mnT)}$ and $\orderi{1/\sqrt{mnT}}$, respectively, for large $T$.


    Several recent works have extended RR to the decentralized setting \cite{yuan2018variance,jiang2022distributed,huang2023drr}. The work in \cite{yuan2018variance} combines Exact Diffusion \cite{yuan2018exact}, RR, and variance reduction to achieve linear convergence for smooth strongly convex objective functions. The paper \cite{jiang2022distributed} applies RR to a structured convex problem, and the proposed algorithm converges to a neighborhood of the optimal solution at a sublinear rate in expectation.
    The algorithm D-RR considered in \cite{huang2023drr}, which invokes the RR update for each agent, enjoys the superiority of RR over distributed unshuffled SGD methods under certain scenarios for both smooth nonconvex and strongly convex objective functions.

    It is worth noting that there exist other approaches that improve upon the theoretical guarantees of SGD for minimizing finite sum objective functions. For instance, several (distributed) variance reduction (VR)-based methods have been proposed to achieve exact linear convergence rate for minimizing smooth objective functions satisfying the PL condition or strong convexity, including \cite{reddi2016fast,xin2021fast,xin2021hybrid}.

\subsection{Main Contribution}

    The main contribution of this paper is {four}-fold. 
    
    Firstly, we propose two new algorithms, Gradient Tracking with Random Reshuffling (GT-RR, Algorithm \ref{alg:GT-RR}) and Exact Diffusion with Random Reshuffling (ED-RR, Algorithm \ref{alg:ED-RR}), to solve Problem \eqref{eq:P_RR} over networked agents. The proposed algorithms both inherit favorable characteristics of centralized RR (CRR, Algorithm \ref{alg:CRR}), which serves as the benchmark for comparison.
    Specifically, we demonstrate that both algorithms can drive the minimum expected squared norm of the gradient to zero, at a rate of $\orderi{1/[(1-\lambda)^{1/3}m^{1/3}T^{2/3}]}$ for minimizing general nonconvex objective functions, which is comparable to the results for centralized RR \cite{mishchenko2020random} in terms of the epoch counter $T$ and the sample size $m$. Furthermore, if the objective function satisfies the PL condition, the proposed algorithms can achieve a convergence rate of $\orderi{1/[m(1-\lambda)T^2]}$ using decreasing stepsizes in terms of the averaged expected function values to the global minimum. Alternatively, with a constant stepsize $\alpha$, the error decreases exponentially fast to a neighborhood of zero with a size of order $\orderi{m\alpha^2/(1-\lambda)}$. Both results are also comparable to that of centralized RR up to constants related to the mixing matrix. 
    Notably, GT-RR and ED-RR share similar procedures with DSGT and ED, respectively, but achieve enhanced convergence results when higher accuracy is demanded.

    Secondly, GT-RR and ED-RR enjoy better theoretical guarantees compared to the state-of-the-art method D-RR in \cite{huang2023drr}. Specifically, both methods attain enhanced theoretical guarantees in terms of the sample size $m$ and the spectral gap $(1-\lambda)$ for minimizing smooth nonconvex objective functions. In addition, GT-RR and ED-RR enjoy improved convergence rate in terms of the spectral gap $(1-\lambda)$ under both constant and decreasing stepsizes compared to D-RR when the objective functions further satisfy the Polyak-{\L}ojasiewicz (PL) condition. Comparisons between the performance of GT-RR, ED-RR and the other related methods under general nonconvex objective functions and functions satisfying the PL condition are presented in Table \ref{tab:summary_ncvx} and Table \ref{tab:summary_PL}, respectively.

    Compared to (distributed) variance reduction (VR)-based methods, GT-RR and ED-RR are easy to implement and do not require additional conditions such as large batch sizes in \cite{pan2020d}, fine-tuning of hyperparameters in \cite{xin2021hybrid}, large storage cost in \cite{reddi2016fast,xin2021improved}, and unbalanced computation of SVRG-type procedures in \cite{sun2020improving}.

    {Thirdly}, the analysis for the two proposed algorithms is based on a general framework. Such a framework can be applied to derive the convergence results for a wide range of distributed algorithms equipped with RR technique. Notably, such a framework simplifies the previous one in \cite{alghunaim2021unified} and comes with novel analysis that better fits the nature of RR updates.
    It is worth noting that the improvements enjoyed by the new algorithms not only come from the designed algorithmic structure but also result from the improved analysis.
    By using the refined intermediate results presented in this work, we are able to provide a better convergence result for D-RR when the objective function is smooth and nonconvex. Such a result demonstrates the non-trivial extension of the analysis techniques from the earlier work \cite{huang2023drr}. 
    
    Finally, we provide numerical experiments that corroborate the theoretical improvements of the new algorithms. In particular, we show that under different scenarios, GT-RR and ED-RR respectively achieve the best numerical performance compared to the other choices.

    \begin{table}[htbp]
        \centering 
        \setlength{\tabcolsep}{2pt}
        \begin{threeparttable}
        \begin{tabular}{@{}cc@{}}
        \toprule
        Algorithm                  & Convergence Rate                                                                                                                      \\ \midrule
        CRR & $\order{\frac{1}{\boldsymbol{m^{1/3}T^{2/3}}}}$ \cite{mishchenko2020random}\\ \midrule
        DSGD                       & $\order{\frac{1}{\sqrt{mnT}}+\frac{n^2}{(1-\lambda)^2mT}}$ \cite{lian2017can}                                                         \\
        D-RR                       & $\order{\frac{1}{(1-\lambda) T^{{2}/{3}}}}$  \cite{huang2023drr} \tnote{(a)}                                                             \\ \midrule
        \makecell[c]{D-RR \\ (\textbf{This work})}  & $\order{\frac{1}{\boldsymbol{(1-\lambda)^{2/3} m^{1/3} T^{2/3}} } + \frac{1}{(1-\lambda) T}}$                                         \\ \midrule
        DSGT                       & $\order{\frac{1}{\sqrt{mnT}}+\frac{n}{(1-\lambda)mT}+\frac{n}{(1-\lambda)^4m^2T^2}}$ \cite{alghunaim2021unified}                      \\
        \makecell[c]{GT-RR \\ (\textbf{This work})} & $\order{\frac{1}{\boldsymbol{(1-\lambda)^{1/3}m^{1/3} T^{2/3}}} + \frac{1}{(1-\lambda)T}+\frac{1}{m^{\frac{7}{3}}(1-\lambda)^{\frac{7}{3}}T^{\frac{5}{3}}} }$ 
        \\ \midrule
        ED/ $D^2$                  & $\order{\frac{1}{\sqrt{mnT}} + \frac{n}{(1-\lambda)mT} + \frac{n}{(1-\lambda)^2 m^2 T^2} }$ \cite{alghunaim2021unified}               \\
        \makecell[c]{ED-RR \\ (\textbf{This work})} & $\order{\frac{1}{\boldsymbol{(1-\lambda)^{1/3}m^{1/3} T^{2/3}}} + \frac{1}{(1-\lambda)T}+\frac{1}{m^{\frac{7}{3}}(1-\lambda)^{\frac{4}{3}}T^{\frac{5}{3}}} }$ 
        \\ \bottomrule
        \end{tabular}
        \begin{tablenotes}
            \item[(a)] The result is obtained by minimizing over the arbitrary constant $\eta$ in the original result $\orderi{1 / (\eta T^{2/3}) + \eta^2/[(1-\lambda)^3 T^{2/3}]}$ in \cite{huang2023drr}.
          \end{tablenotes}
        \end{threeparttable}
        \caption{A summary of related the theoretical results under smooth nonconvex objective functions using a constant stepsize.}
        \label{tab:summary_ncvx}
    \end{table}

    \begin{table}[htbp]
        \centering
        \setlength{\tabcolsep}{1pt}
        \begin{threeparttable}
        \begin{tabular}{@{}ccc@{}}
            \toprule
            Algorithm                                   & \makecell[c]{Final Error Bound \\ (Constant Stepsize $\alpha$)}                                                                                  & \makecell[c]{Convergence Rate\\ (Decreasing Stepsize)}                                                  \\ \midrule
            CRR & $\order{\boldsymbol{m\alpha^2}}$ \cite{mishchenko2020random,nguyen2020unified} & $\order{\frac{\log(T)}{\boldsymbol{m T^2}}}$\cite{nguyen2020unified} \\ \midrule
            DSGD                                        & $\order{\frac{\alpha}{n} + \frac{\alpha^2}{(1-\lambda)^2}}$ \cite{koloskova2020unified} \tnote{(b)}                    & \makecell[l]{$\order{\frac{1}{mnT} + \frac{1}{(1-\lambda)^2 m^2 T^2}}$\\ \cite{pu2021sharp} \tnote{(b)}}         \\
            D-RR \cite{huang2023drr}            & $\order{\frac{m\alpha^2}{(1-\lambda)^3} }$ \tnote{(b)}                                                                 & $\order{\frac{1}{(1-\lambda)^3 mT^2} }$\tnote{(b)}                                               \\ \midrule
            DSGT                                        & \makecell[l]{$\cO(\frac{\alpha}{n} + \frac{\alpha^2 }{1-\lambda}+ \frac{\alpha^4}{n(1-\lambda)^4})$\\ \cite{alghunaim2021unified}} & \makecell[l]{$\order{\frac{1}{mn T} +  \frac{1}{(1-\lambda)^3 m^2T^2} }$\\ \cite{pu2021distributed} \tnote{(b)}} \\
            \makecell[c]{GT-RR \\ (\textbf{This work})} & $\order{\frac{\boldsymbol{m\alpha^2}}{\boldsymbol{1-\lambda}} + \frac{m^4\alpha^4}{(1-\lambda)^2}}$                                & $\order{\frac{1}{\boldsymbol{(1-\lambda)m T^2}} + \frac{1}{(1-\lambda)^2 T^4} }$                  \\ \midrule
            ED/ $D^2$                                   & \makecell[l]{$\order{\frac{\alpha}{n} + \frac{\alpha^2}{(1-\lambda)} + \frac{\alpha^4}{n(1-\lambda)^3}}$\\ \cite{alghunaim2021unified}} & \makecell[l]{$\order{\frac{1}{mnT} + \frac{1}{(1-\lambda) m^2 T^2}}$\\ \cite{huang2021improving} \tnote{(b)}}    \\
            \makecell[c]{ED-RR \\ (\textbf{This work})} & $\order{\frac{\boldsymbol{m\alpha^2}}{\boldsymbol{1-\lambda}} + \frac{m^4\alpha^4}{(1-\lambda)^2}}$                                & $\order{\frac{1}{\boldsymbol{(1-\lambda)m T^2}} + \frac{1}{(1-\lambda)^2 T^4} }$                  \\ \bottomrule
        \end{tabular}
        \begin{tablenotes}
            \item[(b)] The results are obtained for smooth strongly convex objective functions.
          \end{tablenotes}
        \end{threeparttable}
        \caption{A summary of the related theoretical results under smooth objective functions satisfying the PL condition. 
        }
        \label{tab:summary_PL}
    \end{table}

\subsection{Notation and Assumptions}
\label{subsec:notations}	
	We consider column vectors throughout this paper unless specified otherwise. We use $x_{i, t}^{\ell} \in \R^p$ to describe the iterate of agent $i$ at the $t$-th epoch during the $\ell$-th inner loop. For the sake of clarity and presentation, we define stacked variables as follows:
    \begin{align*}
        &\x_t^\ell=
        \begin{pmatrix}  
          {x_{1,t}^{\ell}},{x_{2,t}^{\ell}},\ldots, {x_{n,t}^{\ell}}
        \end{pmatrix}^{\T} \in \R^{n\times p},\\
        &\nabla F(\x_t^\ell)=
        \begin{pmatrix}
          \nabla f_1(x_{1,t}^\ell), \nabla f_2(x_{2,t}^\ell),\ldots, \nabla f_n(x_{n,t}^{\ell})
        \end{pmatrix}^{\T} \in \R^{n\times p}, \\
        &\nabla\Fp{\ell}(\x_t^\ell)=
        \begin{pmatrix}  
          \nabla \fp{1}{\ell}(x_{1,t}^\ell),\ldots,
          \nabla \fp{n}{\ell}(x_{n,t}^\ell) 
        \end{pmatrix}^{\T} \in \R^{n\times p}.
    \end{align*}

    We use $\bar{x}\in\R^p$ to denote the averaged variables (among agents). For instance, the variables $\bar{x}_t^{\ell}:= 1/n\sumn x_{i, t}^{\ell}$ and $\bar{\nabla}\Fp{\ell}(\x_t^{\ell}):= 1/n\sumn \nabla \fp{i}{\ell}(\xitl)$ denote the average of all the agents' iterates and shuffled gradients at the $t-$th epoch during the $\ell-$th inner loop, respectively. 
	We use $\norm{\cdot}$ to denote the Frobenius norm for a matrix and the $\ell_2$ norm for a vector. The term $\inpro{a, b}$ stands for the inner product of two vectors $a, b\in\R^{p}$. For two matrices $A, B\in\R^{n\times p}$, $\inpro{A, B}$ is defined as $\inpro{A, B} := \sum_{i=1}^n\inpro{A_i, B_i}$, where $A_i$ (and $B_i$) represents the $i-$row of $A$ (and $B$).

    Regarding the underlying assumptions, first we consider the standard assumption in the distributed optimization literature on the communication network. Specifically, we assume the agents in the network are connected via a graph $\mathcal{G}$, where $\mathcal{G}=(\mathcal{N},\mathcal{E})$ with $\mathcal{N}=\crki{1,2,...,n}$ representing the set of agents and $\mathcal{E}\subseteq \mathcal{N} \times \mathcal{N}$ representing the set of edges connecting the agents. In particular, $(i,i)\in\mathcal{E}$ for all $i\in\mathcal{N}$. The set of neighbors for agent $i$ is denoted by $\mathcal{N}_i=\{j\in \mathcal{N}:(i,j)\in \mathcal{E}\}$.
    The element $w_{ij}$ in the weight matrix $W\in\mathbb{R}^{n\times n}$ represents the weight of the edge between agents $i$ and $j$.
        \begin{assumption}
            \label{as:graph}
            The graph $\mathcal{G}$ is undirected and strongly connected, i.e., there exists a path between any two nodes in $\mathcal{G}$. There is a direct link between $i$ and $j$ $(i\neq j)$ in $\mathcal{G}$ if and only if $w_{ij}>0$ and $w_{ji}>0$; otherwise, $w_{ij}=w_{ji}=0$. The mixing matrix is nonnegative, symmetric, and stochastic, i.e., $W=W^{\T}$ and $W\1=\1$.
        \end{assumption} 

        Assumption \ref{as:graph} guarantees the spectral norm $\lambda$ of the matrix $(W - \1\1^{\T}/n)$ is strictly less than one \cite[Lemma 1]{huang2023drr}. As a consequence, we measure the impact of the network topology through the spectral gap $(1-\lambda)$, where a smaller spectral gap generally indicates a worse connectivity of the corresponding graph. 

        Assumption \ref{as:comp_fun} specifies the requirements for the objective functions, which include smoothness and lower boundedness.
        \begin{assumption}
            \label{as:comp_fun}	
            Each $f_{i,\ell}:\R^p\to\R$ is $L$-smooth and bounded from below, i.e., for all $x,x^\prime \in \R^p$ and $i,\ell$, we have $\normi{\nabla f_{i,\ell}(x)- \nabla f_{i,\ell}(x^\prime)} \leq L\normi{x-x^\prime}$.
        \end{assumption}

        In addition to the general nonconvex assumption above, we will consider a specific nonconvex condition known as the Polyak-{\L}ojasiewicz (PL) condition in Assumption \ref{as:PL}. Overparameterized models often satisfy this condition. Notably, the strong convexity condition implies the PL condition \cite{karimi2016linear}.
        
        \begin{assumption}
            \label{as:PL}
            There exist $\mu>0$, such that the aggregate function $f(x)=\frac{1}{n}\sum_{i=1}^n f_i(x)$ satisfies $2\mu\prti{f(x)-f^*} \leq \normi{\nabla f(x)}^2$, 
            for all $x \in \R^p$, where $f^*:= \inf_{x\in\R^p}f(x)$.
        \end{assumption}
	
	\subsection{Organization}

        The rest of this paper is organized as follows. In Section \ref{sec:algorithm}, we introduce the standing assumptions and the two new algorithms. 
        We then proceed to conduct a preliminary analysis in Section \ref{sec:preliminary}, followed by the presentation of the main results in Section \ref{sec:main}. Numerical simulations are provided in Section \ref{sec:sims}, and we conclude the paper in Section \ref{sec:conclusions}. 

        \section{Two Distributed Random Reshuffling Algorithms}
        \label{sec:algorithm}

        In this section, we introduce two algorithms, Gradient Tracking with Random Reshuffling (GT-RR) and Exact Diffusion with Random Reshuffling (ED-RR), with the goal of minimizing the impact of the network topology on the convergence rate compared to the existing algorithms, while achieving comparable performance as centralized RR method.
        Note that the algorithmic structures of GT-RR and ED-RR prevent us from adopting the analysis in \cite{huang2023drr} and those for distributed unshuffled stochastic gradient methods, such as \cite{pu2021distributed,huang2021improving,yuan2020influence}. One of the challenges arises from analyzing consecutive shuffled gradients in the form of $[\nabla \fp{i}{\ell + 1}(x_{i, t}^{\ell + 1}) - \nabla \fp{i}{\ell}(\xitl)]$ given a random permutation $\pi^i := \crki{\pi^i_0, \pi^i_1, \ldots, \pi^i_{m-1}}$ of $\crki{1, 2, \ldots, m}$. To tackle this challenge, we adopt a unified framework inspired by recent works including \cite{xu2021distributed,alghunaim2021unified,yuan2021decentralized}
        and develop new analysis strategies that utilize the properties of RR updates. Specifically, the recursions of the whole epochs are considered, and new Lyappunov functions are constructed to build the key inequalities. Details are given in Section \ref{sec:preliminary}.

	\subsection{Algorithms}
	\label{subsec:alg}
        
        We introduce the two algorithms to solve Problem \eqref{eq:P_RR}, Gradient Tracking with Random Reshuffling (GT-RR) in Algorithm \ref{alg:GT-RR} and Exact Diffusion with Random Reshuffling (ED-RR) in Algorithm \ref{alg:ED-RR}.\footnote{Note that the algorithms can be generalized to the case where each agent has a different number of data points or mini-batches $m_i$. Each agent simply follows its own sampling cycle. After introducing $m':=\text{Least Common Multiple}(m_1,m_2,\ldots,m_n)$, we can replace $m$ with $m'$ in the analysis.} Both algorithms employ RR updates for each agent and can be considered as extensions of distributed stochastic gradient tracking (DSGT) \cite{pu2021distributed} and Exact Diffusion (ED)/$D^2$/Exact Diffusion with adaptive step sizes (EDAS) \cite{huang2021improving,yuan2020influence,tang2018d} to incorporate RR updates, respectively. 

        	\begin{algorithm}[htbp]
		\begin{algorithmic}[1]
			\STATE Initialize $x_{i,0}^0$ for each agent $i\in\mathcal{N}$, determine $W = [w_{ij}]\in\R^{n\times n}$, stepsize $\{\alpha_t\}$.
			\FOR{Epoch $t = 0, 1, 2,\ldots, T-1$}
			\FOR{Agent $i$ in parallel}
			\STATE Independently sample a random permutation $\crk{\pi_{0}^i, \pi_{1}^i,\ldots, \pi_{m-1}^i}$ of $\crk{1,2,\ldots, m}$.
			\STATE Set $y_{i, t}^0 = \nabla \fp{i}{0}(x_{i,t}^0)$.
			\FOR{$\ell = 0, 1,\ldots, m-1$}
			\STATE Update $x_{i,t}^{\ell + \frac{1}{2}} = x_{i, t}^{\ell} - \alpha_t y_{i, t}^{\ell}$ and send $x_{i,t}^{\ell + \frac{1}{2}}$ to its neighbors $j\in\mathcal{N}_i$.
			\STATE Receive $x_{j, t}^{\ell + \frac{1}{2}}$ from its neighbors $j\in\mathcal{N}_i$ and update $x_{i, t}^{\ell + 1} = \sum_{j\in\mathcal{N}_i}w_{ij}x_{j,t}^{\ell + \frac{1}{2}}$. \label{line:gt-rr_combine_x}
			\STATE Send $y_{i, t}^{\ell}$ to its neighbors $j\in\mathcal{N}_i$.
			\STATE Receive $y_{j, t}^{\ell}$ from its neighbors and update for $\ell\neq m-1.$ Update
			$
			y_{i, t}^{\ell + 1} = \sum_{j\in\mathcal{N}_i} w_{ij}y_{j, t}^{\ell} + \nabla \fp{i}{\ell + 1}(x_{i, t}^{\ell + 1}) - \nabla \fp{i}{\ell}(x_{i, t}^{\ell}).
			$\label{line:gt-rr_y}
			\ENDFOR
			\STATE $x_{i, t+ 1}^0 = x_{i, t}^m$.
			\ENDFOR
			\ENDFOR
			\STATE Output $x_{i, T}$.
		\end{algorithmic}
		\caption{Gradient Tracking with Random Reshuffling (GT-RR)}
		\label{alg:GT-RR}
	\end{algorithm}

        

	\begin{algorithm}[htbp]
		\begin{algorithmic}[1]
			\STATE Initialize $x_{i,0}^0$ for each agent $i\in\mathcal{N}$, determine $W = [w_{ij}]\in\R^{n\times n}$, stepsize $\{\alpha_t\}$.
			\FOR{Epoch $t = 0, 1, 2,\ldots, T-1$}
			\FOR{Agent $i$ in parallel}
			\STATE Independently sample a random permutation $\crk{\pi_{0}^i, \pi_{1}^i,\ldots, \pi_{m-1}^i}$ of $\crk{1,2,\ldots, m}$.
			\FOR{$\ell = 0, 1,\ldots, m-1$}
			\IF{$\ell = 0$}
			\STATE Update $x_{i,t}^{\ell + \frac{1}{2}} = x_{i,t}^\ell - \alpha_t \nabla \fp{i}{\ell}(x_{i,t}^\ell)$.
			\ELSE 
			\STATE Update $x_{i,t}^{\ell + \frac{1}{2}} = 2x_{i,t}^\ell - x_{i,t}^{\ell - 1} - \alpha_t(\nabla \fp{i}{\ell}(\xitl) - \nabla \fp{i}{\ell-1}(x_{i,t}^{\ell - 1}))$. \label{line:ed-rr_x}
			\ENDIF
			\STATE Send $x_{i,t}^{\ell + \frac{1}{2}}$ to its neighbors $j\in\cN_i$. 
			\STATE Receive $x_{j,t}^{\ell + \frac{1}{2}}$ from its neighbors $j\in\cN_i$ and update $x_{i,t}^{\ell + 1} = \sum_{j\in\cN_i}w_{ij}x_{j,t}^{\ell + \frac{1}{2}}$.
			\ENDFOR
			\STATE $x_{i, t+ 1}^0 = x_{i, t}^m$.
			\ENDFOR
			\ENDFOR
			\STATE Output $x_{i, T}$.
		\end{algorithmic}
		\caption{Exact Diffusion with Random Reshuffling (ED-RR)}
		\label{alg:ED-RR}
	\end{algorithm}
 
 For both algorithms, each agent $i$ first generates a random permutation $\crki{\pi_0^i, \pi_1^i,\ldots, \pi_{m-1}^i}$ of $[m]$ and then performs $m$ distributed gradient steps correspondingly.
        In contrast to the unshuffled distributed stochastic gradient methods, each agent in GT-RR and ED-RR has guaranteed access to its full local data in every epoch. Specifically, in GT-RR, agent $i$ first performs an approximate stochastic gradient descent step with a gradient tracker $y_{i,t}^\ell$ initialized at $y_{i,t}^0 = \nabla \fp{i}{0}(x_{i,t}^0)$. Then the intermediate result is exchanged with neighbors and combined in line \ref{line:gt-rr_combine_x} of Algorithm \ref{alg:GT-RR}. The gradient tracker $y_{i,t}^\ell$ is also mixed and updated as in Line \ref{line:gt-rr_y}. In ED-RR, agent $i$ performs similar procedures but with a different local update scheme in Line \ref{line:ed-rr_x} of Algorithm \ref{alg:ED-RR}. Such a step can be viewed as a combination of a local gradient step $\psi_{i,t}^{\ell + 1} = x_{i,t}^\ell - \alpha_t \nabla \fp{i}{\ell}(\xitl)$ and a correction step $x_{i,t}^{\ell + \frac{1}{2}} = x_{i,t}^\ell + \psi_{i,t}^{\ell + 1} - \psi_{i,t}^{\ell}$. 
        ED-RR saves one communication step per inner update compared to GT-RR. It is worth noting that in contrast to the approach presented in \cite{yuan2018variance}, ED-RR achieves the convergence rate of $\orderi{1/(mT^2)}$ for minimizing smooth objective functions satisfying the PL condition without necessitating additional variance reduction procedures.

        When compared with centralized RR methods, the rationale behind the effectiveness of GT-RR and ED-RR is similar to that of D-RR \cite{huang2023drr}. These algorithms can be regarded as approximate implementations of the centralized RR method for solving an equivalent form of Problem \eqref{eq:P_RR} considering relation \eqref{eq:avg} below
        \begin{align}
            \label{eq:avg}
            \bx{t}{\ell + 1} = \bx{t}{\ell} - \frac{\alpha_t}{n}\sumn\nabla \fp{i}{\ell}(\xitl).
        \end{align}
        In GT-RR, with initialization $y_{i,t}^0 = \nabla \fp{i}{0}(x_{i,t}^0)$ at the beginning of each epoch, the so-called gradient tracking property is maintained, i.e., $\bar{y}_t^{\ell} = 1/n\sumn \nabla \fp{i}{\ell}(x_{i,t}^{\ell})$. This enables GT-RR to satisfy relation \eqref{eq:avg}.

        To see why relation \eqref{eq:avg} holds for ED-RR, note that ED-RR is equivalent to the following primal-dual like update by considering the matrix $(I-W)^{1/2}$ and introducing auxiliary variables $d_{i,t}^{\ell}$ for all $i\in[n]$ (similar to the arguments provided in \cite{huang2021improving,yuan2018exact}): 
        \begin{subequations}
            \label{eq:ED-RR}
            \begin{align}
                \x_t^{\ell + 1} &= W(\x_t^\ell - \alpha_t \nabla \Fp{\ell}(\x_t^\ell)) - (I-W)^{\frac{1}{2}} \d_t^{\ell}\\
                \d_t^{\ell + 1} &= \d_t^{\ell} + (I-W)^{\frac{1}{2}} \x_t^{\ell + 1},
            \end{align}
        \end{subequations}
        where we initialize $d_{i,0}^0 =\0$, $\forall i$. 
        Such an equivalent update readily implies relation \eqref{eq:avg} for ED-RR.


    
    In the remainder of this section, we present a unified framework for studying the convergence properties of both GT-RR and ED-RR.

    \subsection{A Unified Form for Analysis}
	\label{subsec:form}
    
    The update rules used by GT-RR and ED-RR in each epoch are similar to those of some unshuffled decentralized stochastic gradient methods. Such an observation motivates us to utilize a unified framework inspired by those proposed for distributed gradient methods \cite{yuan2021decentralized,alghunaim2021unified,xu2021distributed}. The unified framework helps avoid handling consecutive shuffled gradients, as outlined in \eqref{eq:abc_epoch}, and enables us to conduct a joint analysis for GT-RR and ED-RR.
    
    Specifically, we unify GT-RR and ED-RR by first introducing the matrices $A$, $B$, and $C\in\R^{n\times n}$ satisfying Assumption \ref{as:abc}, which is a mild condition that holds true for many distributed gradient methods. Similar assumptions appear in \cite{alghunaim2021unified}.
    \begin{assumption}
        \label{as:abc}
        The matrices $A, B^2,C\in\R^{n\times n}$ are chosen as a polynomial function of $W$: $ A=\sum_{d=0}^{p}a_d W^d$, $B^2=\sum_{d=0}^{p}b_d W^d$, $C=\sum_{d=0}^{p}c_d W^d$, 
            where $p\geq 0$ is an integer. Moreover, the constants $\{a_d,b_d,c_d\}_{d=0}^p$ are chosen such that $A$ and $C$ are doubly stochastic and the matrix $B$ satisfies $B\x=\0$ if and only if $x_1=x_2=\ldots=x_n$.
    \end{assumption}
    
    Based on Assumption \ref{as:abc}, we consider update \eqref{eq:abc_epoch} below which can be regarded as a unified algorithmic framework for distributed random reshuffling methods over networks allowing for different choices of matrices $A$, $B$, $C$,
    \begin{equation}
        \label{eq:abc_epoch}
        \begin{aligned}
            \x_t^{\ell + 1} &= A(C\x_t^\ell - \alpha_t \nabla \Fp{\ell}(\x_t^\ell)) - B\z_t^\ell,\\
            \z_t^{\ell + 1} &= \z_t^\ell + B\x_t^{\ell + 1},\quad \ell = 0,1,\ldots, m-1,\\
            \x_{t+1}^0 &= \x_{t}^m,
        \end{aligned}
    \end{equation}
     with proper initialization $\z_t^0= h(\x_t^0)$ ($t=0,1,\ldots$) for some mapping $h:\R^{n\times p}\rightarrow \R^{n\times p}$.
In particular, the updates in the $t$-th epoch of GT-RR and ED-RR can both be written as special cases of \eqref{eq:abc_epoch}: letting $A = W$, $B = I-W$, and $C=W$ in \eqref{eq:abc_epoch} recovers the update of GT-RR; similarly, choosing $A = W$, $B = (I-W)^{1/2}$, and $C = I$ recovers the update of ED-RR. Such argument is formalized in Lemma \ref{lem:abc}, which introduces a transformed form of \eqref{eq:abc_epoch} for the ease of the follow-up analysis. Similar ideas appear in \cite{alghunaim2021unified,huang2021improving}.


    \begin{lemma}
        \label{lem:abc}
        Let Assumptions \ref{as:graph} and \ref{as:abc} hold. Then both GT-RR and ED-RR can be written as 
        \begin{equation}
            \label{eq:abc}
            \begin{aligned}
                \x_t^{\ell + 1} &= (AC - B^2)\x_t^\ell - \alpha_t A(\nabla \Fp{\ell}(\x_t^\ell) - \nabla F(\Bxt{0})) - \s_t^\ell,\\
                \s_t^{\ell + 1} &= \s_t^\ell + B^2\x_t^\ell,\quad \ell = 0,1,\ldots, m-1,
            \end{aligned}
        \end{equation}
        where $\s_t^\ell := B(\z_t^\ell - B\x_t^\ell) + \alpha_t A\nabla F(\Bxt{0})$ for epoch $t = 0, 1, 2,\ldots, T-1$.
        In particular, letting $A = W$, $B = I-W$, $C= W$, and $\z_t^0 = -W\x_t^0$
        recovers the update of GT-RR. 
        Letting $A = W$, $B = (I-W)^{1/2}$, $C = I$, and $\z_t^0 = \0$
        recovers the update of ED-RR.
    \end{lemma}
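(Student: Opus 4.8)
The plan is to argue in two parts: first, the purely algebraic fact that, with $\s_t^\ell$ defined as in the statement, the abstract recursion~\eqref{eq:abc_epoch} is equivalent to~\eqref{eq:abc}; and second, that GT-RR and ED-RR are instances of~\eqref{eq:abc_epoch} for the stated matrices $A,B,C$ and the stated $\z_t^0$. The first part is common to both algorithms and is what produces the transformed form, so I would carry it out first; the second part then only requires matching each algorithm's update to~\eqref{eq:abc_epoch} and checking that the corresponding triples satisfy \cref{as:abc}.

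For the first part, start from the definition $\s_t^\ell := B(\z_t^\ell - B\x_t^\ell) + \alpha_t A \nabla F(\Bxt{0})$, i.e.\ $B\z_t^\ell = \s_t^\ell + B^2\x_t^\ell - \alpha_t A\nabla F(\Bxt{0})$. Substituting this into the $\x$-update $\x_t^{\ell+1} = A(C\x_t^\ell - \alpha_t\nabla\Fp{\ell}(\x_t^\ell)) - B\z_t^\ell$ of~\eqref{eq:abc_epoch} and regrouping gives exactly the first line of~\eqref{eq:abc}. For the second line, apply $B$ to $\z_t^{\ell+1} = \z_t^\ell + B\x_t^{\ell+1}$ to get $B\z_t^{\ell+1} = B\z_t^\ell + B^2\x_t^{\ell+1}$, hence $\s_t^{\ell+1} = B\z_t^\ell + \alpha_t A\nabla F(\Bxt{0})$; subtracting the identity $\s_t^\ell = B\z_t^\ell + \alpha_t A\nabla F(\Bxt{0}) - B^2\x_t^\ell$ yields $\s_t^{\ell+1} = \s_t^\ell + B^2\x_t^\ell$. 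The one point to watch here is that $\nabla F(\Bxt{0})$ depends on $\x_t^0$ only and is therefore constant in the inner index $\ell$ within epoch $t$, so it cancels cleanly in $\s_t^{\ell+1} - \s_t^\ell$; this is precisely why the centering is by $\nabla F(\Bxt{0})$ rather than by a step-dependent quantity.

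For the second part, ED-RR is immediate: the primal--dual reformulation~\eqref{eq:ED-RR} — with the auxiliary variable reset to $\0$ at the start of each epoch, consistent with the $\ell=0$ branch of \cref{alg:ED-RR} — is literally~\eqref{eq:abc_epoch} with $A=W$, $B=(I-W)^{1/2}$, $C=I$, $\z_t^\ell=\d_t^\ell$, and $\z_t^0=\0$. For GT-RR, I would eliminate the tracking variable: using $\x_t^{\ell+1} = W(\x_t^\ell - \alpha_t\y_t^\ell)$ together with the recursion of Line~\ref{line:gt-rr_y}, multiply the $\y$-recursion by $W$ and substitute $\alpha_t W\y_t^\ell = W\x_t^\ell - \x_t^{\ell+1}$ to obtain the second-order recursion $\x_t^{\ell+1} = 2W\x_t^\ell - W^2\x_t^{\ell-1} - \alpha_t W(\nabla\Fp{\ell}(\x_t^\ell) - \nabla\Fp{\ell-1}(\x_t^{\ell-1}))$ for $\ell\ge1$, with first step $\x_t^1 = W\x_t^0 - \alpha_t W\nabla\Fp{0}(\x_t^0)$ coming from the per-epoch initialization $y_{i,t}^0=\nabla\fp{i}{0}(x_{i,t}^0)$; performing the same elimination of $\z_t^\ell$ in~\eqref{eq:abc_epoch} with $A=W$, $B=I-W$, $C=W$, and $\z_t^0=-W\x_t^0$ (and the identity $W^2+I-(I-W)^2=2W$) produces the identical recursion and first step, so by induction on $\ell$ the two updates generate the same sequence $\{\x_t^\ell\}_{\ell=0}^{m}$ in every epoch. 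It then remains to verify \cref{as:abc} for both instances: $A$, $B^2$, $C$ are polynomials in $W$; $A$ and $C$ are doubly stochastic since $W$ is; and $B\x=\0$ iff $x_1=\cdots=x_n$ — the last using the strong connectivity in \cref{as:graph}, which also guarantees $I-W\succeq0$ so that $(I-W)^{1/2}$ is well defined.

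There is no deep obstacle here: the lemma is essentially a change of variables. The two things that need attention are the clean cancellation of the centering term $\nabla F(\Bxt{0})$ in the $\s$-recursion (handled above), and correctly encoding the per-epoch restart of GT-RR and ED-RR through the initialization $\z_t^0=h(\x_t^0)$ applied freshly in each epoch — it is this restart, rather than carrying the tracker/dual state across epochs, that makes both algorithms genuine random-reshuffling methods and lets the follow-up analysis based on~\eqref{eq:abc} apply epoch by epoch. The only mildly involved computation is the elimination of $\y_t^\ell$ (resp.\ $\z_t^\ell$) to reach the common second-order recursion in the GT-RR case.
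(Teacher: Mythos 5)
Your proposal is correct and follows essentially the same route as the paper: eliminate the auxiliary variable ($\y_t^\ell$ for GT-RR, $\d_t^\ell$ for ED-RR) to obtain a second-order recursion in $\x$ alone, match it against the recursion obtained by eliminating $\s_t^\ell$ (equivalently $\z_t^\ell$) from the unified form with the stated choices of $A$, $B$, $C$, and read off the initialization from the first inner step of each epoch. Your explicit first-order verification that the change of variables $\s_t^\ell = B(\z_t^\ell - B\x_t^\ell) + \alpha_t A\nabla F(\Bxt{0})$ turns \eqref{eq:abc_epoch} into \eqref{eq:abc}, with the centering term cancelling because $\nabla F(\Bxt{0})$ is constant within an epoch, is a slightly more complete rendering of the step the paper compresses into ``comparing the update forms of $\x_t^\ell$ in \eqref{eq:abc_epoch} and \eqref{eq:abc} leads to the definition of $\s_t^\ell$,'' and your observation about the per-epoch reset of the dual variable in ED-RR correctly accounts for the $\ell=0$ branch of \cref{alg:ED-RR}.
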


    \begin{proof}
        See Appendix \ref{app:lem_abc}.
    \end{proof}


    \begin{remark}
        The main difference between the unified form \eqref{eq:abc} and \cite[(14)]{alghunaim2021unified} lies in 
        the new definition of $\s_t^\ell$ in \eqref{eq:abc}.  Such a difference is critical as it avoids dealing with an extra error term corresponding to $\alpha^4\normi{\bar{g}_k}^2$ in \cite{alghunaim2021unified} and simplifies the analysis. Here, the term $\bar{g}_k$ represents the averaged stochastic gradients in \cite{alghunaim2021unified}. More details can be found in Lemma \ref{lem:re_epoch} and Lemma \ref{lem:cons_e} in the next section. As a result, we are able to obtain the same theoretical guarantees for GT-RR and ED-RR when the objective functions satisfy the PL condition as illustrated in Corollary \ref{cor:PL_DGT_RR} and Corollary \ref{cor:PL_ED_RR}. By contrast, the results in \cite{alghunaim2021unified} indicate different convergence rates for DSGT and ED. The gap is only eliminated later in \cite{alghunaim2023enhanced,ye2022snap} for strongly convex objective functions.
    \end{remark}    

	\begin{remark}
		Although the D-RR algorithm proposed in \cite{huang2023drr} can be recovered using \eqref{eq:abc} by setting $A=W$, $C=I$, $B=\bzero$, and $\s_t^\ell=\alpha_t W \nabla F(\Bxt{0})$, this choice violates the assumption stated in Assumption \ref{as:abc} which requires that $B\x = \0$ if and only if $x_1=x_2=\ldots=x_n$. Consequently, the subsequent analysis based on the framework \eqref{eq:abc} does not apply to D-RR.
	\end{remark}

    Next, we focus on the transformed recursion \eqref{eq:abc} in Lemma \ref{lem:abc} to conduct the analysis. The key idea is to leverage the eigenvalue decomposition of the symmetric and stochastic matrix $W\in\R^{n\times n}$. Define $\hLambda$ as $\hLambda:= \diag(\lambda_2,\ldots, \lambda_n)$, where $1=\lambda_1>\lambda_2\ge\lambda_3\ge\cdots\ge\lambda_n$ are the eigenvalues of $W$. Then,
    \begin{equation}
        \label{eq:W_decomp}
        W = \begin{pmatrix}
            \frac{1}{\sqrt{n}}\1 & \hU 
        \end{pmatrix}
        \begin{pmatrix}
            1 & 0\\
            0 & \hLambda
        \end{pmatrix}
        \begin{pmatrix}
            \frac{1}{\sqrt{n}}\1^{\T}\\
            \hU^{\T}
        \end{pmatrix},
    \end{equation}
    where $\hU\hU^{\T} = I - \frac{\1\1^{\T}}{n}, \ \hU^{\T}\hU = I_{n-1}.$

    Based on Assumption \ref{as:abc}, the matrices $A$, $B^2$, and $C$ have similar eigenvalue decomposition. This observation leads to Lemma \ref{lem:re} below. Note that a similar approach was adopted in \cite{alghunaim2021unified}.

    \begin{lemma}
		\label{lem:re}
		Let Assumptions \ref{as:graph}, \ref{as:comp_fun}, and \ref{as:abc} hold. 
        We have
		\begin{align*}
             \bar{x}_t^{\ell+1} &= \bar{x}_t^{\ell}-\alpha_t \bar{\nabla} \Fp{\ell}(\x_t^\ell), \\
			\e_t^{\ell + 1} &= \Gamma \e_t^{\ell} - \alpha_t V^{-1} \begin{pmatrix}
				\hLambda_a \hU^{\T}\prt{\nabla \Fp{\ell}(\x_t^{\ell}) - \nabla F(\Bxt{0})}\\
				\0
			\end{pmatrix},
		\end{align*}
		for $\ell = 0,1,\ldots, m-1$ and epoch $t = 0, 1, 2,\ldots, T-1$. Here $\Gamma\in\R^{2(n-1)\times 2(n-1)}$ is given by
        \begin{align*}
            \Gamma := V^{-1}\begin{pmatrix}
                \hLambda_a\hLambda_c - \hLambda_b^2 & - \hLambda_b \\
                \hLambda_b & I_{n-1} \\
            \end{pmatrix}V
        \end{align*}
        for some invertible matrix $V$ and satisfies $\gamma:= \normi{\Gamma} < 1$,
        where $\hat{\Lambda}_a$, $\hat{\Lambda}_b^2$ and $\hat{\Lambda}_c$ are diagonal matrices composed of eigenvalues of $A$, $B^2$, and $C$, respectively: $\hat{\Lambda}_a=\diag(\lambda_{a,1}, \ldots, \lambda_{a,n})$, $\hat{\Lambda}_b^2=\diag(\lambda_{b,1}^2, \ldots, \lambda_{b,n}^2)$, and $\hat{\Lambda}_c=\diag(\lambda_{c,1}, \ldots, \lambda_{c,n}).$

        In addition, $\e_t^\ell\in\R^{2(n-1)\times p}$ is defined as 
        \begin{align*}
                       \e_t^{\ell}:= V^{-1}
            \begin{pmatrix}
            \hU^{\T}\x_t^\ell\\
            \hLambda_b^{-1}\hU^{\T}\s_t^\ell
        \end{pmatrix}.
        \end{align*}
	\end{lemma}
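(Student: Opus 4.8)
### Proof proposal for Lemma \ref{lem:re}

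The plan is to project the transformed recursion \eqref{eq:abc} onto the consensus direction $\1/\sqrt n$ and its orthogonal complement, using the eigenvalue decomposition \eqref{eq:W_decomp} of $W$. First I would left-multiply the $\x$-update in \eqref{eq:abc} by $\1^{\T}/n$. Since $A$ and $C$ are doubly stochastic (by \cref{as:abc}), $\1^{\T}A = \1^{\T}$ and $C\1 = \1$, while $B\1 = \0$ forces $\1^{\T}B^2 = \0$ and $\1^{\T}\s_t^\ell = \1^{\T}(B(\z_t^\ell - B\x_t^\ell) + \alpha_t A \nabla F(\Bxt{0})) = \alpha_t \1^{\T}\nabla F(\1(\bx{t}{0})^{\T})$. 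A short computation then collapses the averaged update to $\bar x_t^{\ell+1} = \bar x_t^\ell - \alpha_t \bar\nabla \Fp{\ell}(\x_t^\ell)$, because the $\nabla F(\Bxt{0})$ terms cancel between $A(\cdots - \nabla F(\Bxt{0}))$ and the $\s_t^\ell$ term. This reproduces \eqref{eq:avg_iter}.

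Next I would project onto the orthogonal complement by left-multiplying by $\hU^{\T}$ and using $\hU^{\T}A = \hLambda_a \hU^{\T}$, $\hU^{\T}B^2 = \hLambda_b^2 \hU^{\T}$, $\hU^{\T}C = \hLambda_c\hU^{\T}$ (these hold because $A,B^2,C$ are polynomials in $W$ and hence share its eigenvectors, with $\hU$ spanning the eigenspaces for $\lambda_2,\dots,\lambda_n$). Writing $u_t^\ell := \hU^{\T}\x_t^\ell$ and $v_t^\ell := \hLambda_b^{-1}\hU^{\T}\s_t^\ell$ (note $\hLambda_b$ is invertible precisely because $B\x = 0$ iff $\x$ is consensus, so $\lambda_{b,i}\neq 0$ for $i=2,\dots,n$), the recursion \eqref{eq:abc} becomes the $2(n-1)$-dimensional linear system
\begin{align*}
    u_t^{\ell+1} &= (\hLambda_a\hLambda_c - \hLambda_b^2)u_t^\ell - \hLambda_b v_t^\ell - \alpha_t \hLambda_a \hU^{\T}(\nabla\Fp{\ell}(\x_t^\ell) - \nabla F(\Bxt{0})),\\
    v_t^{\ell+1} &= v_t^\ell + \hLambda_b u_t^\ell,
\end{align*}
where I used $\hU^{\T}\s_t^{\ell+1} = \hU^{\T}\s_t^\ell + \hU^{\T}B^2\x_t^\ell$ and multiplied the second line by $\hLambda_b^{-1}$. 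Stacking $(u_t^\ell; v_t^\ell)$ and conjugating by the block-diagonalizing matrix $V$ (which exists because each $2\times 2$ block $\left(\begin{smallmatrix}\lambda_{a,i}\lambda_{c,i}-\lambda_{b,i}^2 & -\lambda_{b,i}\\ \lambda_{b,i} & 1\end{smallmatrix}\right)$ can be put in a convenient — e.g. triangular or diagonal — form) yields exactly $\e_t^{\ell+1} = \Gamma \e_t^\ell - \alpha_t V^{-1}(\hLambda_a\hU^{\T}(\cdots); \0)$ with $\e_t^\ell = V^{-1}(\hU^{\T}\x_t^\ell; \hLambda_b^{-1}\hU^{\T}\s_t^\ell)$ and $\Gamma$ as stated.

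The main obstacle is establishing $\gamma := \normi{\Gamma} < 1$, i.e. choosing $V$ so that $\Gamma$ is a genuine contraction in operator norm, not merely spectral-radius-less-than-one. The eigenvalues of each $2\times 2$ block are governed by a characteristic polynomial in $\lambda_{a,i},\lambda_{b,i}^2,\lambda_{c,i}$; for the two concrete instantiations (GT-RR: $\lambda_{a,i}=\lambda_{c,i}=\lambda_i$, $\lambda_{b,i}^2=(1-\lambda_i)^2$; ED-RR: $\lambda_{a,i}=\lambda_i$, $\lambda_{c,i}=1$, $\lambda_{b,i}^2=1-\lambda_i$) one must verify the spectral radius of each block is strictly below $1$ for all $i\geq 2$, and then absorb the block-diagonalizing similarity into $V$ so that the resulting norm stays below $1$ uniformly. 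I would handle this by following the template of \cite{alghunaim2021unified}: verify the block spectral radii by direct root analysis of the quadratics, then pick $V$ block-diagonal with each block either a Jordan/Schur basis (with a small scaling to control off-diagonal entries) or, where the block is diagonalizable with real spectrum, its eigenbasis, and finally bound $\normi{\Gamma}$ via $\normi{\Gamma}^2 \le$ a quantity that is $<1$. The averaged-dynamics part and the eigen-projection bookkeeping are routine; the contraction estimate is where the real work — and the dependence of $\gamma$ on $1-\lambda$ — lives, and it is deferred to the appendix.
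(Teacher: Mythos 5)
Your proposal is correct and follows essentially the same route as the paper's proof: both exploit that $A$, $B^2$, $C$ are simultaneously diagonalized by the eigenbasis of $W$, project the transformed recursion \eqref{eq:abc} onto the consensus direction and its complement (the paper does this in one step by multiplying by $U^{\T}$, you do it in two), arrive at the same $2(n-1)$-dimensional system with the block matrix $G$, and defer the existence of $V$ and the contraction bound $\gamma=\normi{\Gamma}<1$ to Lemma~1 of \cite{alghunaim2021unified}. The only substantive detail worth making explicit in your consensus-projection step is that $\1^{\T}B=\0$ (not just $\1^{\T}B^2=\0$), which is what kills the $\1^{\T}B(\z_t^\ell-B\x_t^\ell)$ contribution in $\1^{\T}\s_t^\ell$; this holds since $B$ is symmetric with $B\1=\0$.
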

        \begin{proof}
            See Appendix \ref{app:re}.
        \end{proof}

        \begin{remark}
            \label{rem:gamma}
            It can be verified that $\orderi{1/(1-\gamma^2)}\sim \orderi{1/(1-\lambda)}$, as detailed in \cite[Appendix B]{alghunaim2021unified}. 
        \end{remark}

        \begin{remark}
            \label{rem:cons2e}
            It is worth noting that since $\s_t^0 =B h(\x_t^0) -B^2\x_t^0 + \alpha_t A\nabla F(\Bxt{0})$, and $\x_{t+1}^0 = \x_{t}^m$, we have $\e_{t+1}^0 = \e_t^m$. The consensus error $\normi{\x_t^\ell-\1\bx{t}{\ell}}^2$ is upper bounded by $\normi{V}^2\normi{\e_t^{\ell}}^2$ according to \cite[Remark 3]{alghunaim2021unified}, that is, $\normi{\x_t^\ell-\1\bx{t}{\ell}}^2
            \leq \normi{V}^2\normi{\e_t^{\ell}}^2.$
        \end{remark}

\section{Preliminary Analysis}
\label{sec:preliminary}

    In this section, we present several preliminary results. The primary objective is to locate a Lyapunov function $Q_t$ in the form of \eqref{eq:lyapunov} to trigger the convergence analysis:
    \begin{align}
        \label{eq:lyapunov}
        Q_t &:= f(\bx{t}{0}) - f^* + \frac{8\alpha_t L^2\norm{V}^2}{n(1-\gamma^2)} \norm{\e_t^0}^2.
    \end{align}
    
    The roadmap of the preliminary analysis goes as follows: 
    \begin{enumerate}
        \item We present Lemmas \ref{lem:rr} and \ref{lem:bounded_var} that help bound the corresponding terms in the remaining analysis. Specifically, Lemma \ref{lem:rr} (\cite[Lemma 1]{mishchenko2020random}) is a characterization of the variance when sampling a number of vectors without replacement from a finite set of vectors. Meanwhile, Lemma \ref{lem:bounded_var} characterizes the averaged variance of the gradient $\nabla f_i(x)$ among all the agents. 
        \item Given that the term $\normi{\e_t^0}^2$ is critical to the analysis from  \eqref{eq:lyapunov}, 
        we consider the epoch-wise error as in Lemma \ref{lem:re_epoch}. Then, the recursion between $\normi{\e_{t+1}^0}^2$ and $\normi{\e_t^0}^2$  is obtained in Lemma \ref{lem:cons_e}, which further guides us to construct the recursion of the term $\cL_t$ defined in \eqref{eq:cLt}.
        \item The approximate descent property is derived in Lemma \ref{lem:descent}, and the recursion of the Lyapunov function $Q_t$ is then constructed in Lemma \ref{lem:final_re} by combining the recursions derived above.
    \end{enumerate}

    We first define some constants to ease the presentation in the following analysis.
    \begin{equation}
        \label{eq:Cs}
        \begin{aligned}
            \C_1&:= \frac{(m+1)(1-\gamma^2)}{3m} + \frac{3\C_4}{2},\ \C_2:= \frac{\brk{1- \prt{\frac{1+\gamma^2}{2}}^m}\C_4}{1-\gamma^2},\\ 
            \C_3&:= 12\C_4 + \C_1, \ \C_4:= \norm{V^{-1}}^2 \norm{V}^2\norm{\hLambda_a}^2.
        \end{aligned}
    \end{equation}

    Lemma \ref{lem:rr} below characterizes the variance when sampling a number of vectors without replacement from a finite set of vectors as stated in \cite[Lemma 1]{mishchenko2020random}.
    
	\begin{lemma}
		\label{lem:rr}
		Let $X_1, \ldots, X_m \in \R^p$ be fixed vectors, $\bar{X} := \frac{1}{m}\sum_{j=1}^{m}X_{j}$ be the average and $\sigma^2:=\frac{1}{m}\sum_{j=1}^{m}\normi{X_j-\bar{X}}^2$ be the population variance. For $\ell \in [m-1]$, let $X_{\pi_0},\ldots,X_{\pi_{\ell-1}}$ be sampled uniformly without replacement from $\{X_1,\ldots,X_m\}$ and $\bar{X}_{\pi}$ be their average. Then, $\E\brki{\normi{\bar{X}_{\pi}-\bar{X}}^2}=\prti{m-\ell}/\prti{\ell(m-1)}\sigma^2$.
	\end{lemma}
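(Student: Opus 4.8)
The plan is to compute the expectation directly by expanding the squared norm and using symmetry of uniform sampling without replacement. Write $\bar X_\pi = \frac{1}{\ell}\sum_{s=0}^{\ell-1} X_{\pi_s}$, so that $\bar X_\pi - \bar X = \frac{1}{\ell}\sum_{s=0}^{\ell-1}(X_{\pi_s}-\bar X)$. Setting $Y_j := X_j - \bar X$, we have $\sum_{j=1}^m Y_j = \0$ and $\frac{1}{m}\sum_{j=1}^m\|Y_j\|^2 = \sigma^2$. Then
\begin{align*}
    \E\brk{\norm{\bar X_\pi - \bar X}^2}
    &= \frac{1}{\ell^2}\E\brk{\Bignorm{\sum_{s=0}^{\ell-1} Y_{\pi_s}}^2}
    = \frac{1}{\ell^2}\sum_{s=0}^{\ell-1}\sum_{s'=0}^{\ell-1}\E\brk{\iprod{Y_{\pi_s}}{Y_{\pi_{s'}}}}.
\end{align*}
The next step is to evaluate the two types of terms. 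For the diagonal terms $s = s'$, by symmetry each $\pi_s$ is uniform on $\{1,\dots,m\}$, so $\E[\|Y_{\pi_s}\|^2] = \frac{1}{m}\sum_{j=1}^m\|Y_j\|^2 = \sigma^2$, contributing $\ell\sigma^2$ in total. For the off-diagonal terms $s\neq s'$, the pair $(\pi_s,\pi_{s'})$ is uniform over ordered pairs of distinct indices, so $\E[\iprod{Y_{\pi_s}}{Y_{\pi_{s'}}}] = \frac{1}{m(m-1)}\sum_{j\neq k}\iprod{Y_j}{Y_k}$. Here I would use the identity $\sum_{j\neq k}\iprod{Y_j}{Y_k} = \bignorm{\sum_j Y_j}^2 - \sum_j\|Y_j\|^2 = 0 - m\sigma^2 = -m\sigma^2$, hence each off-diagonal expectation equals $-\sigma^2/(m-1)$, and there are $\ell(\ell-1)$ of them.

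Combining, $\E[\|\bar X_\pi - \bar X\|^2] = \frac{1}{\ell^2}\bigl(\ell\sigma^2 - \ell(\ell-1)\sigma^2/(m-1)\bigr) = \frac{\sigma^2}{\ell}\bigl(1 - \frac{\ell-1}{m-1}\bigr) = \frac{\sigma^2}{\ell}\cdot\frac{m-\ell}{m-1} = \frac{m-\ell}{\ell(m-1)}\sigma^2$, which is the claimed identity. Since this is quoted verbatim from \cite[Lemma 1]{mishchenko2020random}, one may alternatively simply cite that reference; I include the short computation for completeness.

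The only mild subtlety — and the one place to be careful — is justifying that $(\pi_s,\pi_{s'})$ for $s\neq s'$ is uniformly distributed over all ordered pairs of distinct elements of $\{1,\dots,m\}$; this follows because a uniformly random permutation restricted to any two coordinates induces the uniform distribution on ordered distinct pairs, and sampling $\ell$ items without replacement is precisely reading off the first $\ell$ coordinates of such a permutation. Beyond that, the argument is a routine second-moment expansion with no real obstacle. I note the identity is an exact equality (not a bound), which is what makes it useful downstream when controlling the reshuffling variance $\svar$.
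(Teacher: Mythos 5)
Your proof is correct. The paper itself gives no proof of this lemma — it simply quotes it as \cite[Lemma 1]{mishchenko2020random} — so there is no in-paper argument to compare against; your computation is the standard second-moment expansion underlying that cited result (diagonal terms contribute $\ell\sigma^2$, off-diagonal terms contribute $-\ell(\ell-1)\sigma^2/(m-1)$ via $\sum_j Y_j = 0$), and your justification of the exchangeability of $(\pi_s,\pi_{s'})$ is the right point to flag. Nothing is missing.
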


    Lemma \ref{lem:bounded_var} is based on Assumption \ref{as:comp_fun} and used for bounding the  averaged variances of the stochastic gradients  obtained by RR updates among all the agents. In addition, it generalizes the so-called bounded gradient dissimilarity assumption in the distributed setting; see, e.g., \cite{zhang2021fedpd,huang2023distributed}.
    \begin{lemma}
        \label{lem:bounded_var}
        Let Assumptions \ref{as:comp_fun} hold.  We have for any $x\in\R^p$,
        \small
        \begin{align*}
            \frac{1}{mn}\sum_{i=1}^n\sum_{\ell=1}^m\norm{\nabla f_{i,\ell}(x) - \nabla f_i(x)}^2\leq 2L\prt{f(x) -  f^*} + 2L\sigfmn,
        \end{align*}\normalsize
        where $\sigfmn:= f^* - \fmn$ and $f^*_{i,\ell}:=\inf_{x\in\R^p} f_{i,\ell}(x)$.
    \end{lemma}
    \begin{proof}
        The proof is finished by noting $\normi{\nabla f_{i,\ell}(x)}^2\leq 2L\prti{f_{i,\ell}(x) - f_{i,\ell}^*}$ and $\frac{1}{m}\summ\normi{\nabla f_{i,\ell}(x)-\nabla f_{i}(x)}^2 
    \leq \frac{1}{m}\summ\normi{\nabla f_{i,\ell}(x)}^2$.
    \end{proof}
    
    We are now ready to derive the three critical recursions for the corresponding error terms. 
    
    In light of Lemma \ref{lem:re}, we have addressed the aforementioned technical challenge on handling consecutive shuffled gradients. However, working with the recursion between $\normi{\e_t^{\ell + 1}}^2$ and $\normi{\e_t^\ell}^2$ according to the unified form \eqref{eq:abc}
    would result in a similar convergence rate for the consensus error to that of \cite[Lemma 14]{huang2023drr}. Such undesirable outcome is caused by the bias in $\nabla \fp{i}{\ell}(\xitl)$ relative to $\nabla f_i(\xitl)$, given the past iterates $\crk{x_{i,t}^{\ell - 1}}$. Specifically, we have $\E[\nabla \fp{i}{\ell}(\xitl)| \cF_t^{\ell - 1}]\neq \nabla f_i(\xitl)$ in general, where the filtration $\cF_t^{\ell}$ ($\ell=0,1,\ldots,m-1$) is generated by $\crki{x_{i,p}^{j}| i\in[n], j=0,1,\ldots, \ell,p=0,1,\ldots,t}$. However, we observe that      
    \begin{align}
        \label{eq:epoch_unb} 		         
        \condE{\nabla \fp{i}{\ell}(x_{i,t}^0)}{\cF_t^0} = \nabla f_i(x_{i,t}^0). 
    \end{align}      
    This motivates us to consider the epoch-wise error in Lemma \ref{lem:re_epoch} below. It is worth noting that a similar observation was made in \cite{ying2018stochastic}.

    \begin{lemma}
		\label{lem:re_epoch}
		Let Assumptions \ref{as:graph}, \ref{as:comp_fun}, and \ref{as:abc} hold. 
        We have
            \small
		\begin{equation}
		    \label{eq:cons_epoch}
            \begin{aligned}
                &\e_t^{\ell + 1} =\Gamma^{\ell + 1}\e_t^0\\
                & - \alpha_t\sum_{j=0}^{\ell} \Gamma^{\ell - j}V^{-1}\begin{pmatrix}
                    \hLambda_a \hU^{\T}\prt{\nabla \Fp{j}(\x_t^{j}) - \nabla \Fp{j}(\Bxt{0})}\\
                    \0
                \end{pmatrix}\\
                &- \alpha_t\sum_{j=0}^{\ell} \Gamma^{\ell - j}V^{-1}\begin{pmatrix}
                    \hLambda_a \hU^{\T}\prt{\nabla \Fp{j}(\Bxt{0}) - \nabla F(\Bxt{0})}\\
                    \0
                \end{pmatrix}.
        	\end{aligned}
		\end{equation}\normalsize
    \end{lemma}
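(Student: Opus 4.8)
The plan is to unroll the recursion for $\e_t^{\ell+1}$ from \cref{lem:re} over the inner loop index, starting from $\e_t^0$, and then split the resulting stochastic-gradient term into two pieces by inserting $\nabla \Fp{j}(\Bxt{0})$ as an intermediate point. Concretely, \cref{lem:re} gives the one-step recursion
\begin{equation*}
    \e_t^{\ell+1} = \Gamma\e_t^{\ell} - \alpha_t V^{-1}\begin{pmatrix}
        \hLambda_a\hU^{\T}\prt{\nabla\Fp{\ell}(\x_t^\ell) - \nabla F(\Bxt{0})}\\
        \0
    \end{pmatrix}.
\end{equation*}

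First I would iterate this identity: applying it with $\ell$ replaced by $\ell-1, \ell-2, \ldots, 0$ and substituting repeatedly, a routine induction on $\ell$ yields
\begin{equation*}
    \e_t^{\ell+1} = \Gamma^{\ell+1}\e_t^0 - \alpha_t\sum_{j=0}^{\ell}\Gamma^{\ell-j} V^{-1}\begin{pmatrix}
        \hLambda_a\hU^{\T}\prt{\nabla\Fp{j}(\x_t^j) - \nabla F(\Bxt{0})}\\
        \0
    \end{pmatrix},
\end{equation*}
where the exponent $\ell-j$ on $\Gamma$ simply counts how many further mixing steps act on the term generated at inner iteration $j$. The base case $\ell=0$ is exactly the statement of \cref{lem:re} with the convention $\Gamma^0 = I$.

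Next I would decompose the summand. Since the top block of the vector depends linearly on $\nabla\Fp{j}(\x_t^j) - \nabla F(\Bxt{0})$, I can write $\nabla\Fp{j}(\x_t^j) - \nabla F(\Bxt{0}) = \bigl(\nabla\Fp{j}(\x_t^j) - \nabla\Fp{j}(\Bxt{0})\bigr) + \bigl(\nabla\Fp{j}(\Bxt{0}) - \nabla F(\Bxt{0})\bigr)$, and by linearity of $V^{-1}$, $\hLambda_a$, $\hU^{\T}$ and of the block-vector embedding, the sum splits into the two sums appearing in \eqref{eq:cons_epoch}. That is the whole argument; there is essentially no obstacle here beyond bookkeeping the indices carefully and checking the $\Gamma$-exponents match. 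The only point requiring a little care is confirming that the inductive step genuinely produces $\Gamma^{\ell-j}$ (not $\Gamma^{\ell-j+1}$ or similar off-by-one) and that the $j=\ell$ term carries $\Gamma^0$; this is verified by tracking one extra multiplication by $\Gamma$ per induction step. The motivation for performing this split — rather than leaving the combined gradient difference — is the key conceptual observation \eqref{eq:epoch_unb}: the second sum involves $\nabla\Fp{j}(\Bxt{0}) - \nabla F(\Bxt{0})$ evaluated at the \emph{epoch-start} iterate $\x_t^0$, whose conditional expectation given $\cF_t^0$ vanishes, so it can later be controlled using the without-replacement variance bound of \cref{lem:rr}; the first sum is a genuine Lipschitz/consensus term bounded via \cref{as:comp_fun}. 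Thus the lemma, though its proof is elementary, sets up precisely the separation needed to beat the conservative consensus-error rate of \cite[Lemma 14]{huang2023drr}.
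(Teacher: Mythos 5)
Your proposal is correct and matches the paper's proof exactly: the paper's argument is precisely to unroll the one-step recursion of \cref{lem:re}, and the split of $\nabla\Fp{j}(\x_t^j) - \nabla F(\Bxt{0})$ into the Lipschitz part and the zero-conditional-mean part is just the linearity step you describe. Your index bookkeeping (the $\Gamma^{\ell-j}$ exponents and the $j=\ell$ term carrying $\Gamma^0$) is also right.
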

    \begin{proof}
        Unrolling the recursion in Lemma \ref{lem:re} yields the desired result.
    \end{proof}
    
    \begin{remark}
        \label{rem:cons_drr}
		It is possible to derive a consensus recursion for D-RR similar to \eqref{eq:cons_epoch}:
        \small
		\begin{equation}
			\label{eq:cons_epoch_drr}
			\begin{aligned}
				&\x_{t}^{\ell + 1} - \1(\bx{t}{\ell + 1})^{\T} = \prt{W - \frac{\1\1^{\T}}{n}}^{\ell + 1} \prt{\x_t^0 - \Bxt{0}}\\
                &- \alpha_t\sum_{j=0}^{\ell}\prt{W - \frac{\1\1^{\T}}{n}}^{\ell - j+1}\Delta^\ell_t,
			\end{aligned}
		\end{equation}\normalsize
        where $\Delta^\ell_t:= \nabla \Fp{\ell}(\x_t^{\ell}) - \nabla \Fp{\ell}(\Bxt{0}) + \nabla \Fp{\ell}(\Bxt{0}) - \nabla F(\Bxt{0}) + \nabla F(\Bxt{0})$.

        The key difference between \eqref{eq:cons_epoch_drr} and \eqref{eq:cons_epoch} lies in the last term of \eqref{eq:cons_epoch_drr}, which introduces additional disturbance about data heterogeneity due to its dependence on $\sumn\normi{\nabla f_i(\bx{t}{0})}^2$. 
        
	\end{remark}

    Lemma \ref{lem:cons_e} is then derived according to Lemma \ref{lem:re_epoch} for bounding $\condEi{\normi{\e_{t+1}^0}^2}{\cF_t^0}$.
	\begin{lemma}
		\label{lem:cons_e}
		Let Assumptions \ref{as:graph}, \ref{as:comp_fun}, and \ref{as:abc} hold. We have
        \small
		\begin{align*}
			& \condE{\norm{\e_{t+1}^0}^2}{\cF_t^0} \leq (\frac{1 + \gamma^2}{2})^m\norm{\e_t^0}^2 + \frac{3\alpha_t^2mn \C_4 L^2}{\normi{V}^2(1-\gamma^{2})}\condE{\cL_t}{\cF_t^0}\\
			&\quad + \frac{6\alpha_t^2m^2nL\C_4}{\normi{V}^2}\prt{f(\bx{t}{0}) - f^* + \sigfmn}
		\end{align*}\normalsize
        where $\cL_{t}$ is defined as 
            \begin{align}
                \label{eq:cLt}
                \cL_t:= \frac{1}{n}\sum_{\ell=0}^{m-1}\prt{\norm{V}^2\norm{\e_t^\ell}^2 + n\norm{\bx{t}{\ell} - \bx{t}{0}}^2}.
            \end{align}
	\end{lemma}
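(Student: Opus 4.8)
The plan is to unroll the one-step recursion of \cref{lem:re} over an entire epoch and then take a conditional expectation at the epoch boundary. Setting $\ell=m-1$ in \eqref{eq:cons_epoch} and using the boundary identity $\e_{t+1}^0=\e_t^m$ from \cref{rem:cons2e}, I write
\begin{align*}
    \e_{t+1}^0 = \Gamma^m\e_t^0 + R_1 + R_2,
\end{align*}
where, reading off \eqref{eq:cons_epoch} with $\ell=m-1$, the term $R_1$ collects the ``iterate-drift'' contributions built from $\nabla\Fp{j}(\x_t^j)-\nabla\Fp{j}(\Bxt{0})$ ($j=0,\dots,m-1$) and $R_2$ collects the ``heterogeneity'' contributions built from $\nabla\Fp{j}(\Bxt{0})-\nabla F(\Bxt{0})$, both after the same linear operations ($\hU^{\T}$, scaling by $\hLambda_a$, embedding in the first block, propagation by $\alpha_t\Gamma^{m-1-j}V^{-1}$).

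The key point that makes the estimate go through is that, conditioned on $\cF_t^0$, every slot $\pi^i_j$ of the epoch-$t$ permutation is marginally uniform on $[m]$, so $\condE{\nabla f_{i,\pi^i_j}(\bx{t}{0})}{\cF_t^0}=\nabla f_i(\bx{t}{0})$ --- the same observation as \eqref{eq:epoch_unb}. Hence $\condE{R_2}{\cF_t^0}=0$, and the cross term $\condE{\iprod{\Gamma^m\e_t^0}{R_2}}{\cF_t^0}$ vanishes because $\Gamma^m\e_t^0$ is $\cF_t^0$-measurable. Expanding $\norm{\e_{t+1}^0}^2$, using this cancellation together with $2\iprod{\Gamma^m\e_t^0}{R_1}\le \beta\norm{\Gamma^m\e_t^0}^2+\beta^{-1}\norm{R_1}^2$, $\norm{R_1+R_2}^2\le\tfrac32\norm{R_1}^2+3\norm{R_2}^2$, and $\norm{\Gamma^m}\le\gamma^m$, I get
\begin{align*}
    \condE{\norm{\e_{t+1}^0}^2}{\cF_t^0}\le (1+\beta)\gamma^{2m}\norm{\e_t^0}^2 + \bigl(\beta^{-1}+\tfrac32\bigr)\condE{\norm{R_1}^2}{\cF_t^0} + 3\,\condE{\norm{R_2}^2}{\cF_t^0}.
\end{align*}
I then choose $\beta$ so that $(1+\beta)\gamma^{2m}=\bigl(\tfrac{1+\gamma^2}{2}\bigr)^m$, i.e.\ $1+\beta=(1+u)^m$ with $u=\tfrac{1-\gamma^2}{2\gamma^2}$; Bernoulli's inequality $(1+u)^m\ge 1+mu$ then gives $\beta^{-1}\le\tfrac{2\gamma^2}{m(1-\gamma^2)}$, which is exactly what prevents the drift coefficient from blowing up. Note it is essential that $R_2$ is paired with $R_1$ under a constant-factor Young split rather than with $\Gamma^m\e_t^0$ under the $(1+\beta)$ split, since $1+\beta=\bigl(\tfrac{1+\gamma^2}{2\gamma^2}\bigr)^m$ explodes as $\gamma\to 0$.

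It remains to bound the two residual second moments. For $R_1$: since $\norm{\hU^{\T}}=1$ and $\norm{\Gamma^{m-1-j}}\le 1$, Cauchy--Schwarz over the $m$ summands gives $\norm{R_1}^2\le m\alpha_t^2\norm{V^{-1}}^2\norm{\hLambda_a}^2\sum_{j=0}^{m-1}\norm{\nabla\Fp{j}(\x_t^j)-\nabla\Fp{j}(\Bxt{0})}^2$; $L$-smoothness (\cref{as:comp_fun}) bounds the $j$-th summand by $L^2\norm{\x_t^j-\1\bx{t}{0}}^2\le 2L^2\bigl(\norm{\x_t^j-\1\bx{t}{j}}^2+n\norm{\bx{t}{j}-\bx{t}{0}}^2\bigr)$, and \eqref{eq:inequality_cons} then identifies $\sum_{j=0}^{m-1}(\cdot)$ with $2L^2n\,\cL_t$, so $\condE{\norm{R_1}^2}{\cF_t^0}\le 2mnL^2\alpha_t^2\norm{V^{-1}}^2\norm{\hLambda_a}^2\condE{\cL_t}{\cF_t^0}$; combined with $\beta^{-1}+\tfrac32$ (and $m\ge 2$, which is the only nondegenerate reshuffling regime) this produces the $\tfrac{3\alpha_t^2mn}{1-\gamma^2}\norm{V^{-1}}^2\norm{\hLambda_a}^2L^2$ coefficient. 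For $R_2$: the same operator-norm and Cauchy--Schwarz steps give $\norm{R_2}^2\le m\alpha_t^2\norm{V^{-1}}^2\norm{\hLambda_a}^2\sum_{j=0}^{m-1}\norm{\nabla\Fp{j}(\Bxt{0})-\nabla F(\Bxt{0})}^2$, and taking $\condE{\cdot}{\cF_t^0}$ with the marginal-uniformity identity collapses $\condE{\sum_{j=0}^{m-1}\norm{\nabla\Fp{j}(\Bxt{0})-\nabla F(\Bxt{0})}^2}{\cF_t^0}$ to $mn$ times the quantity on the left-hand side of \eqref{eq:bv}; \cref{lem:bounded_var} bounds this by $2mnL\bigl[(f(\bx{t}{0})-f^*)+(f^*-\fmn)\bigr]$, and multiplying by the factor $3$ yields the two $6\alpha_t^2m^2nL\norm{V^{-1}}^2\norm{\hLambda_a}^2$ terms. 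Collecting the three pieces gives the claim.

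The main obstacle is precisely the bias: unlike for unbiased decentralized SGD, $\nabla\fp{i}{\ell}(x_{i,t}^\ell)$ is not conditionally unbiased for $\ell\ge 1$, so a per-inner-iteration recursion with Young's inequality would destroy the only martingale structure available. One must therefore condition only at epoch boundaries and absorb the \emph{entire} drift term $R_1$ deterministically into the consensus-type quantity $\cL_t$ via smoothness, while still recovering the sharp geometric factor $\bigl(\tfrac{1+\gamma^2}{2}\bigr)^m$ without the Young penalty $\beta^{-1}$ --- or the pairing of $R_2$ with $R_1$ --- degrading the dependence on $1-\gamma^2$ (equivalently $1-\lambda$, cf.\ \cref{rem:gamma}) or on $m$; the Bernoulli estimate for $\beta$ and the choice of which terms to group under which inequality are exactly what handle this.
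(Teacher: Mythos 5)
Your proof is correct and follows essentially the same route as the paper's: unroll the epoch-wise recursion of \cref{lem:re_epoch}, use the epoch-start conditional unbiasedness \eqref{eq:epoch_unb} to kill the cross term with the heterogeneity part, apply Young's inequality with the parameter chosen so that $(1+\beta)\gamma^{2m}=\bigl(\tfrac{1+\gamma^2}{2}\bigr)^m$ (controlling $\beta^{-1}$ via Bernoulli), fold the drift into $\cL_t$ through Cauchy--Schwarz, smoothness and \eqref{eq:inequality_cons}, and invoke \cref{lem:bounded_var} for the heterogeneity term. The only cosmetic difference is that you bound $\norm{\x_t^j-\1\bx{t}{0}}^2$ with a factor $2$ where the paper uses the exact orthogonal (Pythagorean) decomposition \eqref{eq:decomp}, which is why you need $m\ge 2$ to recover the stated constants while the paper does not.
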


    \begin{proof}
            See Appendix \ref{app:cons_e}.
    \end{proof}


    We next derive the recursion of $\condEi{\cL_t}{\cF_t^0}$ to help eliminate the corresponding term in Lemma \ref{lem:cons_e}. 

	\begin{lemma}
		\label{lem:cL}
		Let Assumptions \ref{as:graph}, \ref{as:comp_fun}, and \ref{as:abc} hold. Suppose the stepsize $\alpha_t$ satisfy $\alpha_t \leq \min\crki{1/(2mL), \prti{1-\gamma^2}/\prti{4L\sqrt{\C_4}}}.$
		We have
		\begin{align*}
			&\condE{\cL_t}{\cF_t^0} \leq \frac{4\brk{1-(\frac{1+\gamma^2}{2})^m}\norm{V}^2}{1-\gamma^2}\frac{\norm{\e_t^0}^2}{n}\\
            &\quad + 4\alpha_t^2m^3\norm{\nabla f(\bx{t}{0})}^2 + \frac{8m^2\alpha_t^2\C_1 L}{1-\gamma^2}\prt{f(\bx{t}{0}) - f^* + \sigfmn}.
		\end{align*}
	\end{lemma}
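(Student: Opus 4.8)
The plan is to control the two components of $\cL_t$ separately and couple them. Writing $E_t:=\sum_{\ell=0}^{m-1}\condE{\norm{\e_t^\ell}^2}{\cF_t^0}$ and $X_t:=\sum_{\ell=0}^{m-1}\condE{\norm{\bx{t}{\ell}-\bx{t}{0}}^2}{\cF_t^0}$, we have $\condE{\cL_t}{\cF_t^0}=\tfrac{\norm{V}^2}{n}E_t+X_t$, so it suffices to bound each of $E_t$ and $X_t$ in terms of $\norm{\e_t^0}^2$, the two gaps $f(\bx{t}{0})-f^*$ and $f^*-\fmn$, the quantity $\norm{\nabla f(\bx{t}{0})}^2$, and $E_t,X_t$ themselves, and then to decouple the resulting pair of inequalities using the two stepsize restrictions. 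Throughout, conditioned on $\cF_t^0$ everything except the per-agent permutations $\{\pi^i\}$ drawn at epoch $t$ is deterministic, so the two function-value gaps enter as constants.

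\emph{Bounding $X_t$.} By the averaged recursion in \cref{lem:re}, $\bx{t}{\ell}-\bx{t}{0}=-\alpha_t\sum_{j=0}^{\ell-1}\bar\nabla\Fp{j}(\x_t^j)$, and I would split each shuffled average gradient as $\bar\nabla\Fp{j}(\x_t^j)=\bigl[\bar\nabla\Fp{j}(\x_t^j)-\bar\nabla\Fp{j}(\Bxt{0})\bigr]+\bigl[\bar\nabla\Fp{j}(\Bxt{0})-\nabla f(\bx{t}{0})\bigr]+\nabla f(\bx{t}{0})$. The first bracket is at most $\tfrac{L}{\sqrt n}\norm{\x_t^j-\Bxt{0}}$ by \cref{as:comp_fun}, hence --- after the triangle inequality and \eqref{eq:inequality_cons} --- controlled by $\norm{\e_t^j}^2$ and $\norm{\bx{t}{j}-\bx{t}{0}}^2$. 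The partial sum $\sum_{j=0}^{\ell-1}$ of the second bracket equals $\tfrac1n\sum_i\sum_{j=0}^{\ell-1}\bigl(\nabla\fp{i}{j}(\bx{t}{0})-\nabla f_i(\bx{t}{0})\bigr)$; its conditional second moment is bounded via \cref{lem:rr} applied to each agent's partial sum (with population variances $\sigma_i^2$ of the agents' gradients at $\bx{t}{0}$ and using $\tfrac{m-\ell}{m-1}\le1$), and then via \cref{lem:bounded_var} by a quantity of order $\ell\bigl[2L(f(\bx{t}{0})-f^*)+2L(f^*-\fmn)\bigr]$ (with the appropriate $n$-factors). The third term contributes $\alpha_t^2\ell^2\norm{\nabla f(\bx{t}{0})}^2$. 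Summing over $\ell$ (using $\sum_\ell\ell=\mathcal{O}(m^2)$, $\sum_\ell\ell^2=\mathcal{O}(m^3)$, $\sum_\ell\sum_{j<\ell}\le m\sum_j$) and absorbing the resulting $\mathcal{O}(\alpha_t^2m^2L^2)X_t$ self-term with $\alpha_t\le\tfrac1{2mL}$ yields a bound of the claimed shape for $X_t$ --- the $\norm{\nabla f(\bx{t}{0})}^2$ term with coefficient $\mathcal{O}(\alpha_t^2m^3)$, and the function-gap term carrying the $\tfrac{(m+1)(1-\gamma^2)}{3m}$ part of $\C_1$.

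\emph{Bounding $E_t$.} I would start from the fully unrolled recursion \eqref{eq:cons_epoch}. The homogeneous term contributes $\norm{\Gamma^\ell\e_t^0}^2\le\gamma^{2\ell}\norm{\e_t^0}^2\le\bigl(\tfrac{1+\gamma^2}{2}\bigr)^\ell\norm{\e_t^0}^2$, whose sum over $\ell$ gives exactly the factor $\tfrac{1-(\frac{1+\gamma^2}{2})^m}{1-\frac{1+\gamma^2}{2}}$. The consensus-deviation sum $\sum_{j=0}^{\ell-1}\Gamma^{\ell-1-j}(\cdot)$ I would handle by Cauchy--Schwarz over $j$ (a factor $\le m$) followed by a $\gamma^2$-geometric series over $\ell$ (producing $\tfrac1{1-\gamma^2}$ and the constant $\C_4=\norm{V^{-1}}^2\norm{V}^2\norm{\hLambda_a}^2$), again reducing to $\norm{\e_t^j}^2$ and $\norm{\bx{t}{j}-\bx{t}{0}}^2$ via \eqref{eq:inequality_cons}. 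The remaining shuffling-noise sum $\sum_{j=0}^{\ell-1}\Gamma^{\ell-1-j}V^{-1}\bigl(\hLambda_a\hU^{\T}(\nabla\Fp{j}(\Bxt{0})-\nabla F(\Bxt{0}));\,\0\bigr)$ is the delicate term: because these summands are conditionally zero-mean but negatively correlated across $j$ under sampling without replacement (equivalently, \cref{lem:rr} applied to partial sums together with the exact cancellation $\sum_{j=0}^{m-1}(\nabla\fp{i}{j}(\bx{t}{0})-\nabla f_i(\bx{t}{0}))=\0$ over a whole epoch), the geometrically decaying weights $\gamma^{\ell-1-j}$ produce only a single power of $\tfrac1{1-\gamma^2}$ rather than $\tfrac1{(1-\gamma^2)^2}$, and its second moment is bounded through \cref{lem:bounded_var} by the function-value gaps. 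Crucially --- in contrast to the D-RR recursion \eqref{eq:cons_epoch_drr} of \cref{rem:cons_drr} --- no term proportional to $\sum_i\norm{\nabla f_i(\bx{t}{0})}^2$ appears, so no data-heterogeneity penalty is incurred. Summing over $\ell$ gives a bound for $E_t$ of the required form.

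\emph{Closing the system.} Adding $\tfrac{\norm{V}^2}{n}E_t+X_t$, I would use $\alpha_t\le\tfrac1{2mL}$ to absorb the $\mathcal{O}(\alpha_t^2m^2L^2)X_t$ self-term and $\alpha_t\le\tfrac{1-\gamma^2}{4L\sqrt{\C_4}}$ to absorb the $\mathcal{O}\!\bigl(\tfrac{m\alpha_t^2\C_4L^2}{1-\gamma^2}\bigr)\tfrac{\norm{V}^2}{n}E_t$ self-term (the $\sqrt{\C_4}$ in this threshold being precisely what the consensus-deviation estimate calls for). This doubles the surviving $\norm{\e_t^0}^2/n$ coefficient to $\tfrac{2[1-(\frac{1+\gamma^2}{2})^m]\norm{V}^2}{1-\frac{1+\gamma^2}{2}}$, leaves the $\norm{\nabla f(\bx{t}{0})}^2$ term with coefficient $4\alpha_t^2m^3$, and collapses the remaining constants into $\tfrac{8m^2\alpha_t^2\C_1L}{1-\gamma^2}\bigl[(f(\bx{t}{0})-f^*)+(f^*-\fmn)\bigr]$ with $\C_1=\tfrac{(m+1)(1-\gamma^2)}{3m}+\tfrac{3\C_4}{2}$. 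The main obstacle is the shuffling-noise sum in the $E_t$-recursion: a naive triangle-inequality bound over the $\Gamma$-weights loses a whole factor $\tfrac1{1-\gamma^2}\sim\tfrac1{1-\lambda}$ and would spoil the advertised rate, so the without-replacement covariance cancellation must genuinely be exploited; the remaining burden is bookkeeping the constants carefully enough that the two stepsize thresholds close the coupled inequalities.
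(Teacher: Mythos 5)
Your overall architecture matches the paper's proof: you split $\cL_t$ into the two sums $E_t$ and $X_t$, bound $X_t$ by decomposing $\bx{t}{\ell}-\bx{t}{0}=-\tfrac{\alpha_t}{n}\sum_{j<\ell}\sumn\nabla\fp{i}{j}(x_{i,t}^j)$ and invoking \cref{lem:rr} on the partial averages of the shuffled gradients at $\bx{t}{0}$ (this is exactly \eqref{eq:cL_bar0}--\eqref{eq:cL_bar}, and it is indeed the step the paper credits for the improved $m$-dependence), bound $E_t$ from the unrolled recursion \eqref{eq:cons_epoch}, and close the coupled system with the two stepsize restrictions. All of that is sound and is essentially the paper's argument.

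The one substantive divergence is your treatment of the shuffling-noise sum $\sum_{j\le\ell}\Gamma^{\ell-j}V^{-1}\bigl(\hLambda_a\hU^{\T}(\nabla\Fp{j}(\Bxt{0})-\nabla F(\Bxt{0}));\0\bigr)$ inside the $E_t$ bound, and here you both misdiagnose the difficulty and leave a gap. You assert that a direct norm bound would cost $1/(1-\gamma^2)^2$ and that the without-replacement covariance cancellation ``must genuinely be exploited.'' Neither is what the paper does, nor is the cancellation needed: in \eqref{eq:cond_e_s1}--\eqref{eq:cond_e_s2} the paper applies the elementary inequality $\normi{\sum_{j\le \ell}\Gamma^{\ell-j}a_j}^2\le(\ell+1)\sum_{j\le\ell}(\gamma^2)^{\ell-j}\normi{a_j}^2$, paying a factor $(\ell+1)\le m$ in place of a second geometric factor; after summing over $\ell$ and using only the marginal uniformity of each $\pi_j^i$ together with \cref{lem:bounded_var}, this yields a contribution of order $\alpha_t^2 m^2\C_4 L/(1-\gamma^2)$ --- precisely the $\C_4$-part of $\C_1$ that the stated bound accommodates. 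Moreover, your proposed alternative is not justified as written: \cref{lem:rr} concerns unweighted partial averages, whereas here the summands carry distinct, non-commuting matrix weights $\Gamma^{\ell-j}$, so the negative cross-covariances $\E[\langle a_j,a_{j'}\rangle]=-\sigma^2/(m-1)$ cannot simply be dropped without verifying the sign of the weighted trace terms $\tr\bigl((\Gamma^{\ell-j'})^{\T}\Gamma^{\ell-j}\,\E[\tilde a_j\tilde a_{j'}^{\T}]\bigr)$, which you do not do. The gap is repairable --- revert to the paper's Young/Cauchy--Schwarz bound for this term --- but as sketched this step of your $E_t$ estimate does not go through. (A minor further note: the self-term you must absorb from $E_t$ is of order $\alpha_t^2m^2L^2\C_4/(1-\gamma^2)\cdot\condEi{\cL_t}{\cF_t^0}$, so the threshold actually needed is $\alpha_t\lesssim(1-\gamma^2)/(mL\sqrt{\C_4})$, as used in the paper's own proof; the $m$-free threshold in the lemma statement, which you copied, does not suffice to make that coefficient at most a constant.)
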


	\begin{proof}
            See Appendix \ref{app:cL}.
	\end{proof}

    Regrading the recursion of the function values, Lemma \ref{lem:descent} is directly obtained from Assumption \ref{as:comp_fun} provided that $\bx{t + 1}{0} = \bx{t}{0} - \alpha_t / n \sumn \sum_{\ell=0}^{m-1} \nabla \fp{i}{\ell}(x_{i,t}^{\ell})$. 
    \begin{lemma}
        \label{lem:descent}
        Let Assumptions \ref{as:graph}, \ref{as:comp_fun}, and \ref{as:abc} hold. Set the stepsize $\alpha_t\leq 1/(mL)$. We have
            \begin{align}
                f(\bar{x}_{t+1}^{0})&\leq {f(\bar{x}_{t}^{0})}-\frac{\alpha_t{m}}{2}{\norm{{\nabla}f(\bar{x}_t^{0})}^2}+{\alpha_t}{L}^2\cL_{t}.
            \end{align}
    \end{lemma}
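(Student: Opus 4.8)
The plan is to apply the standard quadratic upper bound (``descent lemma'') for the $L$-smooth function $f$ along the one-epoch displacement, and then to control the resulting cross term using the permutation structure of the RR updates. First I would unroll the averaged recursion \eqref{eq:avg} (equivalently \eqref{eq:avg_iter}) over $\ell = 0, 1, \ldots, m-1$ to get $\bx{t+1}{0} = \bx{t}{0} - \alpha_t \sum_{\ell=0}^{m-1}\bar{\nabla}\Fp{\ell}(\x_t^{\ell}) = \bx{t}{0} - \frac{\alpha_t}{n}\sumn\sum_{\ell=0}^{m-1}\nabla\fp{i}{\ell}(\xitl)$, and abbreviate $d_t := \frac{1}{m}\sum_{\ell=0}^{m-1}\bar{\nabla}\Fp{\ell}(\x_t^{\ell}) = \frac{1}{mn}\sumn\sum_{\ell=0}^{m-1}\nabla\fp{i}{\ell}(\xitl)$, so that $\bx{t+1}{0} - \bx{t}{0} = -\alpha_t m\, d_t$. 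Since $f = \frac{1}{n}\sumn f_i$ is $L$-smooth (an average of the $L$-smooth $\fil$'s), the descent lemma gives $f(\bx{t+1}{0}) \leq f(\bx{t}{0}) - \alpha_t m\iprod{\nabla f(\bx{t}{0})}{d_t} + \frac{L\alpha_t^2 m^2}{2}\norm{d_t}^2$. Applying the polarization identity $-\iprod{a}{b} = \frac{1}{2}\norm{a-b}^2 - \frac{1}{2}\norm{a}^2 - \frac{1}{2}\norm{b}^2$ to the cross term yields
\[
f(\bx{t+1}{0}) \leq f(\bx{t}{0}) - \frac{\alpha_t m}{2}\norm{\nabla f(\bx{t}{0})}^2 + \frac{\alpha_t m}{2}\norm{\nabla f(\bx{t}{0}) - d_t}^2 - \frac{\alpha_t m}{2}\prt{1 - L\alpha_t m}\norm{d_t}^2.
\]
Because $\alpha_t \leq 1/(mL)$, the last term is nonpositive and can be discarded, leaving the target leading term $-\frac{\alpha_t m}{2}\norm{\nabla f(\bx{t}{0})}^2$.

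It then remains to bound $\frac{\alpha_t m}{2}\norm{\nabla f(\bx{t}{0}) - d_t}^2$ by $\alpha_t L^2\cL_t$. The key structural fact is that summing a gradient over a full permutation equals summing it over all $m$ indices, so $\sum_{\ell=0}^{m-1}\nabla\fp{i}{\ell}(\bx{t}{0}) = \sum_{k=1}^{m}\nabla f_{i,k}(\bx{t}{0}) = m\,\nabla f_i(\bx{t}{0})$, hence $\nabla f(\bx{t}{0}) = \frac{1}{mn}\sumn\sum_{\ell=0}^{m-1}\nabla\fp{i}{\ell}(\bx{t}{0})$ and therefore $\nabla f(\bx{t}{0}) - d_t = \frac{1}{mn}\sumn\sum_{\ell=0}^{m-1}\brk{\nabla\fp{i}{\ell}(\bx{t}{0}) - \nabla\fp{i}{\ell}(\xitl)}$. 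Jensen's inequality over the $mn$ summands, followed by $L$-smoothness of each $\fp{i}{\ell}$ (\cref{as:comp_fun}), gives $\norm{\nabla f(\bx{t}{0}) - d_t}^2 \leq \frac{L^2}{mn}\sumn\sum_{\ell=0}^{m-1}\norm{\xitl - \bx{t}{0}}^2$. Splitting $\xitl - \bx{t}{0} = (\xitl - \bx{t}{\ell}) + (\bx{t}{\ell} - \bx{t}{0})$ and using $\sumn\norm{\xitl - \bx{t}{\ell}}^2 = \norm{\x_t^\ell - \1\bx{t}{\ell}}^2 \leq \norm{V}^2\norm{\e_t^\ell}^2$ from \eqref{eq:inequality_cons}, I get $\frac{1}{n}\sumn\sum_{\ell=0}^{m-1}\norm{\xitl - \bx{t}{0}}^2 \leq \frac{2}{n}\sum_{\ell=0}^{m-1}\prt{\norm{V}^2\norm{\e_t^\ell}^2 + n\norm{\bx{t}{\ell} - \bx{t}{0}}^2} = 2\cL_t$ by the definition \eqref{eq:cLt}. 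Combining, $\frac{\alpha_t m}{2}\norm{\nabla f(\bx{t}{0}) - d_t}^2 \leq \frac{\alpha_t L^2}{2n}\sumn\sum_{\ell=0}^{m-1}\norm{\xitl - \bx{t}{0}}^2 \leq \alpha_t L^2\cL_t$, which closes the argument.

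I do not expect a genuine obstacle: this is a routine descent-lemma computation. The only point requiring care is the bookkeeping of the $m$ and $n$ factors so that the consensus and within-epoch-drift terms collapse exactly into $\cL_t$, and — more conceptually — the use of the permutation-invariance of the inner sum $\sum_\ell \nabla\fp{i}{\ell}$. This invariance is precisely the feature that allows the cross term to be absorbed entirely by consensus error and within-epoch drift, with no residual gradient-variance term of the kind that would be unavoidable for unshuffled SGD; the variance will only re-enter later, when $\norm{\bx{t}{\ell} - \bx{t}{0}}^2$ inside $\cL_t$ is bounded in \cref{lem:cL}.
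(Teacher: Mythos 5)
Your proposal is correct and follows essentially the same route as the paper, which simply invokes the consensus bound $\norm{\x_t^{\ell} - \1(\bar{x}_t^{\ell})^{\T}}^2\leq \norm{V}^2\norm{\e_t^\ell}^2$ and defers the rest to the standard descent-lemma argument of \cite[Lemma 13]{huang2023drr}: descent lemma along the one-epoch displacement, polarization of the cross term, permutation-invariance of $\sum_\ell \nabla f_{i,\pi_\ell^i}$, and the split of $\normi{x_{i,t}^\ell - \bar{x}_t^0}^2$ into consensus and within-epoch drift. (Minor remark: since $\sum_i (x_{i,t}^\ell - \bar{x}_t^\ell)=0$, your splitting step is actually an exact Pythagorean identity, so the factor of $2$ is not needed and you could even obtain $\tfrac{\alpha_t L^2}{2}\cL_t$.)
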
 
    
    \begin{proof}
        According to Remark \ref{rem:cons2e} that $\normi{\x_t^{\ell} - \Bxt{\ell}}^2\leq \normi{V}^2\normi{\e_t^\ell}^2$, the result is obtained following \cite[Lemma 13]{huang2023drr}.
    \end{proof}

    Finally, Lemma \ref{lem:final_re} presents the recursion for the Lyapunov function $Q_t$ by combining Lemmas \ref{lem:cons_e}-\ref{lem:descent}. 
        \begin{lemma}
            \label{lem:final_re}
            Let Assumptions \ref{as:graph}, \ref{as:comp_fun}, and \ref{as:abc} hold. Set the stepsize $\alpha_t$ satisfy 
            \begin{align*}
                \alpha_t\leq \min\crk{\frac{1}{4\sqrt{2} mL}, \frac{\sqrt{1-\gamma^2}}{6m^{\frac{3}{4}}\C_4^{\frac{1}{4}} L}, \frac{1-\gamma^2}{2\sqrt{mL^2\C_1}}, \frac{1-\gamma^2}{2\sqrt{6m\C_4} L}}.
            \end{align*}
            We have
            \begin{align*}
                &\condE{Q_{t + 1}}{\cF_t^0}\leq \prt{1 + \frac{8\alpha_t^3m^2 L^3\C_1}{1-\gamma^2} + \frac{96\alpha_t^3m^2L^3\C_4}{1-\gamma^2}}Q_t\\
                &\quad - \frac{\alpha_t m }{4}\norm{\nabla f(\bx{t}{0})}^2+ \frac{8\alpha_t^3m^2L^3\C_3 \sigfmn}{1-\gamma^2}
            \end{align*}
            where the constants $\C_i$'s are defined in \eqref{eq:Cs}.
        \end{lemma}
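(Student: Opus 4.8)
The plan is to combine the three recursions---\cref{lem:cons_e} for $\condE{\norm{\e_{t+1}^0}^2}{\cF_t^0}$, \cref{lem:cL} for $\condE{\cL_t}{\cF_t^0}$, and \cref{lem:descent} for $f(\bx{t+1}{0})$---into a single recursion for $Q_t = f(\bx{t}{0}) - f^* + \frac{8\alpha_t L^2\norm{V}^2}{n(1-\gamma^2)}\norm{\e_t^0}^2$. First I would take a conditional expectation $\condE{\cdot}{\cF_t^0}$ of the descent inequality in \cref{lem:descent} and substitute the bound on $\condE{\cL_t}{\cF_t^0}$ from \cref{lem:cL} into the $\alpha_t L^2\cL_t$ term. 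This produces a bound on $\condE{f(\bx{t+1}{0})}{\cF_t^0} - f^*$ in terms of $f(\bx{t}{0}) - f^*$, $\norm{\e_t^0}^2/n$, $\norm{\nabla f(\bx{t}{0})}^2$, and $(f^* - \fmn)$. The key point here is that the coefficient of $\norm{\nabla f(\bx{t}{0})}^2$ coming out of \cref{lem:cL} is $4\alpha_t^3 m^3 L^2$, which under the stepsize restriction $\alpha_t \le \tfrac{1}{4\sqrt 2 mL}$ (so that $4\alpha_t^2 m^2 L^2 \le 1/8$) is dominated by the leading $-\frac{\alpha_t m}{2}\norm{\nabla f(\bx{t}{0})}^2$ term, leaving a net coefficient of at most $-\frac{\alpha_t m}{4}$ on the gradient-norm term.

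Next I would handle the consensus part. Multiply the bound from \cref{lem:cons_e} by the weight $\frac{8\alpha_{t+1} L^2\norm{V}^2}{n(1-\gamma^2)}$ appearing in $Q_{t+1}$; since the stepsizes are nonincreasing, $\alpha_{t+1}\le\alpha_t$, so this weight is at most $\frac{8\alpha_t L^2\norm{V}^2}{n(1-\gamma^2)}$, which is what we need to produce $Q_t$ on the right-hand side. The $(\tfrac{1+\gamma^2}{2})^m \norm{\e_t^0}^2$ term multiplied by this weight is bounded by the $\norm{\e_t^0}^2$-part of $Q_t$ (indeed $(\tfrac{1+\gamma^2}{2})^m \le 1$). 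The term in \cref{lem:cons_e} involving $\condE{\cL_t}{\cF_t^0}$ must again be expanded using \cref{lem:cL}; after multiplying by the weight this contributes an $O(\alpha_t^3 m^2 L^3 \C_4/(1-\gamma^2))$-type coefficient on $\norm{\e_t^0}^2/n$, on $f(\bx{t}{0})-f^*$, and on $(f^*-\fmn)$, plus an additional $O(\alpha_t^5 m^5)$ term on $\norm{\nabla f(\bx{t}{0})}^2$ that the stepsize cap $\alpha_t\le\frac{\sqrt{1-\gamma^2}}{6 m^{3/4}\C_4^{1/4}L}$ forces to be negligible relative to $-\frac{\alpha_t m}{4}\norm{\nabla f(\bx{t}{0})}^2$. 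The $6\alpha_t^2 m^2 n L \norm{V^{-1}}^2\norm{\hLambda_a}^2[f(\bx{t}{0})-f^*]$ term, times the weight, produces a coefficient of order $\alpha_t^3 m^2 L^3\C_4/(1-\gamma^2)$ on $f(\bx{t}{0})-f^*\le Q_t$, and similarly for the $(f^*-\fmn)$ term.

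Then I would add the two assembled inequalities. On the left we get $\condE{Q_{t+1}}{\cF_t^0}$. On the right, collecting: the $f(\bx{t}{0})-f^*$ terms and the $\norm{\e_t^0}^2$ terms each come with coefficient $1$ plus lower-order corrections of the form $\frac{\alpha_t^3 m^2 L^3\C_1}{1-\gamma^2}$ and $\frac{\alpha_t^3 m^2 L^3\C_4}{1-\gamma^2}$ (here I expect to need the definition $\C_3 := 12\C_4 + \C_1$ and the explicit constants in \eqref{eq:Cs} to make the bookkeeping match the stated $\frac{8\alpha_t^3 m^2 L^3\C_1}{1-\gamma^2} + \frac{96\alpha_t^3 m^2 L^3\C_4}{1-\gamma^2}$); the gradient term has net coefficient $\le -\frac{\alpha_t m}{4}$ once the remaining stepsize bounds $\alpha_t\le\frac{1-\gamma^2}{2\sqrt{mL^2\C_1}}$ and $\alpha_t\le\frac{1-\gamma^2}{2\sqrt{6m\C_4}L}$ are invoked to kill the $\cL_t$-induced and $\e_t^0$-induced gradient contributions; and the leftover inhomogeneous term is exactly the $\frac{8\alpha_t^3 m^2 L^3\C_3}{1-\gamma^2}(f^*-\fmn)$, gathering the $(f^*-\fmn)$ contributions from \cref{lem:cons_e} (weighted), from \cref{lem:cL} (through \cref{lem:descent}), and from the $\cL_t$-expansion inside \cref{lem:cons_e}.

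The main obstacle I anticipate is the constant-chasing: each of the four stepsize bounds in the hypothesis is tailored to absorb one specific cross term (the two $\frac{1}{mL}$-type bounds for the descent and for the raw $\cL_t$ estimate, the $m^{3/4}$ bound for the $\alpha_t^5 m^5$ gradient remainder, and the two $(1-\gamma^2)$-scaled bounds for the consensus-to-gradient feedback), and I must verify that under all of them simultaneously every spurious coefficient of $\norm{\nabla f(\bx{t}{0})}^2$ sums to at most $\frac{\alpha_t m}{4}$ and every coefficient multiplying $\norm{\e_t^0}^2$ or $f(\bx{t}{0})-f^*$ collapses neatly into the claimed multiple of $Q_t$. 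This is routine but delicate; conceptually there is no difficulty once \cref{lem:cons_e}, \cref{lem:cL}, and \cref{lem:descent} are in hand, since the Lyapunov weight $\frac{8\alpha_t L^2\norm{V}^2}{n(1-\gamma^2)}$ was reverse-engineered precisely so that the $\frac{3\alpha_t^2 mn}{1-\gamma^2}\norm{V^{-1}}^2\norm{\hLambda_a}^2 L^2$ factor from \cref{lem:cons_e} times the leading $\frac{2\norm{V}^2}{1-\gamma^2}\cdot\frac{1}{n}\norm{\e_t^0}^2$ from \cref{lem:cL} reproduces (a fraction of) the same weight on $\norm{\e_t^0}^2$, yielding the self-consistent contraction structure.
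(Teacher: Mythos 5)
Your proposal is correct and follows essentially the same route as the paper's proof: substitute the bound on $\condE{\cL_t}{\cF_t^0}$ from \cref{lem:cL} into both \cref{lem:descent} and \cref{lem:cons_e}, weight the resulting consensus recursion by the Lyapunov coefficient $\tfrac{8\alpha_t L^2\norm{V}^2}{n(1-\gamma^2)}$, add, and use the four stepsize caps to absorb the cross terms into the stated coefficients (the paper merely phrases the last step as choosing an undetermined constant $\C_0$ and solving for it, rather than taking the weight as given). The constant-chasing you flag as the remaining work is exactly what the paper's appendix carries out.
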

        \begin{proof}
            See Appendix \ref{app:final_re}.
        \end{proof}

        \begin{remark}
            Generally speaking, in this work we are able to enhance the convergence guarantees for GT-RR and ED-RR compared to that of D-RR through the following four areas of improvements.
    
    Firstly, we consider the epoch-wise error, as described in Lemma \ref{lem:re_epoch}, which allows us to leverage the unbiasedness property \eqref{eq:epoch_unb} and leads to Lemma \ref{lem:cons_e} that bounds $\condEi{\normi{\e_{t+1}^0}^2}{\cF_t^0}$. Such treatment eliminates the coefficient $1/(1-\gamma^2)$ multiplying the term $(f^* - \fmn)$ in comparison with \cite[Lemma 14]{huang2023drr}. 
    
    Secondly, we conduct a more careful analysis as in Lemma \ref{lem:cL}, which results in the improved Lyapunov function $Q_t$ given in \eqref{eq:lyapunov}. Specifically, we consider the factor $1/(1-\gamma^2)$ multiplied with $\normi{\e_t^0}^2$ instead of $1/(1-\gamma^2)^2$ in the analysis of D-RR.
    
    Thirdly, we utilize the randomness introduced by RR updates as in Lemma \ref{lem:cL} in light of Lemma \ref{lem:rr} to improve the dependency of $\cL_t$ (defined in \eqref{eq:cLt}) on $m$. These two results together give rise to a better bound on $\condEi{Q_{t + 1}}{\cF_t^0}$.
    
    The above three areas of refinements are solely due to technical improvements. 
    Lastly, the algorithmic structure of GT-RR and ED-RR  contributes to the final improvement when compared to D-RR, as indicated in Remark \ref{rem:cons_drr}. In particular, the presence of the extra term $\sumn\normi{\nabla f_i(\bx{t}{0})}^2$ in D-RR degenerates its convergence rate. 

        \end{remark}

    \section{Main Results}
    \label{sec:main}

    This section presents the main theorems on the convergence rates of GT-RR and ED-RR under smooth nonconvex objective functions in Subsection \ref{subsec:nonconvex}, and the convergence rates under smooth objective functions satisfying the PL condition in Subsection \ref{subsec:PL}. Both results are established directly based on Lemma \ref{lem:final_re}.
    Additionally, we introduce an enhanced convergence rate for D-RR in Theorem \ref{thm:drr} for smooth nonconvex objective functions, which implies that our analysis techniques extend beyond those in \cite{huang2023drr}.

    \subsection{General Nonconvex Case}
    \label{subsec:nonconvex}

        In this part, we first introduce the convergence result under the unified framework given in Theorem \ref{thm:nonconvex_final_results}, which involves some unspecified network-related parameters. Then, the convergence results for GT-RR and ED-RR are presented in Corollary \ref{cor:nonconvex_DGT_RR} and Corollary \ref{cor:nonconvex_ED_RR}, respectively. Moreover, we establish an improved convergence rate for D-RR in Theorem \ref{thm:drr}.

        \begin{theorem}
        \label{thm:nonconvex_final_results}
        Let Assumptions \ref{as:graph}, \ref{as:comp_fun}, and \ref{as:abc} hold. Let the stepsize $\alpha_t = \alpha$ satisfy
        \small
         \begin{equation}
            \label{eq:alpha_ncvx}
            \begin{aligned}
               \alpha \leq \min&\crk{\prt{\frac{1-\gamma^2}{192 m^2 L^3\C_1 T}}^{\frac{1}{3}},\frac{1}{4\sqrt{2} mL}, \frac{\sqrt{1-\gamma^2}}{6m\C_4^{\frac{1}{4}} L},\right.\\
               &\left. \frac{1-\gamma^2}{2\sqrt{mL^2\C_1}}, \frac{1-\gamma^2}{2\sqrt{6m\C_4} L}}.
           \end{aligned}
         \end{equation}\normalsize
        Then, for the unified update \eqref{eq:abc_epoch} we have
        \small
        \begin{equation}
            \label{eq:nonconvex}
            \begin{aligned}
                &\min_{t=0,\ldots,T-1}\E\brk{\norm{{\nabla}f(\bar{x}_t^{0})}^2}
                \leq \frac{12(f(\bar{x}_0^0)-f^*)}{\alpha mT} + \frac{32\alpha^2m L^3\C_3}{1-\gamma^2}\sigfmn\\
                &+ \frac{96 L^2\C_4\C_b}{\norm{\hLambda_a}^2 mn(1-\gamma^2)T} + \frac{384 \alpha^2 L^3\C_4\norm{\hLambda_b^{-1}}^2[f(\bar{x}_0^0) - \frac{1}{n}\sumn f_i^*]}{m(1-\gamma^2)T},
            \end{aligned}
        \end{equation}\normalsize
        where $\C_1$, $\C_3$, $\C_4$ are given in \eqref{eq:Cs}, $\C_b:= \normi{\x_0^0 - \1(\bar{x}_0^0)^{\T}}^2 + 2\normi{\hLambda_b^{-1}}^2\normi{(I-W)\x_0^0}^2$, and $f_i^*:= \inf_{x\in\R^p} f_i(x)$.

        In addition, if we set $\beta \geq 2\sqrt{2}(1-\gamma^2) + 3\prt{\frac{(1-\gamma^2)^2\C_1}{m}}^{\frac{1}{4}} + \sqrt{\frac{\C_1}{m}} + \sqrt{\frac{6\C_4}{m}}$ and $\alpha = \frac{1}{\frac{2mL\beta}{1-\gamma^2} + \prt{\frac{192 m^2 L^3 \C_1 T}{1-\gamma^2}}^{\frac{1}{3}}},$
        then,
        \small
        \begin{equation}
            \label{eq:abc_rr}
            \begin{aligned}
                &\min\limits_{t=0,\ldots,T-1}\E\brk{\norm{{\nabla}f(\bar{x}_t^{0})}^2}
                \leq \frac{24L\beta(f(\bar{x}_0^0)-f^*)}{(1-\gamma^2)T}\\
                &+ \frac{72L\C_1^{\frac{1}{3}}(f(\bar{x}_0^0)-f^*)}{(1-\gamma^2)^{\frac{1}{3}}m^{\frac{1}{3}} T^{\frac{2}{3}}} + \frac{L\C_3\sigfmn}{\C_1^{\frac{2}{3}}(1-\gamma^2)^{\frac{1}{3}}m^{\frac{1}{3}} T^{\frac{2}{3}}}\\
                &+ \frac{12 L\C_4\norm{\hLambda_b^{-1}}^2[(f(\bar{x}_0^0)- \frac{1}{n}\sumn f_i^*)]}{\C_1^{\frac{2}{3}}m^{\frac{7}{3}}(1-\gamma^2)^{\frac{1}{3}}T^{\frac{5}{3}}}+ \frac{96 L^2\C_4\C_b}{\normi{\hLambda_a}^2 mn(1-\gamma^2)T}.
            \end{aligned}
        \end{equation}\normalsize
        
        \end{theorem}
                \begin{proof}
            See Appendix \ref{app:nonconvex_final_results}.
        \end{proof}
        

        \begin{remark}
        \label{rem:ini}
        Based on Lemma \ref{lem:abc}, for both GT-RR and ED-RR we have $\s_0^0 = (W-I)\x_0^0 + \alpha_0 W\nabla F(\1(\bar{x}_0^0)^{\T})$. Hence the term $\norm{\e_0^0}^2$ is the same for both algorithms and can be bounded as follows:
        \small
        \begin{equation}
            \label{eq:e0}
            \begin{aligned}
                &\norm{\e_0^{0}}^2 
                \leq \norm{V^{-1}}^2\left[\C_b+ 4\alpha_0^2 nL\norm{\hLambda_b^{-1}}^2\prt{f(\bar{x}_0^0) - \frac{1}{n}\sumn f_i^*}\right].
            \end{aligned}
        \end{equation}\normalsize
        \end{remark}

        We now apply Theorem \ref{thm:nonconvex_final_results} to GT-RR and ED-RR and present their convergence results in Corollary \ref{cor:nonconvex_DGT_RR} and Corollary \ref{cor:nonconvex_ED_RR}, respectively.

        \begin{corollary}[GT-RR]
            \label{cor:nonconvex_DGT_RR}
            Suppose the conditions in Theorem \ref{thm:nonconvex_final_results} hold. Let $A=W$, $B=(I-W)$, $C=W$, and set $\beta = \beta_1 = 2\sqrt{2}(1-\lambda^2) + 3\prt{\frac{42(1-\lambda^2)^2}{m}}^{\frac{1}{4}} + \sqrt{\frac{42}{m}}+ \sqrt{\frac{162}{m}}.$

            Then, the iterates generated by GT-RR satisfy
            \small
            \begin{align*}
                &\min\limits_{t=0,\ldots,T-1}\E\brk{\norm{{\nabla}f(\bar{x}_t^{0})}^2}
                \leq \frac{24L\beta_1(f(\bar{x}_0^0)-f^*)}{(1-\lambda^2)T}\\
                &\quad + \frac{674 L[f(\bar{x}_0^0) - \frac{1}{n}\sumn f_i^*]}{m^{\frac{7}{3}}(1-\lambda^2)^{\frac{7}{3}}T^{\frac{5}{3}}}+ \frac{252L(f(\bar{x}_0^0)-f^*)}{(1-\lambda^2)^{\frac{1}{3}}m^{\frac{1}{3}} T^{\frac{2}{3}}}\\
                & + \frac{762L\sigfmn}{(1-\lambda^2)^{\frac{1}{3}}m^{\frac{1}{3}} T^{\frac{2}{3}}} + \frac{2592L^2\normi{\x_0^0 - \1(\bar{x}_0^0)^{\T}}^2}{mn(1-\lambda^2)T}\\
                & + \frac{5184 L^2\normi{(I-W)\x_0^0}^2}{mn(1-\lambda^2)^3T}.
            \end{align*}\normalsize
            
        \end{corollary}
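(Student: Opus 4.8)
The plan is to derive \cref{cor:nonconvex_DGT_RR} as a direct specialization of the unified bound \eqref{eq:abc_rr} in \cref{thm:nonconvex_final_results} to the matrix triple $A=C=W$, $B=I-W$, so the only real content is (i) checking admissibility of this triple and (ii) bounding all the network-dependent constants appearing in \eqref{eq:abc_rr} in terms of $\lambda$.

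First I would verify \cref{as:abc} for $A=C=W$, $B^{2}=(I-W)^{2}$: these are degree-one polynomials in $W$; $A$ and $C$ are doubly stochastic by \cref{as:graph}; and $B\x=(I-W)\x=\0$ holds iff $\x$ lies in the eigenspace of $W$ for eigenvalue $1$, which under \cref{as:graph} is exactly the consensus subspace. By \cref{lem:abc} (with $\z_{t}^{0}=-W\x_{t}^{0}$) the resulting instance of \eqref{eq:abc_epoch} is precisely GT-RR, so \cref{thm:nonconvex_final_results} applies. I would then record the spectral data: since $A=C=W$ and $B^{2}=(I-W)^{2}$ we have $\hLambda_{a}=\hLambda_{c}=\hLambda$ and $\hLambda_{b}=I-\hLambda$ (the positive square root, well defined because every eigenvalue of $\hLambda$ is $<1$), hence $\|\hLambda_{a}\|=\lambda$ and $\|\hLambda_{b}^{-1}\|=\max_{i\ge 2}(1-\lambda_{i})^{-1}\le (1-\lambda)^{-1}$. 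The $2\times 2$ blocks building the iteration matrix $\Gamma$ of \cref{lem:re} are $\bigl(\begin{smallmatrix}2\lambda_{i}-1 & -(1-\lambda_{i})\\ 1-\lambda_{i} & 1\end{smallmatrix}\bigr)$, each with trace $2\lambda_{i}$ and determinant $\lambda_{i}^{2}$, hence a single repeated eigenvalue $\lambda_{i}$.

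Next I would carry out the constant bookkeeping. Using the (block-diagonal) construction of $V$ from \cite[Appendix B]{alghunaim2021unified} specialized to these blocks, one gets $\gamma=\|\Gamma\|\in[\lambda,1)$ together with quantitative control of $\|V\|$ and $\|V^{-1}\|$; this yields $1-\gamma^{2}\gtrsim 1-\lambda^{2}$ (using $1-\gamma^{2}\sim 1-\lambda$ from \cref{rem:gamma} and $1-\lambda\ge\tfrac12(1-\lambda^{2})$) and, for the constants of \eqref{eq:Cs}, the bounds $\C_{1}\le 42$ and $6\C_{4}\le 162$, with $\C_{3}=12\C_{4}+\C_{1}$ and $\C_{4}/\|\hLambda_{a}\|^{2}=\|V^{-1}\|^{2}\|V\|^{2}$. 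These are exactly the numbers that make the stated $\beta_{1}$ admissible: $\beta_{1}$ is obtained from the lower bound on $\beta$ required in \cref{thm:nonconvex_final_results} by replacing $1-\gamma^{2}$ by $1-\lambda^{2}$, $\C_{1}$ by $42$, and $6\C_{4}$ by $162$, each of which is an over-estimate (the first because $\gamma\ge\lambda$). I would then substitute all of this into \eqref{eq:abc_rr}, using \eqref{eq:e0} to rewrite the last term of \eqref{eq:abc_rr} in terms of $\|\x_{0}^{0}-\1(\bar{x}_{0}^{0})^{\T}\|^{2}$ and $\|(I-W)\x_{0}^{0}\|^{2}$, and simplify with the crude estimates $(1-\lambda)^{-2}\le 4(1-\lambda^{2})^{-2}$ and $\C_{4}/\C_{1}^{2/3}\le (\tfrac{2}{3})^{2/3}\C_{4}^{1/3}=O(1)$ (from $\C_{1}\ge\tfrac{3}{2}\C_{4}$). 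Collecting constants gives the six displayed terms: the $T^{-2/3}$ terms come from the $f(\bar{x}_{0}^{0})-f^{*}$ contribution and the $\C_{3}\C_{1}^{-2/3}(f^{*}-\fmn)$ contribution; the $T^{-5/3}$ term from $\C_{4}\|\hLambda_{b}^{-1}\|^{2}\C_{1}^{-2/3}(1-\gamma^{2})^{-1/3}$, which scales as $(1-\lambda^{2})^{-7/3}$; and the two $T^{-1}$ terms are the initial-error terms, the $\|(I-W)\x_{0}^{0}\|^{2}$ one carrying the extra factor $\|\hLambda_{b}^{-1}\|^{2}(1-\gamma^{2})^{-1}$ and hence the $(1-\lambda^{2})^{-3}$ dependence.

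The step I expect to be the real obstacle is the spectral bookkeeping in the previous paragraph: exhibiting an explicit $V$ under which the non-normal $\Gamma$ (its per-eigenvalue blocks being defective, so $\|\Gamma\|$ strictly exceeds its spectral radius $\lambda$) satisfies $\|\Gamma\|=\gamma<1$ with $1-\gamma^{2}=\Theta(1-\lambda^{2})$, and controlling $\|V\|$ and $\|V^{-1}\|$ tightly enough to pin down $\C_{1},\C_{3},\C_{4}$ at the claimed numerical values. Everything after that — the substitution into \eqref{eq:abc_rr}, the use of \eqref{eq:e0}, and the absorption of $(1-\lambda)$ powers into $(1-\lambda^{2})$ powers — is routine constant-chasing.
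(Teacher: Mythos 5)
Your proposal is correct and follows essentially the same route as the paper: the paper's proof of this corollary is a three-line specialization of \cref{thm:nonconvex_final_results} that simply cites \cite[Appendix B2]{alghunaim2021unified} for $\|V\|^2\le 3$, $\|V^{-1}\|^2\le 9$, $\|\hLambda_a\|^2=\lambda^2$, $\|\hLambda_b^{-1}\|^2=(1-\lambda)^{-2}$, and $\gamma=\lambda$, from which $1/3\le\C_1\le 42$, $\C_4\le 27$, $\C_3\le 366$ and the stated $\beta_1$ follow before plugging into \eqref{eq:abc_rr}. The spectral bookkeeping you single out as the real obstacle (an explicit $V$ making the defective per-eigenvalue blocks satisfy $\gamma<1$ with $1-\gamma^2=\Theta(1-\lambda^2)$ and with $\|V\|,\|V^{-1}\|$ pinned down numerically) is precisely the step the paper outsources to that reference rather than reproving.
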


        \begin{proof}
            See Appendix \ref{app:nonconvex_DGT_RR}.
        \end{proof}

        \begin{corollary}[ED-RR]
            \label{cor:nonconvex_ED_RR}
            Suppose the conditions in Theorem \ref{thm:nonconvex_final_results} hold and $W$ is positive definite. Let $A=W,B=(I-W)^{1/2},C=I$, and set $\beta = \beta_2 =  2\sqrt{2}(1-\lambda) + 3\prti{\frac{38(1-\lambda)^2}{3\underline{\lambda}m}}^{1/4} + \sqrt{\frac{38}{3\underline{\lambda}m}} + \sqrt{\frac{48}{\underline{\lambda}m}}.$

            Then, then the iterates generated by ED-RR satisfy
            \begin{align*}
                &\min_{t= 0,1,\ldots, T-1} \E\brk{\norm{\nabla f(\bx{t}{0})}^2}\leq \frac{24L\beta_2(f(\bar{x}_0^0)-f^*)}{(1-\lambda^2)T}\\
                &\quad + \frac{168L(f(\bx{0}{0}) - f^*)}{\underline{\lambda}^{\frac{1}{3}}(1-\lambda)^{\frac{1}{3}}m^{\frac{1}{3}} T^{\frac{2}{3}}} +  \frac{768L^2\norm{\x_0^0 - \1(\bar{x}_0^0)^{\T}}^2}{mn\underline{\lambda}(1-\lambda)T}\\
                &\quad + \frac{1536L^2\brk{\norm{(I-W)\x_0^0}^2}}{mn\underline{\lambda}(1-\lambda)^2T} + \frac{227L\sigfmn}{\underline{\lambda}^{\frac{1}{3}}(1-\lambda)^{\frac{1}{3}}m^{\frac{1}{3}} T^{\frac{2}{3}}}\\
                &\quad + \frac{200 L[f(\bar{x}_0^0) - \frac{1}{n}\sumn f_i^*]}{\underline{\lambda}^{\frac{1}{3}}m^{\frac{7}{3}}(1-\lambda)^{\frac{4}{3}}T^{\frac{5}{3}}}
            \end{align*}
            where $\underline{\lambda}$ is the smallest eigenvalue of $W$.
            
        \end{corollary}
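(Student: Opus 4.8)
The plan is to derive Corollary~\ref{cor:nonconvex_ED_RR} by specializing the unified bound \eqref{eq:abc_rr} of Theorem~\ref{thm:nonconvex_final_results} to the choice $A=W$, $B=(I-W)^{1/2}$, $C=I$ that recovers ED-RR (Lemma~\ref{lem:abc}), and then replacing every network-dependent quantity appearing there -- $\gamma$, $\|V\|$, $\|V^{-1}\|$, $\|\hLambda_a\|$, $\|\hLambda_b^{-1}\|$, and the constants $\C_1,\C_3,\C_4$ of \eqref{eq:Cs} -- by explicit bounds in terms of $\lambda$ and $\underline\lambda$. First I would check that $(W,(I-W)^{1/2},I)$ satisfies Assumption~\ref{as:abc}: $A=W$ and $C=I$ are doubly stochastic, $B^2=I-W$ is a polynomial in $W$ which is positive semidefinite (so $B=(I-W)^{1/2}$ is a well-defined real matrix) with $\ker B=\spa\{\1\}$, hence $B\x=0$ exactly when the rows of $\x$ coincide; thus Theorem~\ref{thm:nonconvex_final_results} applies, and the positive definiteness of $W$ assumed in the corollary will be what guarantees $\gamma<1$ below and fixes the stepsize in the stated form once $\beta=\beta_2$ is shown to dominate the required lower bound on $\beta$.

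Next I would read off the spectral data entering Lemma~\ref{lem:re}. Since $A=W$, $C=I$, $B^2=I-W$, we have $\hLambda_a=\hLambda$, $\hLambda_c=I_{n-1}$, $\hLambda_b^2=I_{n-1}-\hLambda$, so $\|\hLambda_a\|=\lambda$ (with $\lambda=\lambda_2$ because $W\succ0$) and $\|\hLambda_b^{-1}\|^2=(1-\lambda_2)^{-1}=(1-\lambda)^{-1}$, while the $2\times2$ iteration block attached to each eigenvalue $\lambda_i$ ($i\ge2$) is $\left(\begin{smallmatrix}2\lambda_i-1 & -\sqrt{1-\lambda_i}\\ \sqrt{1-\lambda_i} & 1\end{smallmatrix}\right)$, which has trace $2\lambda_i$ and determinant $\lambda_i$, hence (using $\lambda_i\in(0,1)$) a conjugate pair of eigenvalues of modulus $\sqrt{\lambda_i}$. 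Taking $V$ block-diagonal, sending each such block to its real rotation--scaling canonical form, gives $\gamma=\|\Gamma\|=\sqrt{\lambda_2}$, i.e.\ $1-\gamma^2=1-\lambda$ (consistent with Remark~\ref{rem:gamma}), with condition number $\|V\|^2\|V^{-1}\|^2\le 8/\underline\lambda$ -- the blocks degenerate to nilpotent ones as $\lambda_i\to0$, which is exactly what forces the $\underline\lambda^{-1}$; this is essentially the computation in \cite[Appendix~B]{alghunaim2021unified}. Feeding these into \eqref{eq:Cs} yields $\C_4=\lambda^2\|V\|^2\|V^{-1}\|^2\le 8/\underline\lambda$, $\C_1\le\frac23+\frac{12}{\underline\lambda}\le\frac{38}{3\underline\lambda}$ (using $\underline\lambda\le1$), and $\C_3=12\C_4+\C_1\le 9\C_1$, and these bounds also show that the stated $\beta_2$ exceeds the lower bound on $\beta$ required in Theorem~\ref{thm:nonconvex_final_results}.

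Finally I would substitute these bounds termwise into \eqref{eq:abc_rr}: $1-\gamma^2=1-\lambda$ in the $1/T$ and $T^{-2/3}$ denominators (using $1-\lambda\le1-\lambda^2$ harmlessly for the leading $1/T$ term), $\C_4/\|\hLambda_a\|^2=\|V\|^2\|V^{-1}\|^2\le 8/\underline\lambda$ in the initialization term, $\C_1^{1/3}\le(38/(3\underline\lambda))^{1/3}$ and $\C_3/\C_1^{2/3}\le 9\C_1^{1/3}$ in the $T^{-2/3}$ terms, and $\C_4\|\hLambda_b^{-1}\|^2/(\C_1^{2/3}(1-\gamma^2)^{1/3})$ bounded by a constant times $\underline\lambda^{-1/3}(1-\lambda)^{-4/3}$ in the $T^{-5/3}$ term; the initialization quantity $\|\e_0^0\|^2$ is then expanded through \eqref{eq:e0}, producing the two terms in $\|\x_0^0-\1(\bar x_0^0)^\T\|^2$ and $\|(I-W)\x_0^0\|^2$ and contributing its $\alpha_0^2$-parts to the $T^{-5/3}$ function-value term, and collecting numerical constants gives the stated inequality. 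I expect the only genuinely non-mechanical step to be the second one -- establishing $1-\gamma^2\sim1-\lambda$ together with the sharp $\|V\|^2\|V^{-1}\|^2=O(1/\underline\lambda)$ for the non-normal ED iteration matrix -- which is exactly where positive definiteness of $W$ and the parameter $\underline\lambda$ enter; everything afterwards is substitution and constant bookkeeping.
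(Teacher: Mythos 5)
Your proposal is correct and follows essentially the same route as the paper: specialize Theorem~\ref{thm:nonconvex_final_results} via Lemma~\ref{lem:abc} with $A=W$, $B=(I-W)^{1/2}$, $C=I$, bound $\gamma=\sqrt{\lambda}$, $\|\hLambda_a\|^2=\lambda^2$, $\|\hLambda_b^{-1}\|^2=(1-\lambda)^{-1}$, $\|V\|^2\|V^{-1}\|^2\le 8/\underline{\lambda}$, hence $\C_4\le 8/\underline{\lambda}$, $\C_1\le 38/(3\underline{\lambda})$, $\C_3\le 326/(3\underline{\lambda})$, and substitute into \eqref{eq:abc_rr} together with the bound \eqref{eq:e0} on $\|\e_0^0\|^2$. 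The only cosmetic difference is that you derive the $2\times 2$ block spectral data (and the role of positive definiteness of $W$ in keeping the blocks diagonalizable) explicitly, whereas the paper simply cites \cite[Appendix B.2]{alghunaim2021unified} for those quantities.
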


        \begin{proof}
            See Appendix \ref{app:nonconvex_ED_RR}.            
        \end{proof}

        \begin{remark}
            If we initialize all the agents' iterates at the same point, i.e., $\x_0^0=\1(x_0)^{\T}$ for some $x_0 \in \R^p$, and set $W$ to be $(1-\tau)W + \tau I$ (with a slight abuse of notation) for some $\tau\in (0,1)$, then the iterates generated by GT-RR satisfy the simplified inequality below:
            \small
            \begin{align*}
                &\min_{t= 0,1,\ldots, T-1} \E\brk{\norm{\nabla f(\bx{t}{0})}^2}
                \leq \order{\frac{f(x_{0}) - f^*}{(1-\lambda)^{\frac{1}{3}}m^{\frac{1}{3}} T^{\frac{2}{3}}}\right.\\
                &\left. +\frac{f^* - \frac{1}{n}\sumn f_i^*}{m^{\frac{7}{3}}(1-\lambda)^{\frac{7}{3}}T^{\frac{5}{3}}} + \frac{f(x_0) - f^*}{(1-\lambda)T} + \frac{\sigfmn}{(1-\lambda)^{\frac{1}{3}}m^{\frac{1}{3}} T^{\frac{2}{3}}}}.
            \end{align*}\normalsize
            Similarly, the iterates generated by ED-RR satisfy
            \small
            \begin{align*}
                &\min_{t= 0,1,\ldots, T-1} \E\brk{\norm{\nabla f(\bx{t}{0})}^2}
                \leq \order{\frac{f(x_{0}) - f^*}{(1-\lambda)^{\frac{1}{3}}m^{\frac{1}{3}} T^{\frac{2}{3}}}\right.\\
                &\left. +\frac{f^* - \frac{1}{n}\sumn f_i^*}{m^{\frac{7}{3}}(1-\lambda)^{\frac{4}{3}}T^{\frac{5}{3}}} + \frac{f(x_0) - f^*}{(1-\lambda)T} + \frac{\sigfmn}{(1-\lambda)^{\frac{1}{3}}m^{\frac{1}{3}} T^{\frac{2}{3}}}}.
            \end{align*}\normalsize
            These results are corresponding to those listed in Table \ref{tab:summary_ncvx}.

            In addition, for both GT-RR and ED-RR, the sample complexity $mT$ required by each agent to ensure $\min_{t= 0,1,\ldots, T-1} \E\brki{\normi{\nabla f(\bx{t}{0})}^2}\leq \varepsilon^2$ is $\orderi{\frac{\sqrt{m }}{\varepsilon^3\sqrt{1-\lambda}}+ \frac{m }{\varepsilon^2 (1-\lambda)}}$ for small enough $\varepsilon$, that is, $\varepsilon\leq \orderi{m^{7/5}\sqrt{1-\lambda}}$. Such a sample complexity is comparable to that of the centralized RR algorithm in \cite{mishchenko2020random}, up to the network topology coefficient $1/(1-\lambda)$.
        \end{remark}


        In Theorem \ref{thm:drr} below, we present an improved convergence result for D-RR, which serves to corroborate the superiority of the analysis in this work compared to those in \cite{huang2023drr}.

        \begin{theorem}
            \label{thm:drr}
            Let Assumptions \ref{as:graph} and \ref{as:comp_fun} hold, and let the stepsize $\alpha_t = \alpha$ satisfy 
            $\alpha\leq \min\crki{\prti{1-\lambda^2}/\prti{4mL\sqrt{14}}, \brki{\prti{(1-\lambda^2)^2}/\prti{384m^2L^3 T}}^{\frac{1}{3}}}.$

            We have for D-RR in \cite{huang2023drr} that
            \begin{align*}
                &\min_{t = 0,1,\ldots, T-1}\E\brk{\norm{\nabla f(\bx{0}{0})}^2} \leq \frac{12\prt{f(\bx{0}{0}) - f^*}}{\alpha m T}\\
                &\quad + \frac{96L^2\norm{\x_0^0 - \1(\bar{x}_0^0)^{\T}}^2}{mn(1-\lambda^2)T}+ \frac{984\alpha^2 m L^3 \sigfmn}{(1-\lambda^2)^2}.
            \end{align*}

            In addition, if the stepsize is set as $\alpha = 1/\crki{\prti{4\sqrt{14}mL}/\prti{1-\lambda^2} + \brki{\prti{384m^2 L^3 T}/\prti{(1-\lambda^2)^2}}^{1/3}},$
            then the iterates generated by D-RR satisfy
            \begin{align*}
                &\min_{t = 0,1,\ldots, T-1}\E\brk{\norm{\nabla f(\bx{t}{0})}^2} \leq \frac{88L(f(\bx{0}{0}) - f^*)+20L\sigfmn}{(1-\lambda^2)^{\frac{2}{3}} m^{\frac{1}{3}}T^{\frac{2}{3}}}\\
                &+ \frac{96L^2\norm{\x_0^0 - \1(\bar{x}_0^0)^{\T}}^2}{mn(1-\lambda^2)T}+ \frac{48\sqrt{14} L(f(\bar{x}_0^0) - f^*)}{(1-\lambda^2)T}.
            \end{align*}
        \end{theorem}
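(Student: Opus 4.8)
The plan is to re-run the preliminary analysis of \cref{sec:preliminary} for the special case of D-RR, whose epoch update is $\x_t^{\ell+1}=W\big(\x_t^\ell-\alpha\nabla\Fp{\ell}(\x_t^\ell)\big)$, i.e.\ the unified update \eqref{eq:abc_epoch} with $A=W$, $C=I$, $B=\0$. Since this choice violates \cref{as:abc}, the $\e_t$-framework of \cref{lem:re}--\cref{lem:final_re} is unavailable, so we instead work directly with the consensus error $\norm{\x_t^\ell-\1(\bx{t}{\ell})^\T}$ and exploit the spectral contraction $\norm{(W-\1\1^\T/n)^k}\le\lambda^k$ (hence powers of $\lambda^2$). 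The Lyapunov function is taken to be
\[
\Qdrr_t := f(\bx{t}{0}) - f^* + c\,\alpha L^2\,\norm{\x_t^0-\1(\bx{t}{0})^\T}^2
\]
for a suitable absolute constant $c$, mirroring \eqref{eq:lyapunov} with $\norm{V}^2\norm{\e_t^0}^2$ replaced by the consensus error and $1-\gamma^2$ replaced by $1-\lambda^2$.

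The first step is to bound $\condE{\norm{\x_{t+1}^0-\1(\bx{t+1}{0})^\T}^2}{\cF_t^0}$ starting from the epoch-wise recursion \eqref{eq:cons_epoch_drr} of \cref{rem:cons_drr}. I would split its right-hand side into four groups: the contracted initial error $(W-\1\1^\T/n)^{m}(\x_t^0-\Bxt{0})$; the consensus-drift term (increments $\nabla\Fp{j}(\x_t^j)-\nabla\Fp{j}(\Bxt{0})$), handled by $L$-smoothness to produce $\Ldrr_t$; the sampling term (increments $\nabla\Fp{j}(\Bxt{0})-\nabla F(\Bxt{0})$), where the epoch-wise unbiasedness \eqref{eq:epoch_unb} together with \cref{lem:rr} buys the factor $1/m$ over the naive estimate; and the heterogeneity term $\sum_j(W-\1\1^\T/n)^{\ell-j+1}\nabla F(\Bxt{0})$, which is bounded via $\norm{\nabla F(\Bxt{0})}^2=\sumn\norm{\nabla f_i(\bx{t}{0})}^2\le 2nL(f(\bx{t}{0})-f^*)+2nL(f^*-\tfrac1n\sumn f_i^*)$ (using $L$-smoothness and lower boundedness of each $f_i$, in the spirit of \cref{lem:bounded_var}). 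Summing the geometric weights $\sum_j\lambda^{\ell-j+1}$ and applying Cauchy--Schwarz yields factors of $1/(1-\lambda^2)$; crucially, it is the heterogeneity term that costs one \emph{extra} such factor relative to \cref{lem:cons_e}, which is exactly why D-RR only attains $(1-\lambda^2)^{2/3}$ in the leading rate rather than $(1-\lambda^2)^{1/3}$.

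Next I would derive the analogue of \cref{lem:cL}: a bound on $\condE{\Ldrr_t}{\cF_t^0}$ in terms of $\norm{\x_t^0-\1(\bx{t}{0})^\T}^2$, $\norm{\nabla f(\bx{t}{0})}^2$, $f(\bx{t}{0})-f^*$ and $f^*-\fmn$ (the $\bx{t}{\ell}-\bx{t}{0}$ part of $\Ldrr_t$ being controlled exactly as in \cref{lem:cL} via $\bx{t}{\ell+1}=\bx{t}{\ell}-\alpha\,\bar\nabla\Fp{\ell}(\x_t^\ell)$ and \cref{lem:bounded_var}), valid for $\alpha\lesssim(1-\lambda^2)/(mL)$. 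The descent inequality of \cref{lem:descent} carries over with $\Ldrr_t$ in place of $\cL_t$ since it only uses $\bx{t+1}{0}=\bx{t}{0}-\tfrac{\alpha}{n}\sumn\sum_\ell\nabla\fp{i}{\ell}(x_{i,t}^\ell)$ and the consensus bound. Combining the three recursions under the stated $\alpha\le\frac{1-\lambda^2}{4mL\sqrt{14}}$ gives the one-step inequality
\[
\condE{\Qdrr_{t+1}}{\cF_t^0}\le\Big(1+\order{\tfrac{\alpha^3m^2L^3}{(1-\lambda^2)^2}}\Big)\Qdrr_t-\frac{\alpha m}{4}\norm{\nabla f(\bx{t}{0})}^2+\order{\tfrac{\alpha^3m^2L^3}{(1-\lambda^2)^2}}\,(f^*-\fmn).
\]

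Finally, taking full expectations, telescoping over $t=0,\dots,T-1$, and using $\alpha^3\le(1-\lambda^2)^2/(384m^2L^3T)$ to absorb the multiplicative factor as $(1+\order{\cdot})^T\le\order{1}$, I obtain
\[
\min_{t}\E\norm{\nabla f(\bx{t}{0})}^2\le\frac{\order{f(\bx{0}{0})-f^*}}{\alpha mT}+\frac{\order{L^2\norm{\x_0^0-\1(\bx{0}{0})^\T}^2}}{mn(1-\lambda^2)T}+\order{\frac{\alpha^2mL^3(f^*-\fmn)}{(1-\lambda^2)^2}},
\]
which is the first displayed bound (tracking the absolute constants $12$, $96$, $984$). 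Substituting the explicit $\alpha=1/\big(\tfrac{4\sqrt{14}mL}{1-\lambda^2}+(\tfrac{384m^2L^3T}{(1-\lambda^2)^2})^{1/3}\big)$ and noting that $1/(\alpha mT)$ splits into the $\frac{1}{(1-\lambda^2)T}$ and $\frac{1}{(1-\lambda^2)^{2/3}m^{1/3}T^{2/3}}$ pieces, while $\alpha^2mL^3/(1-\lambda^2)^2\lesssim L/[(1-\lambda^2)^{2/3}m^{1/3}T^{2/3}]$, gives the second displayed bound. The main obstacle is the bookkeeping of the powers of $(1-\lambda^2)$ in the heterogeneity term of \eqref{eq:cons_epoch_drr}: one must verify it costs only one extra factor $1/(1-\lambda^2)$ (so that $(1-\lambda^2)^{2/3}$, and not the $(1-\lambda^2)^{3}$ of \cite{huang2023drr}, is recovered) while simultaneously confirming that the $m^{1/3}$ improvement survives, which hinges on applying \cref{lem:rr} only to the conditionally-mean-zero sampling part of the gradient increments.
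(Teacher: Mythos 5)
Your overall architecture --- the Lyapunov function $\Qdrr_t=f(\bx{t}{0})-f^*+\order{\alpha L^2/[n(1-\lambda^2)]}\norm{\x_t^0-\1(\bx{t}{0})^{\T}}^2$, the descent lemma with $\Ldrr_t$, the \cref{lem:rr}-based control of $\sum_{\ell}\norm{\bx{t}{\ell}-\bx{t}{0}}^2$ as in \eqref{eq:cL_bar}, and the telescoping via \cref{lem:hhelp} --- coincides with the paper's proof in \cref{app:thm_drr}. The gap is exactly where you depart from it: you propose to control the epoch-to-epoch consensus error $\condE{\norm{\x_{t+1}^0-\1(\bx{t+1}{0})^{\T}}^2}{\cF_t^0}$ from the epoch-wise recursion \eqref{eq:cons_epoch_drr}, and you assert that the heterogeneity term $\alpha\sum_{j=0}^{\ell}(W-\1\1^{\T}/n)^{\ell-j+1}\nabla F(\Bxt{0})$ costs ``only one extra factor $1/(1-\lambda^2)$.'' Under the bounding strategy you state (summing the geometric weights and applying Cauchy--Schwarz), its squared norm is controlled by $\prt{\sum_{j}\lambda^{\ell-j+1}}^2\norm{\nabla F(\Bxt{0})}^2\lesssim (1-\lambda)^{-2}\,nL\brk{(f(\bx{t}{0})-f^*)+(f^*-\frac{1}{n}\sumn f_i^*)}$ --- two inverse powers of the spectral gap and, unlike the sampling term in \cref{lem:cons_e}, no compensating $m^2$. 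Fed through the Lyapunov coefficient $\order{\alpha L^2/[n(1-\lambda^2)]}$ this produces a term of order $\alpha^3L^3/(1-\lambda^2)^3$ in the $\Qdrr_t$ recursion, which is not dominated by the required $\alpha^3m^2L^3/(1-\lambda^2)^2$ unless $m^2(1-\lambda^2)\geq 1$; for poorly connected graphs your route therefore yields a rate of order $1/[(1-\lambda^2)mT^{2/3}]$ rather than the claimed $1/[(1-\lambda^2)^{2/3}m^{1/3}T^{2/3}]$. This is precisely the dead end the paper flags before \eqref{eq:epoch_drr}: ``considering the epoch-wise error of $\norm{\x_t^0-\Bxt{0}}^2$ according to \cref{rem:cons_drr} would not improve the order of $(1-\lambda)$.''

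The paper's proof is a hybrid: for the epoch-to-epoch consensus error it keeps the \emph{single-iteration} recursion of \cite[Lemma 14]{huang2023drr}, under which the heterogeneity enters as $\frac{6\alpha^2mn}{1-\lambda^2}\norm{\nabla f(\bx{t}{0})}^2+\frac{12L\alpha^2mn}{1-\lambda^2}(\cdots)$ --- one inverse spectral gap and a factor $m$, hence the benign $\alpha^3mL^3/(1-\lambda^2)^2$ after multiplying by the Lyapunov coefficient --- while the single extra $1/(1-\lambda^2)$ relative to GT-RR/ED-RR arises from the inner-loop sum \eqref{eq:drr_sum} feeding $\Ldrr_t$ into the descent lemma. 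The only genuinely new ingredients relative to \cite{huang2023drr} are the sharper Lyapunov coefficient ($1/(1-\lambda^2)$ in place of $1/(1-\lambda^2)^2$) and the \cref{lem:rr}-based bound \eqref{eq:cL_bar}, which is where the $m^{1/3}$ gain lives. To repair your argument, either replace your first step by that original single-step recursion, or at minimum bound the partial geometric sum by $\ell+1\leq m$ rather than by $\lambda/(1-\lambda)$ so that the heterogeneity contribution becomes $\order{\alpha^2m^2n L/(1-\lambda^2)}\brk{(f(\bx{t}{0})-f^*)+(f^*-\frac{1}{n}\sumn f_i^*)}$; as written, the key quantitative claim of your plan does not hold and the stated theorem is not recovered.
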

        \begin{proof}
            See Appendix \ref{app:thm_drr}.
        \end{proof}
        \begin{remark}
            \label{rem:drr}
            It is noteworthy that D-RR can attain a convergence rate of $\orderi{1/[m^{1/3}(1-\lambda)^{2/3}T^{2/3}]}$ for large $T$ according to Theorem \ref{thm:drr}, which improves upon the previous convergence rate of $\orderi{1/[(1-\lambda) T^{2/3}]}$ given in \cite[Theorem 4]{huang2023drr}.
            The improved convergence rate is evident in terms of the sample size $m$ and the coefficient $(1-\lambda)$ related to the network topology.
        \end{remark}

        We summarize the implications and significance of the findings stated in this section. Theorems \ref{thm:nonconvex_final_results} and \ref{thm:drr} present the first results indicating that distributed random reshuffling methods over networks can achieve convergence rates comparable to that of the centralized RR method, i.e., $\orderi{1/(m^{1/3}T^{2/3})}$, concerning the sample size $m$ and the epoch number $T$. Notably, such a result outperforms that of SGD when high accuracy is demanded. 
        In addition, Corollaries \ref{cor:nonconvex_DGT_RR} and \ref{cor:nonconvex_ED_RR} highlight the enhanced convergence rate of $\orderi{1/[(1-\lambda)^{1/3}m^{1/3}T^{2/3}]}$ for GT-RR and ED-RR, which relieves the impact of the network topology compared to D-RR.

\subsection{PL Condition Case}
\label{subsec:PL}

    In this part, we introduce the convergence results of GT-RR and ED-RR under the PL condition. Similar to the general nonconvex case, we first present the convergence result under the unified framework in Theorem \ref{thm:linear}. Then, the specific convergence results for GT-RR and ED-RR are outlined in Corollary \ref{cor:PL_DGT_RR} and Corollary \ref{cor:PL_ED_RR}, respectively. In contrast to the recursion given in Lemma \ref{lem:cons_e}, we introduce a new recursion for the term $\normi{\e_t^0}^2$ in Lemma \ref{lem:cons_e_m} that retains a coefficient $\C(\gamma, m):=\sum_{j=0}^{m-1}(\gamma^2)^{m-1-j}$. This modification allows us to avoid an additional $1/(1-\gamma^2)$ term after the unrolling process, resulting in an improved order of $m$.

    \begin{lemma}
		\label{lem:cons_e_m}
		Let Assumptions \ref{as:graph}, \ref{as:comp_fun}, and \ref{as:abc} hold. Let the stepsize $\alpha_t$ satisfy $\alpha_t \leq \min\crki{\prti{1-\gamma^2}/\prti{\sqrt{24mL^2 \C_1}}, \prti{1-\gamma^2}/\prti{\sqrt{4m^2L^2\C_1}}}.$
        Then, 
        \small
		\begin{align*}
            &\condE{\norm{\e_{t+1}^{0}}^2}{\cF_t^0} \leq q_0\norm{\e_t^0}^2+  \frac{24\alpha_t^4m^4n L^3\C_4}{\normi{V}^2(1-\gamma^2)}\brk{f(\bx{t}{0}) - f^*}\\
            &+ \frac{6\alpha_t^2mnL\C_4\prt{\C(\gamma, m) + 1}}{\normi{V}^2}\brk{f(\bx{t}{0}) - f^* + \sigfmn},
        \end{align*}\normalsize
        where $\C(\gamma, m):=\sum_{j=0}^{m-1}(\gamma^2)^{m-1-j}$ and $q_0:= \crki{1 + [(1+\gamma^2)/2]^m}/2$.
	\end{lemma}
    
    \begin{proof}
        See Appendix \ref{app:cons_e_m}.
    \end{proof}
    
    The convergence results under the unified framework, incorporating both a decreasing stepsize $\alpha_t = \theta/[\mu m(t+K)]$, as well as a constant stepsize $\alpha_t = \alpha$, are presented in Theorem \ref{thm:linear}. These results are established when the objective functions satisfy the PL condition.

    \begin{theorem}
        \label{thm:linear}
        Suppose Assumptions \ref{as:graph}-\ref{as:abc} are valid. 
        If we consider a constant stepsize $\alpha_t = \alpha$ satisfying 
        \small
        \begin{align*}
            \alpha\leq \min&\crk{\sqrt{\frac{\mu(1-\gamma^2)}{768 mL^3 \C_1}}, \frac{1-\gamma^2}{\sqrt{24m^2L^2\C_1}},{\frac{1-\gamma^2}{m\mu}},\frac{\sqrt{1-\gamma^2}}{6m^{\frac{3}{4}}\C_1^{\frac{1}{4}} L}},
        \end{align*}\normalsize
        then for the unified update \eqref{eq:abc_epoch}, we have
        \begin{align*}
            &\frac{1}{n}\sumn \E\brk{f(x_{i,t}^0) - f^*}
            \leq \frac{L\norm{V}^2}{n}q_0^{t} \norm{\e_0^0}^2\\
            &\quad + \brk{2\prt{1 - \frac{\alpha\mu m}{4}}^{t} + \frac{48\alpha^2 mL^2\C_4}{1-\gamma^2} + \frac{96\alpha^4m^4 L^4 \C_4}{(1-\gamma)^2} }Q_0\\
            &\quad + \frac{16\alpha^2 m L^2\kappa}{1-\gamma^2}\brk{(4\C_1 + 69\C_4) + \frac{42\alpha^2 m^3 L\mu \C_4}{1-\gamma^2}}\sigfmn.
        \end{align*}

        If decreasing stepsize $\alpha_t = \theta/(m\mu (t + K))$ is considered, where $\theta > 16$, $\kappa:= L/\mu$, and
        \small
        \begin{align*}
            K\geq \max&\crk{\sqrt{\frac{768\theta^2 \C_1\kappa^3}{m(1-\gamma^2)}}, \sqrt{\frac{12\theta^2\kappa^2\C_1}{(1-\gamma^2)^2}}, {\frac{2\theta}{1-\gamma^2}}, \frac{6\theta \C_1^{\frac{1}{4}}\kappa}{m^\frac{1}{4}\sqrt{1-\gamma^2}}},
        \end{align*}\normalsize
        then
        \small
        \begin{align*}
            &\frac{1}{n}\sumn \E\brk{f(x_{i,t}^0) - f^*} 
            \leq \frac{\norm{V}^2 L }{n}q_0^{t}\norm{\e_{0}^0}^2 + \brk{2\prt{\frac{K}{t + K}}^{\frac{\theta}{4}}\right.\\
            &\left.+ \frac{144\theta^2\kappa^2 \C_4}{m(1-\gamma^2)(t + K)^2} + \frac{96\theta^4\kappa^4 \C_4}{(1-\gamma^2)^2 (t + K)^4}}Q_0\\
            &+ \brk{8\kappa\C_3 + 21\C_4 + \frac{21\kappa^2 \theta^2\C_4}{(1-\gamma^2)(t + K)^2}}\frac{32\theta^2 \kappa^2\sigfmn}{m(1-\gamma^2)(t + K)^2},
        \end{align*}\normalsize
        where $\normi{\e_{0}^0}^2$ is upper bounded in \eqref{eq:e0}.
    \end{theorem}
        \begin{proof}
        See Appendix \ref{app:linear}.
    \end{proof}

    \begin{remark}
        According to Theorem \ref{thm:linear}, the expected error term, i.e., $1/n\sumn \E\brki{f(x_{i,t}^0) - f^*}$, decreases exponentially fast to a neighborhood of zero with size $\orderi{m\alpha^2/(1-\gamma^2)}$ when a constant stepsize $\alpha$ is used. Moreover, the error term converges to zero at a rate of $\orderi{1/[m(1-\gamma^2)T^2]}$ under the decreasing stepsize policy.
    \end{remark}


    The specialized convergence results for GT-RR and ED-RR are presented below. The proofs are omitted as they are straightforward when the constants related to the choice of $A, B,$ and $C$ are specified for GT-RR in Corollary \ref{cor:nonconvex_DGT_RR} and for ED-RR in Corollary \ref{cor:nonconvex_ED_RR}.
    
    \begin{corollary}[GT-RR]
        \label{cor:PL_DGT_RR}
        Let the conditions in Theorem \ref{thm:linear} hold. Let $A=W$, $B=(I-W)$, $C=W$. 
        Under a constant stepsize $\alpha_t = \alpha$,
        for GT-RR we have
        \small
        \begin{align*}
            &\frac{1}{n}\sumn \E\brk{f(x_{i,t}^0) - f^*}\\
            &\leq \brk{2\prt{1 - \frac{\alpha\mu m}{4}}^{t} + \frac{2016\alpha^2 mL^2}{1-\lambda^2} + \frac{4032\alpha^4m^4 L^4 \C_4}{(1-\lambda^2)^2} }Q_0\\
            &\quad + \frac{3L q_0^{t} \norm{\e_0^0}^2}{n}+ \frac{16\alpha^2 m L^2\kappa}{1-\lambda^2}\brk{2031 + \frac{1134\alpha^2 m^3 L\mu }{1-\lambda^2}}\sigfmn.
        \end{align*}\normalsize

        Under decreasing stepsize $\alpha_t = \theta/(m\mu (t + K))$, 
        for GT-RR we have
        \small
        \begin{align*}
            &\frac{1}{n}\sumn \E\brk{f(x_{i,t}^0) - f^*} 
            \leq \frac{\norm{V}^2 L }{n}q^{t}\norm{\e_{0}^0}^2 + \brk{2\prt{\frac{K}{t + K}}^{\frac{\theta}{4}}\right.\\
            &\left. + \frac{3888\theta^2\kappa^2 }{m(1-\lambda^2)(t + K)^2} + \frac{2592\theta^4\kappa^4 }{(1-\lambda^2)^2 (t + K)^4}}Q_0\\
            &+ \brk{2928\kappa + 567 + \frac{567 m \kappa^2 \theta^2}{(1-\lambda^2)(t + K)^2}}\frac{32\theta^2 \kappa^2\sigfmn}{m(1-\lambda^2)(t + K)^2}.
        \end{align*}\normalsize
        
    \end{corollary}

        

    \begin{corollary}[ED-RR]
    \label{cor:PL_ED_RR}
        Let the conditions in Theorem \ref{thm:linear} hold. Let $A=W ,B=(I-W)^{1/2},C=I$. 
        If we use a constant stepsize $\alpha_t = \alpha$,
        then for ED-RR we have 
        \begin{align*}
            &\frac{1}{n}\sumn \E\brk{f(x_{i,t}^0) - f^*}
            \leq  \frac{L\norm{V}^2}{n}q_0^{t} \norm{\e_0^0}^2 \\
            &\quad + \brk{2\prt{1 - \frac{\alpha\mu m}{4}}^{t} + \frac{384\alpha^2 mL^2}{\underline{\lambda}(1-\lambda)} + \frac{768\alpha^4m^4 L^4}{\underline{\lambda}(1-\lambda)^2} }Q_0\\
            &\quad + \frac{16\alpha^2 m L^2\kappa}{1-\lambda}\brk{\frac{604}{\underline{\lambda}} + \frac{336\alpha^2 m^3 L\mu }{\underline{\lambda}(1-\lambda)}}\sigfmn
        \end{align*}

        If we use decreasing stepsize $\alpha_t = \theta/(m\mu (t + K))$, 
        then for ED-RR we have 
        \small
        \begin{align*}
            &\frac{1}{n}\sumn \E\brk{f(x_{i,t}^0) - f^*} 
            \leq \frac{q_0^{t} \norm{V}^2 L \normi{\e_{0}^0}^2}{n} + \brk{2\prt{\frac{K}{t + K}}^{\frac{\theta}{4}}\right.\\
            &\left.+ \frac{1152\theta^2\kappa^2 }{m\underline{\lambda}(1-\lambda)(t + K)^2} + \frac{768\theta^4\kappa^4}{\underline{\lambda}(1-\lambda)^2 (t + K)^4}}Q_0\\
            &+ \brk{\frac{870\kappa}{\underline{\lambda}} + \frac{168}{\underline{\lambda}} + \frac{168m \kappa^2 \theta^2}{\underline{\lambda}(1-\lambda)(t + K)^2}}\frac{32\theta^2 \kappa^2\sigfmn}{m(1-\lambda)(t + K)^2}.
        \end{align*}\normalsize
    \end{corollary}

    \begin{remark}
        If all the agents' iterates are initialized at the same point, i.e., $\x_0^0=\1x_0^{\T}$ for some $x_0 \in \R^p$, then we have from \eqref{eq:e0} that $Q_0 = f(x^0) - f^* + \frac{32\alpha_0^3L^3\normi{V}^2\normi{V^{-1}}^2 \normi{\hLambda_b^{-1}}^2}{1-\gamma^2}\brki{f(x^0) - \frac{1}{n}\sumn f_i^*}$
        which implies that $Q_0 \sim \orderi{f(x^0) - 1/n\sumn f_i^*}$ for GT-RR and ED-RR. If we further set $W$ to be $(1-\tau)W+\tau I$ for some $\tau\in(0,1)$, then the iterates generated by GT-RR or ED-RR with a constant stepsize $\alpha$ satisfy
       \small
        \begin{align*}
            &\frac{1}{n}\sumn \E\brk{f(x_{i,T}^0) - f^*}\leq \order{\frac{\alpha^2 m \prti{f(x^0) - f^*}}{1-\lambda} + \frac{\alpha^4 m^4\sigfmn}{(1-\lambda)^2}}\\
            &\quad + \order{\frac{\alpha^2 m \sigfmn}{1-\lambda} + \frac{\alpha^4 m^4\prti{f(x^0) - f^*}}{(1-\lambda)^2}},
        \end{align*}\normalsize
       and the iterates generated by GT-RR or ED-RR using decreasing stepsize $\alpha_t = \theta/(\mu m(t + K))$ satisfy
       \small
        \begin{align*}
            &\frac{1}{n}\sumn \E\brk{f(x_{i,T}^0) - f^*}\leq \order{\frac{f(x^0) - f^*}{m(1-\lambda) T^2} + \frac{\sigfmn}{(1-\lambda)^2 T^4}}\\
            &\quad + \order{\frac{\sigfmn}{(1-\lambda)mT^2} + \frac{f(x^0) - f^*}{(1-\lambda)^2T^4}}.
        \end{align*}\normalsize
        The above results are corresponding to those listed in Table \ref{tab:summary_PL}. 
        Moreover, for both choices of the stepsizes, the sample complexity required by each agent to ensure $1/n\sumn \E\brki{f(x_{i,t}^0) - f^*}\leq \varepsilon^2$ is in the order of $\orderi{ \frac{\sqrt{m}}{\varepsilon \sqrt{1-\lambda}}+ \frac{m}{\sqrt{\varepsilon(1-\lambda)}}}$.
    \end{remark}

    We summarize the implications and significance of the findings stated in this section. Corollaries \ref{cor:PL_DGT_RR} and \ref{cor:PL_ED_RR} demonstrate that both GT-RR and ED-RR can achieve the convergence rate of $\orderi{1/[(1-\lambda)mT^2]}$, which is comparable to that of centralized RR up to the graph-related constant $(1-\lambda)$. 
    Compared to the convergence rate $\orderi{1/[(1-\lambda)^3 mT^2]}$ of D-RR, such a result improves the dependence on $(1-\lambda)$ significantly. Notably, the term $(1-\lambda)$ can behave as $\orderi{1/n^2}$ for sparse graphs like rings. Moreover, such an enhancement is derived under the PL condition, which covers a wider range of application compared to the strongly convex case considered in D-RR \cite{huang2023drr}.

    \section{Numerical Results}
    \label{sec:sims}

        This section presents two numerical examples that illustrate the performance of the proposed GT-RR and ED-RR algorithms compared with existing methods. We evaluate these algorithms on a strongly convex problem \eqref{eq:ls} (satisfying the PL condition) and for training a neural network to verify the theoretical findings. In general, the results demonstrate the superiority of GT-RR and ED-RR over their counterparts using unshuffled stochastic gradients (DSGT and ED) for sufficiently large $T$. Unlike D-RR which relies on small stepsizes to outperform unshuffled methods, GT-RR and ED-RR work under relatively large stepsizes. We acknowledge that some codes are from \cite{qureshi2020s}. All the results are averaged over $10$ repeated runs. We denote $x_{i,t}:=x_{i,t}^0$ and $\bar{x}_t:=\bar{x}_t^0$ for simplicity.

        The considered graph topologies are shown in Fig. \ref{fig:graph}.

		\begin{figure}[htbp]
			\centering
			\subfloat[Ring graph, $n = 16$.]{\includegraphics[width=0.245\textwidth]{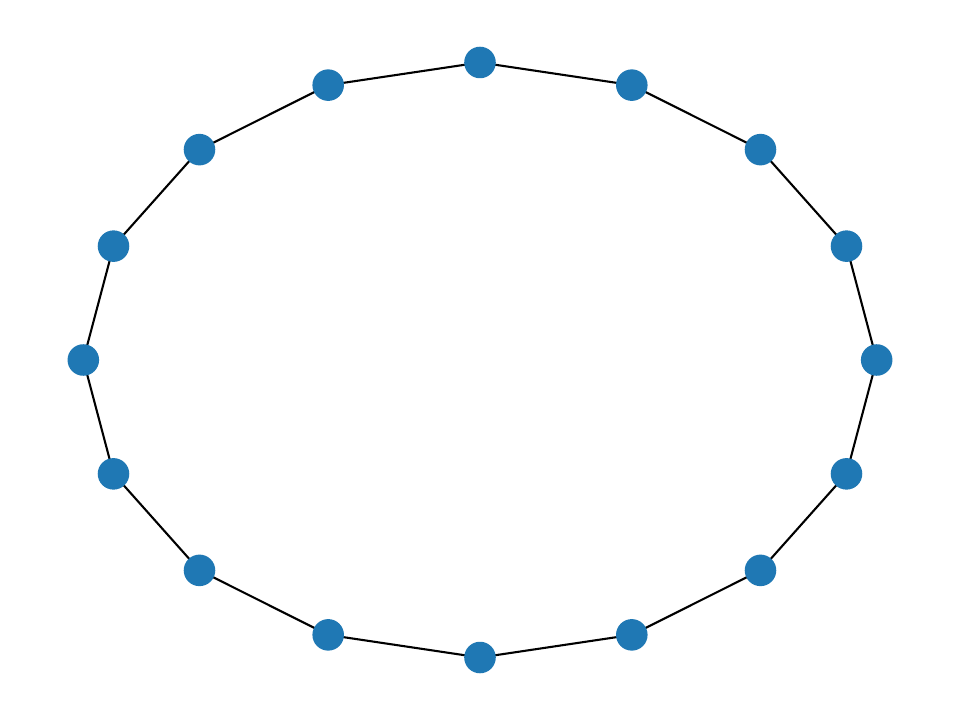}}
            \subfloat[Grid graph, $n = 16$.]{\includegraphics[width=0.245\textwidth]{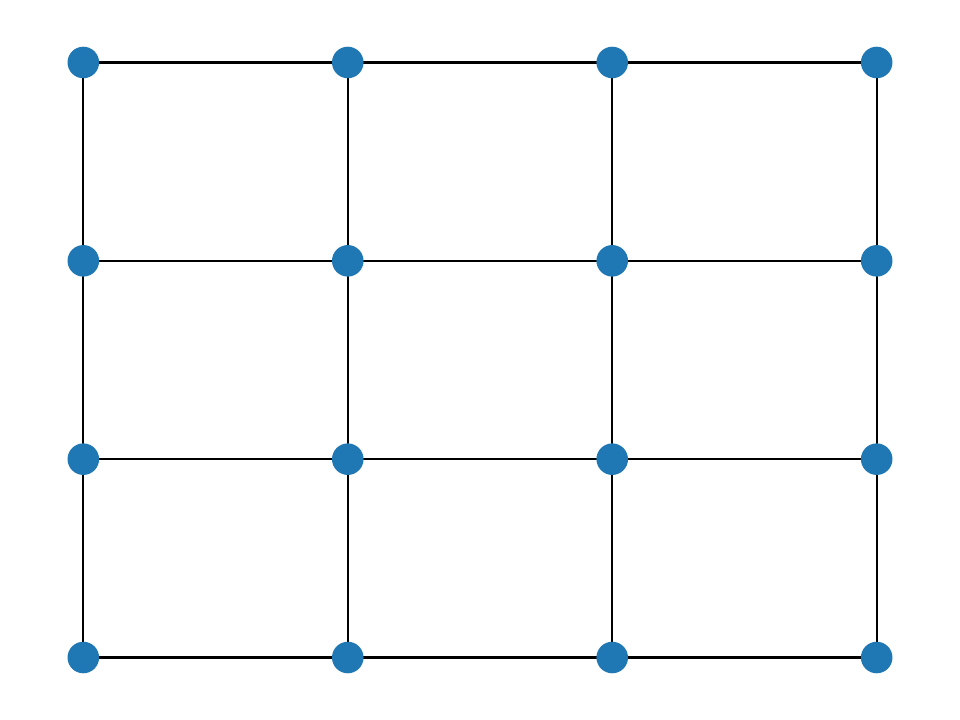}}
			\caption{Illustration of two graph topologies. The spectral gaps $(1-\lambda)$ increases from ring graph to grid graph.}
			\label{fig:graph}
		\end{figure}

        \subsection{Distributed estimation}
        \label{sec:ls}
        
        In this part, we consider a canonical distributed estimation problem with $n$ sensors connected over a network \cite{xu2017convergence}. Each sensor is designed to measure a parameter $x\in\R^p$ with some scaled Gaussian noise $i^2 s_i\in\R^p$, i.e., $b_i = A_i x + i^2 s_i$, due to the heterogeneity of sensors. Here, $A_i\in\R^{m\times p}$ is the measurement matrix and $b_i\in\R^p$ is the collected data of sensor $i$. To obtain accurate estimation of the parameter $x$, the sensors collaborate to solve the maximum likelihood estimation problem, which is equivalent to Problem \eqref{eq:ls}.
        
    \begin{equation}
        \label{eq:ls}
        \begin{aligned}
            \min_{x\in\R^p} f(x) &= \frac{1}{n}\sumn f_i(x),
            f_i(x):= \frac{1}{2m}\normi{A_i x - b_i}^2 + \frac{\rho}{2}\normi{x}^2.
        \end{aligned}
    \end{equation}
    We set $p=10$ and $m=5000$ to generate the rows $\crki{a_{ij}}_{j=1}^m$ of $A_i\in\R^{m\times p}$ from a standard multivariate normal distribution. For a given $\hat{x}\in\R^p$ (generated from a multivariate normal distribution with a fixed random seed), each $b_{i}\in\R^p$ is generated by $b_{i} = A_i\hat{x} + i^2 s_{i}$, where $s_{i}\in\R^p$ is from a standard multivariate normal distribution.

    We compare the performance of GT-RR (Algorithm \ref{alg:GT-RR}), ED-RR (Algorithm \ref{alg:ED-RR}), D-RR \cite{huang2023drr}, SGD, Exact Diffusion (ED) \cite{yuan2018exact,huang2021improving}, DSGT \cite{pu2021distributed}, and centralized RR (Algorithm \ref{alg:CRR}).
    We use both a constant stepsize (Fig. \ref{fig:ls_c}) and decreasing stepsizes (Fig. \ref{fig:ls_d}). It can be seen that all the decentralized RR methods achieve better accuracy than their unshuffled counterparts after the starting epochs. Due to the effect of graph topology, their performance is worse than centralized RR as expected.
        
        Comparing the three decentralized RR methods, we find that 
        as the network topology becomes better-connected (i.e., from a ring graph to a grid graph), their performance tends to be more comparable to that of centralized RR.
        Regarding the differences between GT-RR and ED-RR, we observe that GT-RR performs slightly better than ED-RR. For the stepsize policy, decreasing stepsizes are found to be more favorable, allowing for larger stepsizes in the initial epochs and leading to similar accuracy while requiring fewer epochs.

    \begin{figure}[htbp]
        \centering
        \subfloat[Ring graph, $n = 16$.]{\includegraphics[width = 0.245\textwidth]{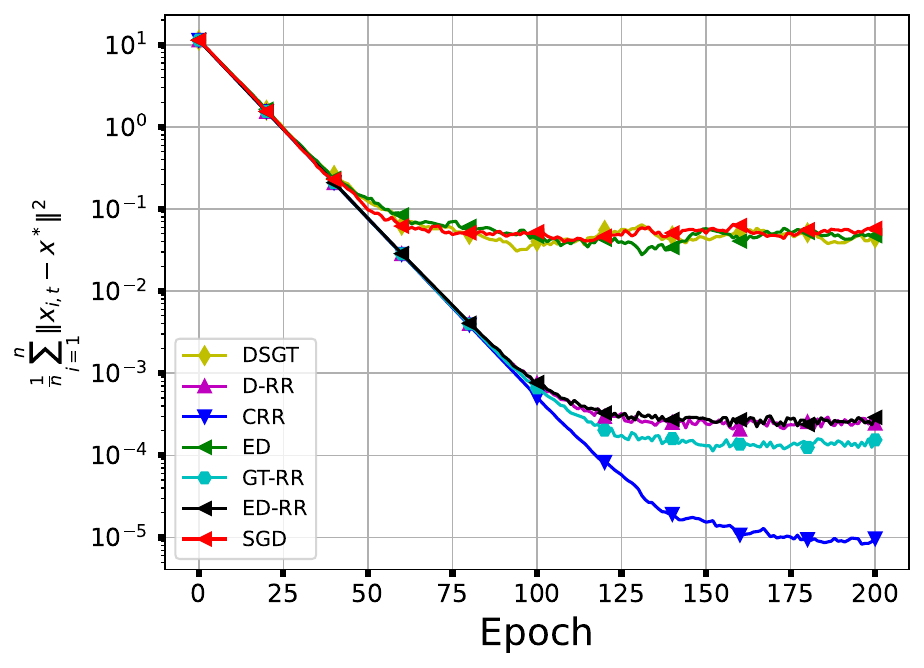}}
        \subfloat[Grid graph, $n = 16$.]{\includegraphics[width = 0.245\textwidth]{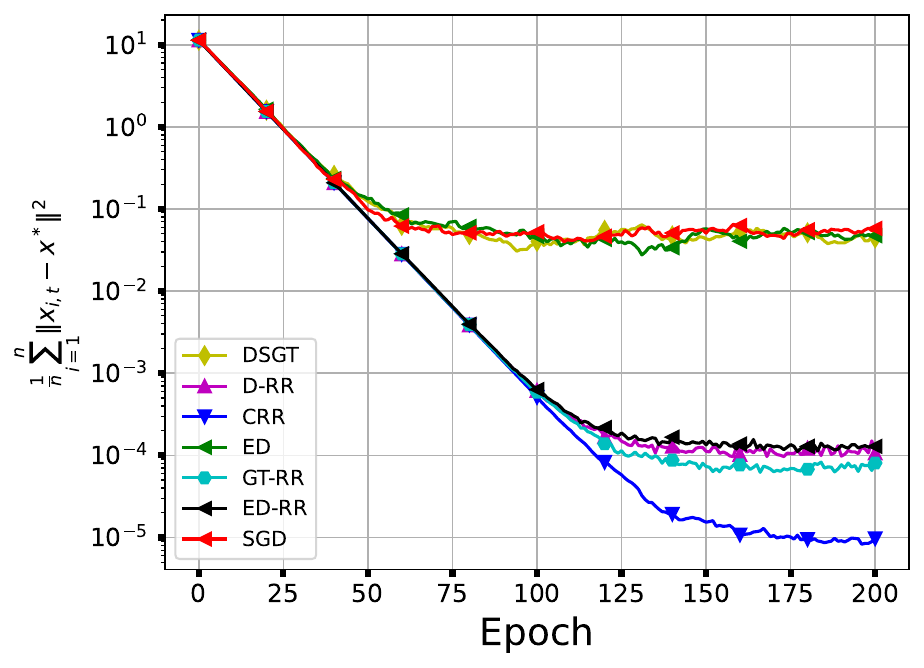}}
        \caption{Comparison among ED-RR, GT-RR, D-RR, ED, DSGT, SGD, and centralized RR for solving Problem \eqref{eq:ls}. The stepsizes are set as $1\times 10^{-5}$ for both graphs.}
        \label{fig:ls_c}
    \end{figure}

    \begin{figure}[htbp]
        \centering
        \subfloat[Ring graph, $n = 16$.]{\includegraphics[width = 0.245\textwidth]{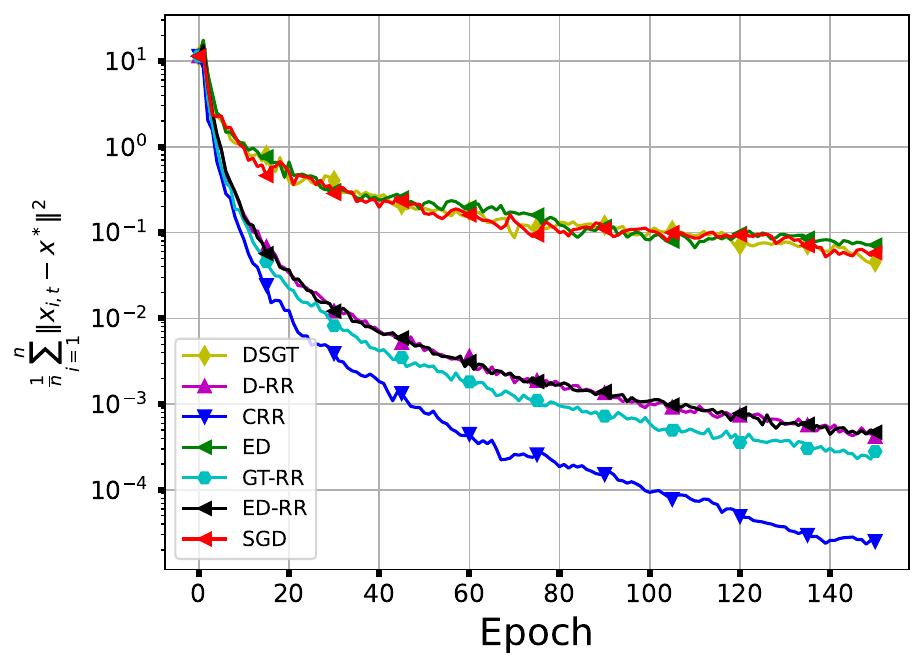}}
        \subfloat[Grid graph, $n = 16$.]{\includegraphics[width = 0.245\textwidth]{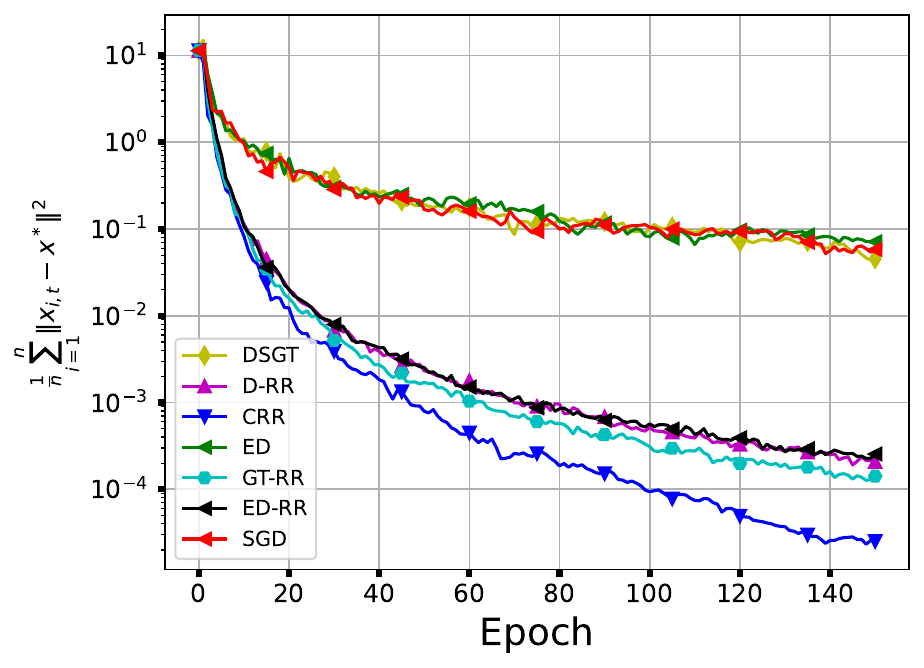}}
        \caption{Comparison among ED-RR, GT-RR, D-RR, ED, DSGT, SGD, and centralized RR for solving Problem \eqref{eq:ls}. The stepsizes are set as $1/(500t + 500)$ for both graphs.}
        \label{fig:ls_d}
    \end{figure}

    \subsection{Neural network}
    
    In this part, we compare the performance of the aforementioned methods in training a neural network with one hidden layer on the MNIST dataset \cite{mnist} over a ring graph and a grid graph, respectively. 
    Following the common approach for training machine learning models, we reduce the stepsizes on a plateau when comparing the performance of different methods, that is, we reduce the stepsizes if the measure metric does not decrease. The results are depicted in Fig. \ref{fig:nn}.
    We observe that GT-RR and ED-RR achieve better empirical performance compared to DSGT and ED, as well as D-RR. Surprisingly, GT-RR and ED-RR perform slightly better than CRR.

    \begin{figure}[htbp]
        \centering
        \subfloat[Ring graph, $n = 16$.]{\includegraphics[width = 0.245\textwidth]{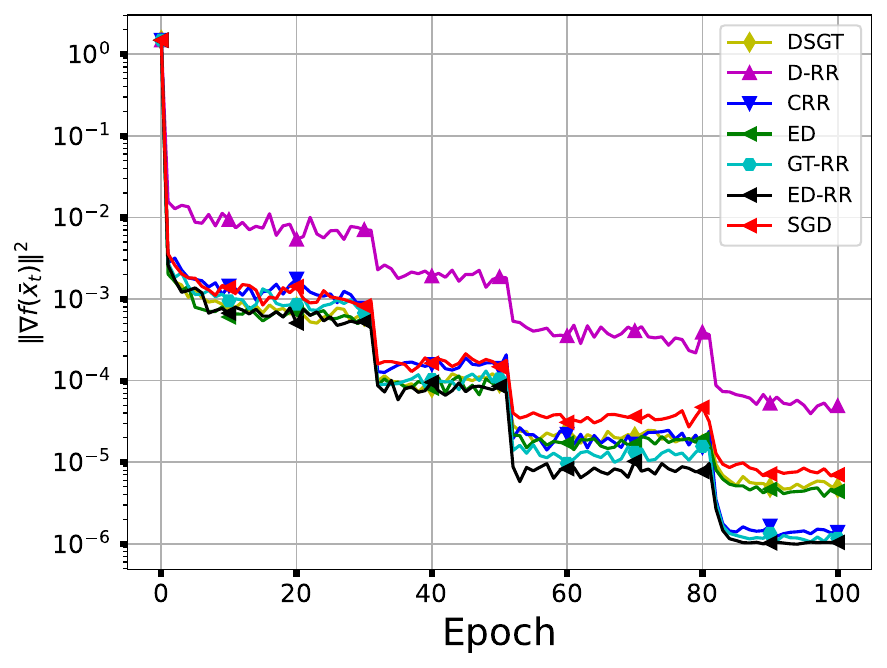}}
        \subfloat[Grid graph, $n = 16$.]{\includegraphics[width = 0.245\textwidth]{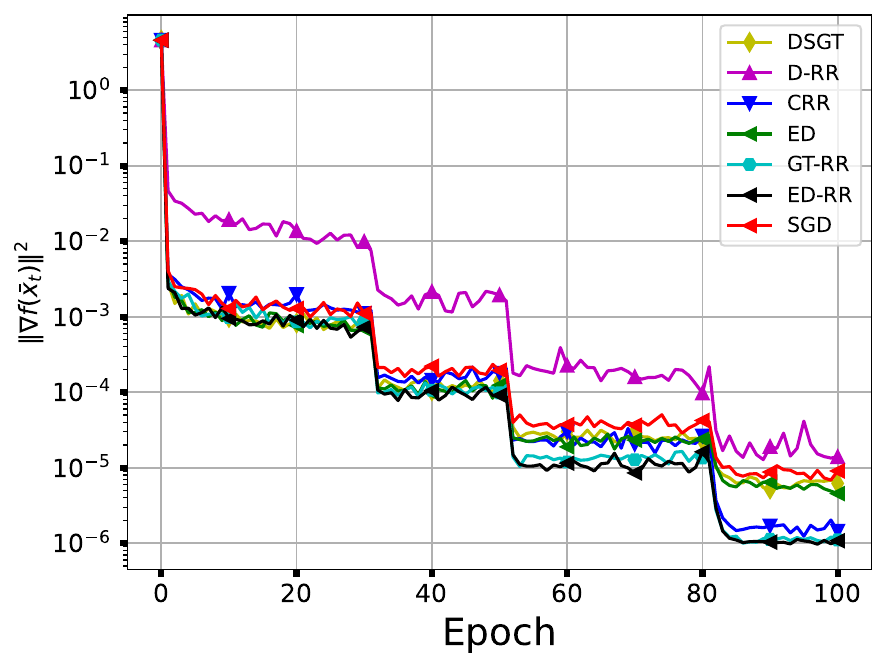}}
        \caption{Comparison among ED-RR, GT-RR, D-RR, ED, DSGT, SGD, and centralized RR for training a neural network on the MNIST dataset using constant stepsizes. The stepsizes are sequentially set as $1 / 2$, $1 / 10$, $1 / 50$, and $1 / 250$.}
        \label{fig:nn}
    \end{figure}

    \section{Conclusions}
    \label{sec:conclusions}

    This paper focuses on addressing the distributed optimization problem over networked agents. The proposed algorithms, Gradient Tracking with Random Reshuffling (GT-RR) and Exact Diffusion with Random Reshuffling (ED-RR), achieve theoretical guarantees comparable to the centralized RR method concerning the sample size $m$ and the epoch counter $T$ for smooth nonconvex objective functions and objective functions satisfying the PL condition. The obtained results notably improve upon previous works, particularly in terms of the sample size $m$ and the spectral gap $(1-\lambda)$ corresponding to the graph topology.
    In addition, experimental results corroborate the theoretical findings and demonstrate that the proposed algorithms, GT-RR and ED-RR, achieve better empirical performance under various network topologies, outperforming their unshuffled counterparts as well as D-RR. We believe that the proposed methods offer valuable insights into solving distributed optimization and machine learning problems over networked agents in real-world scenarios with improved efficiency and effectiveness.

\appendices

\section{Centralized RR}
\label{app:crr}
    \begin{algorithm}
		\caption{Centralized Random Reshuffling (C-RR)}
		\label{alg:CRR}
		\begin{algorithmic}[1]
			\STATE Initialize $x_{0}$ and stepsize $\alpha_t$.
			\FOR{Epoch $t = 0, 1, 2,\dots, T-1$}
			\STATE Sample $\crk{\pi_{0}, \pi_{1},\dots, \pi_{m-1}}$ of $[m]$
			\STATE Set $x_{t}^0 = x_{t}$
			\FOR{$\ell = 0, 1,\dots, m-1$}
			\STATE Update $x_{t}^{\ell + 1} = x_t^{\ell} -  {\frac{\alpha_t}{n}}\sumn \nabla f_{i, \pi_\ell}(x_t^{\ell})$.
			\ENDFOR
			\STATE Set $x_{t+ 1} = x_{t}^m$.
			\ENDFOR
			\STATE Output $x_{i, T}$.
		\end{algorithmic}
	\end{algorithm}

\section{Proofs}

\subsection{Proof of Lemma \ref{lem:abc}}
\label{app:lem_abc}
    Eliminating $\s_t^\ell$ in \eqref{eq:abc}, we obtain $\x_t^{\ell+1}-\x_t^\ell
    = (AC-B^2)\x_t^\ell-AC\x_t^{\ell-1}-\alpha_tA\prt{\nabla \Fp{\ell}(\x_t^\ell)-\nabla \Fp{\ell-1}(\x_t^{\ell-1})}$, $\ell \in [m-1]$.

    Then,
   \begin{equation}
    \label{eq:abc_x}
     \begin{aligned}
        &\x_t^1 = AC\x_t^0 - \alpha_t A\nabla \Fp{0}(\x_t^0)-B\z_t^0,\\
        &\x_t^{\ell + 1} = (AC - B^2 + I)\x_t^{\ell} - AC\x_t^{\ell - 1}\\
        &\quad -\alpha_tA\prt{\nabla \Fp{\ell}(\x_t^\ell)-\nabla \Fp{\ell-1}(\x_t^{\ell-1})},\ell\in [m-1]. 
     \end{aligned}
   \end{equation}

    \textbf{GT-RR}. We consider the compact form of GT-RR:
    \begin{equation}
        \label{eq:GT-RR}
        \begin{aligned}
            \x_{t}^{\ell + 1} & = W(\x_t^\ell - \alpha_t \y_t^\ell)\\
            \y_t^{\ell + 1} &= W\y_t^\ell + \nabla \Fp{\ell + 1}(\x_t^{\ell + 1}) - \nabla \Fp{\ell}(\x_t^{\ell}).
        \end{aligned}
    \end{equation}
    
    Eliminating $\y_t^\ell$ in \eqref{eq:GT-RR}, we have $\x_t^{\ell + 1} - W\x_t^\ell = W\x_t^\ell - W^2\x_t^{\ell -1} - \alpha_tW\brki{\nabla \Fp{\ell}(\x_t^\ell) - \nabla \Fp{\ell - 1}(\x_t^{\ell - 1})}.$

    Choosing $A = W$, $B = I-W$, and $C= W$ in \eqref{eq:abc_x} recovers the above update.

    \textbf{ED-RR}. Eliminating $\d_t^\ell$ in \eqref{eq:ED-RR} yields $\x_t^{\ell+1} - \x_t^{\ell}= W\x_t^\ell - W\x_t^{\ell - 1} - \alpha_t W\brki{\nabla \Fp{\ell}(\x_t^{\ell}) - \nabla \Fp{\ell - 1}(\x_t^{\ell - 1})} - (I-W)\x_t^\ell.$

    Letting $A = W$, $B = (I-W)^{1/2}$, and $C = I$ in \eqref{eq:abc_x} leads to the above relation. 
    Comparing the update forms of $\x_t^\ell$ in \eqref{eq:abc_epoch} and \eqref{eq:abc} leads to the definition of $\s_t^\ell$.

\subsection{Proof of Lemma \ref{lem:re}}
\label{app:re}

    The arguments are similar to those in \cite[Transformation II]{alghunaim2021unified}. We present them here for completeness.
    We have from the decomposition of $W$ in \eqref{eq:W_decomp} and Assumption \ref{as:abc} that
    \begin{align}
        A&=U\Lambda_a U^{\T}=
        \begin{pmatrix}
            \frac{1}{\sqrt{n}}\1 &\hU \\
        \end{pmatrix}
        \begin{pmatrix}
            1 &\0 \\
            \0 &\hat{\Lambda}_a \\
        \end{pmatrix}
        \begin{pmatrix}
            \frac{1}{\sqrt{n}}\1^{\T} \\
            \hU^{\T}
        \end{pmatrix}.
    \end{align}
    Similarly, we have $B^2 = U\diag(0, \hLambda_b^2)U^{\T}$ and $C=U\diag(1, \hLambda_c)U^{\T}$,
    where $\hat{\Lambda}_a=\diag(\lambda_{a,2}, \ldots, \lambda_{a,n})$, $\hat{\Lambda}_b^2=\diag(\lambda_{b,2}^2, \ldots, \lambda_{b,n}^2)$, and $\hat{\Lambda}_c=\diag(\lambda_{c,2}, \ldots, \lambda_{c,n})$.
    Here, $\lambda_{a,i}:=\sum_{d=0}^{p}a_d\lambda_{i}^{d}$, $\lambda_{b,i}^2:=\sum_{d=0}^{p}b_d\lambda_{i}^{d}$ and $\lambda_{c,i}:=\sum_{d=0}^{p}c_d\lambda_{i}^{d}$. Multiplying both sides of \eqref{eq:abc} by $U^{\T}$, it follows that $U^{\T}\x_t^{\ell + 1} = (\Lambda_a\Lambda_c - \Lambda_b^2)U^{\T}\x_t^\ell - \alpha_t \Lambda_a U^{\T}(\nabla \Fp{\ell}(\x_t^\ell) - \nabla F(\Bxt{0})) - U^{\T}\s_t^\ell$ and 
    $U^{\T}\s_t^{\ell + 1} = U^{\T}\s_t^\ell + \Lambda_b^2U^{\T}\x_t^\ell$.
   
   Hence, we obtain $\bar{x}_t^{\ell+1} = \bar{x}_t^{\ell}-\alpha_t \bar{\nabla} \Fp{\ell}(\x_t^\ell)$ and 
        \begin{equation}
            \label{eq:matrix_form}
            \begin{aligned}
                &\begin{pmatrix}
                    \hU^{\T}\x_t^{\ell + 1} \\
                    \hLambda_b^{-1}\hU^{\T}\s_t^{\ell + 1} \\
                \end{pmatrix}
                =
                \underbrace{\begin{pmatrix}
                    \hLambda_a\hLambda_c - \hLambda_b^2 & - \hLambda_b \\
                    \hLambda_b & I \\
                \end{pmatrix}}_{G}
                \begin{pmatrix}
                    \hU^{\T}\x_t^{\ell} \\
                    \hLambda_b^{-1}\hU^{\T}\s_t^{\ell} \\
                \end{pmatrix}\\ 
                &\quad -\alpha_t
                \begin{pmatrix}
                    \hLambda_a \hU^{\T}(\nabla \Fp{\ell}(\x_t^\ell) - \nabla F(\Bxt{0})) \\
                    \bzero \\
                \end{pmatrix}.
            \end{aligned}
        \end{equation}
    We then conclude from Lemma 1 in \cite{alghunaim2021unified} that $G$ admits the similarity decomposition $G = V\Gamma V^{-1}$ if the spectral norm of $G$ is strictly less than one. The desired result then follows by multiplying both sides of \eqref{eq:matrix_form} by $V^{-1}$.
    






\subsection{Proof of Lemma \ref{lem:cons_e}}
\label{app:cons_e}
    First, we have the decomposition 
    \begin{equation}
        \label{eq:decomp}
        \begin{aligned}
            &\frac{1}{n}\norm{\x_t^\ell - \Bxt{0}}^2 = \frac{1}{n}\norm{\x_t^\ell - \Bxt{\ell}}^2 + \norm{\bx{t}{\ell} - \bx{t}{0}}^2\\
            &\leq \frac{\norm{V}^2\norm{\e_t^\ell}^2}{n} + \norm{\bx{t}{\ell} - \bx{t}{0}}^2, \ \ell = 0,1,2,\ldots,m.
        \end{aligned}
    \end{equation}

    Denote $V^{-1}:= (V^{-1}_1, V^{-1}_2)$, where $V^{-1}_1, V^{-1}_2\in\R^{2(n-1)\times (n-1)}$ and $\normi{V_1^{-1}}\leq \normi{V^{-1}}$. Note that $V^{-1}_1$ (resp. $V^{-1}_2$) does not represent the inverse of $V_1$ (resp. $V_2$) but the left (resp. right) part of $V^{-1}$. Denote 
    $M_1^{\ell - j}:= (\gamma^2)^{\ell -j}\normi{V_1^{-1}}^2\normi{\hLambda_a}^2$. For any $\ell = 0,1,\ldots, m-1$, we have from Lemma \ref{lem:re_epoch} and the unbiasedness condition \eqref{eq:epoch_unb} that 
    \small
    \begin{equation}
        \label{eq:cond_e_s1}
        \begin{aligned}
            &\condE{\norm{\e_t^{\ell + 1}}^2}{\cF_t^0}
            \leq (1 + q)\prt{\gamma^2}^{\ell + 1}\norm{\e_t^0}^2 \\
            &+ \alpha^2_t(3+q^{-1})(\ell+1) \sum_{j=0}^{\ell}M_1^{\ell - j}L^2\condE{\norm{\x_t^j - \Bxt{0}}^2}{\cF_t^0}\\
            &+ 3\alpha_t^2(\ell + 1)\sum_{j=0}^\ell M_1^{\ell - j}\sumn \condE{\norm{\nabla \fp{i}{j}(\bx{t}{0}) - \nabla f_i(\bx{t}{0})}^2}{\cF_t^0},
        \end{aligned}
    \end{equation}\normalsize
    where we invoke $L$-smoothness of $f_{i,\ell}$ and Young's inequality for any $q>0$. 
    
    In light of \eqref{eq:decomp}, choosing $\ell = m-1$ and $q = (\frac{1+\gamma^2}{2\gamma^{2}})^{m-1}$ in \eqref{eq:cond_e_s1}, we obtain the contractive property of $\E[\normi{\e_t^0}^2|\cF_t^0]$ (w.r.t. itself) in two consecutive epochs, that is,
    \begin{equation}
        \label{eq:e_epoch}
        \begin{aligned}
            & \condE{\norm{\e_{t+1}^0}^2}{\cF_t^0} = \condE{\norm{\e_t^m}^2}{\cF_t^0} \leq 
            (\frac{1 + \gamma^2}{2})^m\norm{\e_t^0}^2\\
            &\quad + \frac{3\alpha_t^2m\C_4}{\normi{V}^2}\sum_{j=0}^{m-1}\sumn \condE{\norm{\nabla \fp{i}{j}(\bx{t}{0}) - \nabla f_i(\bx{t}{0})}^2}{\cF_t^0}\\
            &\quad + \frac{3\alpha_t^2mn\C_4L^2}{\normi{V}^2(1-\gamma^{2})}\condE{\cL_t}{\cF_t^0}.
        \end{aligned}
    \end{equation} 

    Finally, invoking Lemma \ref{lem:bounded_var} leads to the desired result.

\subsection{Proof of Lemma \ref{lem:cL}}
\label{app:cL}
    We first consider the term $\sum_{\ell=0}^{m-1}\condEi{\norm{\e_t^{\ell}}^2}{\cF_t^0}$. From \eqref{eq:cond_e_s1}, \eqref{eq:decomp}, and $\normi{V_1^{-1}}^2\leq \normi{V^{-1}}^2$, by choosing $q = (\frac{1+\gamma^2}{2\gamma^{2}})^{\ell+1}-1$, we have for $\ell = 0,1,\ldots, m-1$ that
    \small
    \begin{equation}
        \label{eq:e_iteration}
        \begin{aligned}
            &\condE{\norm{\e_t^{\ell+1}}^2}{\cF_t^0} \leq 
            (\frac{1 + \gamma^2}{2})^{\ell+1}\norm{\e_t^0}^2\\
            & + \frac{3\alpha_t^2mn\C_4L^2}{\normi{V}^2(1-\gamma^2)}\sum_{j=0}^\ell\crk{\frac{\normi{V}^2}{n}\condE{\norm{\e_t^j}^2 + \norm{\bx{t}{j} - \bx{t}{0}}^2}{\cF_t^0}}\\
            &+ \frac{3\alpha_t^2m\C_4}{\normi{V}^2}\sum_{j=0}^{\ell}(\gamma^2)^{\ell - j}\sumn \condE{\norm{\nabla \fp{i}{j}(\bx{t}{0}) - \nabla f_i(\bx{t}{0})}^2}{\cF_t^0}.
        \end{aligned}
    \end{equation} \normalsize

    Summing up from $\ell=0$ to $m-1$ on both sides of the above inequality, it follows that
    \small
    \begin{equation}
        \label{eq:cL_cons}
        \begin{aligned}
            &\sum_{\ell=0}^{m-1}\condE{\norm{\e_t^{\ell}}^2}{\cF_t^0} 
            \leq
            \frac{1-(\frac{1+\gamma^2}{2})^m}{1-\frac{1+\gamma^2}{2}}\norm{\e_t^0}^2\\
            & + \frac{3\alpha_t^2m^2nL^2\C_4}{\normi{V}^2(1-\gamma^{2})}\condE{\cL_t}{\cF_t^0}+ \frac{6\alpha_t^2m^2nL\C_4}{\normi{V}^2(1-\gamma^2)}\brk{f(\bx{t}{0}) - f^* +\sigfmn},\\
        \end{aligned}
    \end{equation}   \normalsize
    where we invoke Lemma \ref{lem:bounded_var}. 

    We next consider the term $\sum_{\ell = 0}^{m-1}\E[\norm{\bx{t}{\ell} - \bx{t}{0}}^2|\cF_t^0]$ in light of Lemma \ref{lem:rr}. Recall the relation that $\bx{t}{\ell} - \bx{t}{0}= \alpha_t/n\sum_{j=0}^{\ell - 1}\sumn\nabla\fp{i}{j}(x_{i,t}^j)$. Then,
    \small
    \begin{equation}
        \label{eq:cL_bar0}
        \begin{aligned}
            &\condE{\norm{\bx{t}{\ell} - \bx{t}{0}}^2}{\cF_t^0}
            \leq \frac{2\alpha_t^2 L^2\ell}{n}\sumn\sum_{j=0}^{\ell - 1}\condE{\norm{x_{i,t}^{j} - \bx{t}{0}}^2}{\cF_t^0}\\
            & + 2\alpha_t^2\condE{\norm{\frac{1}{n}\sum_{j=0}^{\ell - 1}\sumn \nabla \fp{i}{j}(\bx{t}{0})}^2}{\cF_t^0}.
        \end{aligned}
    \end{equation}\normalsize

    We then consider the last term in \eqref{eq:cL_bar0}. Compared to the estimate in \cite[supplementary material (57)]{huang2023drr}, the upper bound in \eqref{eq:cL_bar1} improves the order of $m$ due to Lemma \ref{lem:rr}.
    \small
    \begin{equation}
        \label{eq:cL_bar1}
        \begin{aligned}
            &\frac{1}{2}\condE{\norm{\frac{1}{n}\sum_{j=0}^{\ell - 1}\sumn \nabla \fp{i}{j}(\bx{t}{0})}^2}{\cF_t^0} - \ell^2\norm{\nabla f(\bx{t}{0})}^2\\
            &\leq \ell^2\condE{\norm{\frac{1}{n\ell}\sum_{j=0}^{\ell - 1}\sumn\brk{\nabla \fp{i}{j}(\bx{t}{0}) -\nabla f_i(\bx{t}{0})}}^2}{\cF_t^0}\\
            &\leq \frac{\ell^2}{n}\sumn \condE{\norm{\frac{1}{\ell}\sum_{j=0}^{\ell - 1}\brk{\nabla \fp{i}{j}(\bx{t}{0}) - \nabla f_i(\bx{t}{0})}}^2}{\cF_t^0}\\
            &= \frac{\ell^2}{n} \frac{m-\ell}{\ell(m - 1)}\sumn\crk{\frac{1}{m}\sum_{\ell = 1}^m\condE{\norm{\nabla f_{i,\ell}(\bx{t}{0}) - \nabla f_i(\bx{t}{0})}^2}{\cF_t^0}},
        \end{aligned}
    \end{equation}\normalsize
    where the last equality holds due to Lemma \ref{lem:rr} by setting $\bar{X}_\pi = \frac{1}{\ell}\sum_{j = 0}^{\ell - 1}\nabla \fp{i}{j}(\bx{t}{0})$,  $\bar{X} = \nabla f_i(\bx{t}{0})= \frac{1}{m}\sum_{\ell = 1}^{m} \nabla f_{i,\ell}(\bx{t}{0})$, and $\sigma^2= \frac{1}{m}\sum_{\ell = 1}^m\condEi{\normi{\nabla f_{i,\ell}(\bx{t}{0}) - \nabla f_i(\bx{t}{0})}^2}{\cF_t^0}.$

    Therefore, we obtain the improved upper bound for $\sum_{\ell = 0}^{m-1}\condEi{\normi{\bx{t}{\ell} - \bx{t}{0}}^2}{\cF_t^0}$ in \eqref{eq:cL_bar}:
    \small
    \begin{equation}
        \label{eq:cL_bar}
        \begin{aligned}
            &\sum_{\ell = 0}^{m-1}\condE{\norm{\bx{t}{\ell} - \bx{t}{0}}^2}{\cF_t^0} 
            \leq \alpha_t^2m^2 L^2\condE{\cL_t}{\cF_t^0}\\
            & + 2\alpha_t^2m^3\norm{\nabla f(\bx{t}{0})}^2+ \frac{4\alpha_t^2 Lm(m+1)}{3}\brk{f(\bx{t}{0}) - f^* + \sigfmn}.
        \end{aligned}
    \end{equation}\normalsize


    Combining \eqref{eq:cL_cons} and \eqref{eq:cL_bar}, letting $\alpha_t\leq \min\crki{1/(2mL), (1-\gamma^2)/(4mL\sqrt{\C_4})}$ yield the desired result.

\subsection{Proof of Lemma \ref{lem:final_re}}
\label{app:final_re}
    Inspired by \cite[Lemma 15]{huang2023drr}, we define $Q_t$ as $Q_t := f(\bx{t}{0}) - f^* + \alpha_t \C_0 \normi{\e_t^0}^2,$
    where $\C_0>0$ is to be determined later. Substituting the result of Lemma \ref{lem:cL} into Lemmas \ref{lem:cons_e} and \ref{lem:descent}, we obtain for $\alpha_t \leq (1-\gamma^2)/\sqrt{4mL^2\C_1}$ that 
    \small
    \begin{equation}
        \label{eq:et_cL}
        \begin{aligned}
            &\condE{\norm{\e_{t + 1}^0}^2}{\cF_t^0} 
            \leq \brk{\prt{\frac{1+\gamma^2}{2}}^m + \frac{12\alpha_t^2 m L^2 \C_2}{1-\gamma^2}}\norm{\e_t^0}^2\\
            &\quad + 12\alpha_t^2m^2nL\norm{V^{-1}}^2\norm{\hLambda_a}^2\brk{f(\bx{t}{0}) - f^* + \sigfmn}\\
            &\quad + \frac{12\alpha_t^4m^4n L^2\norm{V^{-1}}^2\norm{\hLambda_a}^2}{1-\gamma^2}\norm{\nabla f(\bx{t}{0})}^2,
        \end{aligned}
    \end{equation}\normalsize
    and 
    \begin{equation}
        \label{eq:f_cL}
        \begin{aligned}
            &\condE{f(\bx{t + 1}{0}) - f^*}{\cF_t^0} \leq \prt{1 + \frac{8\alpha_t^3m^2 L^3\C_1}{1-\gamma^2}}\brk{f(\bx{t}{0}) - f^*}\\
            &\quad + \frac{4\alpha_t L^2\C_2}{n\normi{V^{-1}}^2\normi{\hLambda_a}^2}\norm{\e_t^0}^2 + \frac{8\alpha_t^3m^2L^3\C_1}{1-\gamma^2}\sigfmn\\
            &\quad- \frac{\alpha_t m }{2}\prt{1 - 8\alpha_t^2m^2 L^2}\norm{\nabla f(\bx{t}{0})}^2.
        \end{aligned}
    \end{equation}

    Therefore, 
    \begin{equation}
        \label{eq:Qt_can}
        \begin{aligned}
            &\condE{Q_{t + 1}}{\cF_t^0} \leq M_2\brk{f(\bx{t}{0}) - f^*}+ M_3\norm{\e_t^0}^2\\
            &- \frac{\alpha_t m M_4}{2}\norm{\nabla f(\bx{t}{0})}^2+ M_5\sigfmn,
        \end{aligned}
    \end{equation}
    where
    \small
    \begin{align*}
        M_2&:=  1 + \frac{8\alpha_t^3m^2 L^3\C_1}{1-\gamma^2} +12\alpha_t^3m^2nL\norm{V^{-1}}^2\norm{\hLambda_a}^2\C_0,\\
        M_3&:= \brk{\prt{\frac{1+\gamma^2}{2}}^m + \frac{12\alpha_t^2 m L^2 \C_2}{1-\gamma^2}}\alpha_t\C_0 + \frac{4\alpha_t L^2\C_2}{n\normi{V^{-1}}^2\normi{\hLambda_a}^2},\\
        M_4&:= 1 - 8\alpha_t^2m^2 L^2 - \frac{24\alpha_t^4m^3n L^2\normi{V^{-1}}^2\normi{\hLambda_a}^2\C_0}{1-\gamma^2},\\
        M_5&:= 12\alpha_t^3m^2nL\norm{V^{-1}}^2\norm{\hLambda_a}^2\C_0 + \frac{8\alpha_t^3m^2L^3\C_1}{1-\gamma^2}.
    \end{align*}\normalsize

    Letting $\alpha_t \leq (1-\gamma^2)/(2L\sqrt{6m\C_4})$,
    it suffices to choose $\C_0:= 8L^2\normi{V}^2/[n(1-\gamma^2)]$
    such that $M_3 \leq \alpha_t \C_0$.

    Finally, letting $\C_3:= 12\C_4 + \C_1$ and $\alpha_t\leq \min\crki{1/(4\sqrt{2}mL), \sqrt{1-\gamma^2}/\prti{6m^{3/4}\C_4^{1/4} L}}$
    yields the desired result.



\subsection{Proof of Theorem \ref{thm:nonconvex_final_results}}
\label{app:nonconvex_final_results}

        We first state Lemma \ref{lem:hhelp} from  \cite[Lemma 6]{mishchenko2020random} that provides a direct link connecting Lemma \ref{lem:final_re} to the desired results.

        \begin{lemma}
        \label{lem:hhelp}
        If there exists constants $a,b,c \geq 0$ and non-negative sequences ${s_t},{q_t}$, for any $t$, we have $ s_{t+1} \leq (1+a)s_{t}-bq_t+c,$
        then it holds that $\min\limits_{t=0,1,...,T-1} q_t \leq \frac{(1+a)^T}{bT}s_0+\frac{c}{b}.$
        \end{lemma}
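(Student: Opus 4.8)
The plan is to bound $\min_t q_t$ directly. Set $\mu := \min_{t=0,1,\ldots,T-1} q_t$, which is non-negative since the $q_t$ are. Because $-q_t \leq -\mu$ for every $t \in \{0,1,\ldots,T-1\}$, the hypothesis $s_{t+1}\leq (1+a)s_t - bq_t + c$ implies the simpler recursion $s_{t+1}\leq (1+a)s_t - b\mu + c$ for all such $t$.

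I would then unroll this recursion from $t=0$ to $t=T-1$. Using $\sum_{t=0}^{T-1}(1+a)^t = ((1+a)^T-1)/a$ when $a>0$, together with $s_T\geq 0$, this gives
\begin{align*}
    0 \leq s_T \leq (1+a)^T s_0 + (c-b\mu)\,\frac{(1+a)^T-1}{a}.
\end{align*}
(When $a=0$ the geometric sum equals $T$ and $(1+a)^T=1$, so the same computation reads $0\leq s_0 + (c-b\mu)T$ and the argument below is unchanged.)

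Next comes a case split. If $b\mu\leq c$, then $\mu\leq c/b\leq \frac{(1+a)^T}{bT}s_0 + \frac cb$ and there is nothing to prove. If instead $b\mu>c$, I would rearrange the displayed inequality to get $b\mu-c\leq \frac{a(1+a)^T s_0}{(1+a)^T-1}$; Bernoulli's inequality $(1+a)^T\geq 1+aT$ gives $(1+a)^T-1\geq aT$, hence $\frac{a}{(1+a)^T-1}\leq \frac1T$, so $b\mu-c\leq \frac{(1+a)^T s_0}{T}$, and dividing by $b>0$ yields $\mu\leq \frac{(1+a)^T}{bT}s_0 + \frac cb$, the claim.

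This is an elementary recursion lemma (it is Lemma~6 of \cite{mishchenko2020random}), so I do not expect a genuine obstacle — the proof is essentially bookkeeping. The points that need a little care are the degenerate case $a=0$, the use of Bernoulli's inequality to absorb $a/((1+a)^T-1)$ into $1/T$, and the implicit assumption $b>0$ for the final division. One could alternatively divide the recursion by $(1+a)^{t+1}$ to obtain an exactly telescoping sum for $\sum_t q_t/(1+a)^{t+1}$, but this produces an additive constant of order $(1+a)^T c/(abT)$, strictly worse than $c/b$; the direct case split above is what reproduces the stated bound.
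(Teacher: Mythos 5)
Your argument is correct. Note, however, that the paper does not prove this lemma at all: it is quoted verbatim from Lemma~6 of \cite{mishchenko2020random}, so there is no in-paper proof to match. Your case split on whether $b\mu\le c$ combined with Bernoulli's inequality is sound (including the $a=0$ degenerate case and the implicit $b>0$), and it recovers the stated constant. For reference, the standard proof of this lemma avoids the case split entirely: unrolling gives $0\le s_T\le (1+a)^T s_0 - b\sum_{t=0}^{T-1}(1+a)^{T-1-t}q_t + c\sum_{t=0}^{T-1}(1+a)^{T-1-t}$, and since each weight $(1+a)^{T-1-t}\ge 1$ one bounds $\min_t q_t$ by the weighted average $\sum_t (1+a)^{T-1-t}q_t\big/\sum_t(1+a)^{T-1-t}$, which immediately yields $\frac{(1+a)^T s_0}{bT}+\frac{c}{b}$. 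Both routes are elementary and give the same bound; yours trades the weighted-average observation for the Bernoulli estimate $a/((1+a)^T-1)\le 1/T$.
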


    Taking full expectation on the result given in Lemma \ref{lem:final_re} and applying Lemma \ref{lem:hhelp}, we obtain
    \small
    \begin{equation}
        \label{eq:minE}
        \begin{aligned}
            &\min\limits_{t=0,\ldots,T-1}\E\brk{\norm{{\nabla}f(\bar{x}_t^{0})}^2}
            \leq \frac{12 Q_0}{\alpha m T} + \frac{32\alpha^2m L^3\C_3}{1-\gamma^2}\sigfmn,
        \end{aligned}
    \end{equation}\normalsize
    where the stepsize $\alpha$ is set to satisfy $\alpha \leq \min\crki{\brki{\prti{1-\gamma^2}/\prti{16m^2 L^3\C_1 T}}^{1/3}, \brki{\prti{1-\gamma^2}/\prti{192 m^2 L^3\C_4 T}}^{1/3}}.$

    If we set $\beta \geq 2\sqrt{2}(1-\gamma^2) + 3\prt{\frac{(1-\gamma^2)^2\C_1}{m}}^{\frac{1}{4}} + \sqrt{\frac{\C_1}{m}} + \sqrt{\frac{6\C_4}{m}}$ and $\alpha = \frac{1}{\frac{2mL\beta}{1-\gamma^2} + \prt{\frac{192 m^2 L^3 \C_1 T}{1-\gamma^2}}^{\frac{1}{3}}}$,
    then the stepsize $\alpha$ satisfies condition \eqref{eq:alpha_ncvx}, and 
    \small
    \begin{align}
        \label{eq:alpha}
        \alpha^2 \leq \prt{\frac{1-\gamma^2}{192 m^2 L^3 \C_1 T}}^{\frac{2}{3}},\ \frac{1}{\alpha mT} = \frac{ \frac{2mL\beta}{1-\gamma^2} + \prt{\frac{192 m^2 L^3 \C_1 T}{1-\gamma^2}}^{\frac{1}{3}}}{mT}.
    \end{align}\normalsize

    Substituting \eqref{eq:alpha} into \eqref{eq:minE} yields the desired result.


\subsection{Proof of Corollary \ref{cor:nonconvex_DGT_RR}}
\label{app:nonconvex_DGT_RR}
        According to \cite[Appendix B2]{alghunaim2021unified}, we have $\normi{V}^2\leq 3$, $\normi{V_1^{-1}}^2\leq\normi{V^{-1}}^2\leq 9$, $\normi{\hLambda_a}^2 = \lambda^2$, $\normi{\hLambda_b^{-1}}^2 =1 / (1-\lambda)^2$, and $\gamma= \lambda.$
        Then, 
        \small
        \begin{equation}
            \label{eq:Cs_gt}
            \begin{aligned}
                &\frac{1}{3}\leq C_1\leq 42,\ \C_2\leq \frac{27}{1-\lambda^2},\ \C_3\leq 366,\ \C_4\leq 27, \\
                &\beta_1 = 2\sqrt{2}(1-\lambda^2) + 3\prt{\frac{42(1-\lambda^2)^2}{m}}^{\frac{1}{4}} + \sqrt{\frac{42}{m}}+ \sqrt{\frac{162}{m}}.
            \end{aligned}
        \end{equation}\normalsize
            
        Plugging \eqref{eq:Cs_gt} into Theorem \ref{thm:nonconvex_final_results} yields the desired result.

        

\subsection{Proof of Corollary \ref{cor:nonconvex_ED_RR}}
\label{app:nonconvex_ED_RR}
        Similar to \cite[Appendix B.2]{alghunaim2021unified}, we have $\normi{V}^2\leq 4$, $\normi{V_1^{-1}}^2\leq\normi{V^{-1}}^2\leq 2/ \underline{\lambda}$, $\normi{\hLambda_a}^2 = \lambda^2$, $\normi{\hLambda_b^{-1}}^2=1/(1-\lambda)$, and  $\gamma= \sqrt{\lambda}$,
        where $\underline{\lambda}=\min\{\lambda_i|\lambda_i>0\}_{i=2}^n$. 
        Then, $1/3\leq \C_1 \leq 38/\prti{3\underline{\lambda}}$, $\C_2\leq 8/\prti{\underline{\lambda}(1-\lambda)}$, $\C_3\leq 326/\prti{3\underline{\lambda}}$, $\C_4\leq 8/\prti{\underline{\lambda}}$, and $\beta_2 =  2\sqrt{2}(1-\lambda) + 3\prti{\frac{38(1-\lambda)^2}{3\underline{\lambda}m}}^{1/4} + \sqrt{\frac{38}{3\underline{\lambda}m}} + \sqrt{\frac{48}{\underline{\lambda}m}}.$
        Invoking Theorem \ref{thm:nonconvex_final_results} yields the desired result.
        

    \subsection{Proof of Theorem \ref{thm:drr}}
    \label{app:thm_drr}
    
        The improvements on the convergence rates of D-RR relate to the term $\sum_{\ell=0}^{m-1}\E\brki{\normi{\bx{t}{\ell} - \bx{t}{0}}^2|\cF_t^0}$ and the construction of the Lyapunov function. Since relation $\bx{t}{\ell}-\bx{t}{0} = \alpha_t^2 / n\sum_{j=0}^{\ell-1} \sumn \nabla \fp{i}{j}(x_{i,t}^j)$ holds for D-RR, GT-RR, as well as ED-RR, we have the same recursion regarding  $\sum_{\ell=0}^{m-1}\E\brki{\normi{\bx{t}{\ell} - \bx{t}{0}}^2|\cF_t^0}$ for D-RR as in \eqref{eq:cL_bar}.
        
        Analogous to the definition of $\cL_t$ in \eqref{eq:cLt}, we define the auxiliary term $\Ldrr_t$ for D-RR as
        \small
        \begin{equation}
            \label{eq:drr_cL}
            \begin{aligned}
                \Ldrr_t&:= \frac{1}{n}\sum_{\ell=0}^{m-1}\norm{\x_t^\ell - \Bxt{\ell}}^2 + \sum_{\ell=0}^{m-1}\norm{\bx{t}{\ell} - \bx{t}{0}}^2
            \end{aligned}
        \end{equation}\normalsize

        Similar to \cite[supplementary material (53)-(56)]{huang2023drr}, we have 
        \small
        \begin{equation}
            \label{eq:drr_sum}
            \begin{aligned}
                &\sum_{\ell=0}^{m-1}\norm{\x_t^{\ell} - \1(\bx{t}{\ell})^{\T}}^2\leq \frac{2\brk{1 - \prt{\frac{1 + \lambda^2}{2}}^m}}{1-\lambda^2}\norm{\x_t^0 - \1(\bx{t}{0})^{\T}}^2\\
                &\quad + \frac{24L\alpha^2 m^2n \sigfmn}{(1-\lambda^2)^2}+ \frac{12mn\alpha^2}{(1-\lambda^2)^2}\norm{\nabla f(\bx{t}{0})}^2\\
                &\quad + \frac{24\alpha^2nL^2}{(1-\lambda^2)^2}\Ldrr_t + \frac{24 L \alpha^2 m^2n}{(1-\lambda^2)^2}\prt{f(\bx{t}{0}) - f^*}.
            \end{aligned}
        \end{equation}\normalsize

        Combing \eqref{eq:drr_sum} and \eqref{eq:cL_bar} 
        and letting $\alpha\leq \min\crki{(1-\lambda^2)/(4\sqrt{6}L), 1/(2mL)}$,
        we have 
        \small
        \begin{equation}
            \label{eq:cL_drr}
            \begin{aligned}
                &\condE{\Ldrr_t}{\cF_t^0}\leq \frac{4\brk{1 - \prt{\frac{1 + \lambda^2}{2}}^m}}{n(1-\lambda^2)}\norm{\x_t^0 - \1(\bx{t}{0})^{\T}}^2\\
                &+ \frac{28\alpha^2 m^3}{(1-\lambda^2)^2}\norm{\nabla f(\bx{t}{0})}^2+ \frac{54\alpha^2 m^2 L}{(1-\lambda^2)^2}\brk{(f(\bx{t}{0}) - f^*) + \sigfmn}.
            \end{aligned}
        \end{equation}\normalsize

        It is worth noting that considering the epoch-wise error of $\normi{\x_t^0 - \Bxt{0}}^2$ according to Remark \ref{rem:cons_drr} would not improve the order of $(1-\lambda)$ compared to \cite[supplementary material (58)]{huang2023drr} of D-RR. We thus follow the original recursion in \cite[Lemma 14]{huang2023drr}, invoke \eqref{eq:cL_drr}, and let $\alpha\leq \min\crki{(1-\lambda^2)/(2mL\sqrt{14}), (1-\lambda^2)/(3L\sqrt{6m})}$: 
        \small
        \begin{equation}
            \label{eq:epoch_drr}
            \begin{aligned}
                &\condE{\norm{\x_{t+1}^0 - \1(\bx{t+1}{0})^{\T}}^2}{\cF_t^0}\\ 
                &\leq \crk{\prt{\frac{1 + \lambda^2}{2}}^m +  \frac{48\alpha^2 L^2\brk{1 - \prt{\frac{1 + \lambda^2}{2}}^m}}{(1-\lambda^2)^2}}\norm{\x_t^0 - \1(\bx{t}{0})^{\T}}^2\\
                &\quad + \frac{12\alpha^2mn}{1-\lambda^2}\norm{\nabla f(\bx{t}{0})}^2+ \frac{24\alpha^2 L mn}{1-\lambda^2}\brk{\prt{f(\bx{t}{0}) - f^*} + \sigfmn},
            \end{aligned}
        \end{equation}\normalsize

        
        Denote $\Qdrr_t := f(\bx{t}{0}) - f^* + \frac{8\alpha L^2}{n(1-\lambda^2)}\normi{\x_t^0 - \Bxt{0}}^2.$
        Letting the stepsize $\alpha$ satisfy $\alpha\leq \min \crki{\prti{1-\lambda^2}/\prti{4\sqrt{6} L}, \prti{1-\lambda^2}/\prti{4mL\sqrt{14}}, \prti{1-\lambda^2}/\prti{8L\sqrt{3}}},$
        we have from Lemma \ref{lem:descent} and \eqref{eq:epoch_drr} that 
        \small
        \begin{align*}
            &\condE{\Qdrr_{t + 1}}{\cF_t^0} 
            \leq \brk{1 + \frac{54\alpha^3 m^2L^3}{(1-\lambda^2)^2} + \frac{192\alpha^3 L^3 m}{(1-\lambda^2)^2}}\Qdrr_t\\
            &\quad - \frac{\alpha m}{4}\norm{\nabla f(\bx{t}{0})}^2 + \frac{246\alpha^3 m^2 L^3}{(1-\lambda^2)^2} \sigfmn.
        \end{align*}\normalsize

        Taking full expectation and applying Lemma \ref{lem:hhelp} yield the desired result.




\subsection{Proof of Lemma \ref{lem:cons_e_m}}
\label{app:cons_e_m}
        Denote $\C(\gamma, m):=\sum_{j=0}^{m-1}(\gamma^2)^{m-1-j}$. Setting $q =[(1+\gamma^2)/(2\gamma^{2})]^m-1$ and $\ell = m-1$ in \eqref{eq:cond_e_s1} yields 
        \small
        \begin{equation}
            \label{eq:cons_e_m_s1}
            \begin{aligned}
                &\condE{\norm{\e_t^{m}}^2}{\cF_t^0} 
                \leq \prt{\frac{1 + \gamma^2}{2}}^m\norm{\e_t^0}^2+ \frac{3\alpha^2_tmn L^2\C_4}{\normi{V}^2(1-\gamma^2)} \condE{\cL_t}{\cF_t^0}\\
                &+ \frac{3\alpha_t^2\C_4}{\normi{V}^2}\C(\gamma, m) \sumn\summ \condE{\norm{\nabla f_{i,\ell}(\bx{t}{0}) - \nabla f_i(\bx{t}{0})}^2}{\cF_t^0},
            \end{aligned}
        \end{equation}\normalsize
        where we apply the result of Lemma \ref{lem:rr} that $\condEi{\normi{\nabla \fp{i}{j}(\bx{t}{0}) - \nabla f_i(\bx{t}{0})}^2}{\cF_t^0}= \frac{1}{m}\summ \condEi{\normi{\nabla f_{i,\ell}(\bx{t}{0}) - \nabla f_i(\bx{t}{0})}^2}{\cF_t^0},\forall j=0,1,\ldots, m-1$.

        For inequality \eqref{eq:cons_e_m_s1}, invoking Lemma \ref{lem:bounded_var}, setting the stepsize $\alpha_t$ satisfy $\alpha_t \leq \min\crki{\prti{1-\gamma^2}/\prti{\sqrt{24mL^2 \C_4}}, \prti{1-\gamma^2}/\prti{\sqrt{4m^2L^2\C_1}}},$
        and substituting the result of Lemma \ref{lem:cL} yield the desired result.

\subsection{Proof of Theorem \ref{thm:linear}}
\label{app:linear}

    Our goal is to establish the recursion for $\frac{1}{n}\sumn\E\brki{f(x_{i,t}^{0})-f^*}$, whose relation to $\E\brki{f(\bx{t}{0}) - f^*}$ is given by
    \small
    \begin{align}
        \label{eq:point_error}
        \frac{1}{n}\sumn\prt{f(x_{i,t}^{0})-f^*}
        \leq 2\prt{f(\bar{x}_{t}^{0})-f^*} + \frac{L\normi{V}^2\normi{\e_t^0}^2}{n}.
    \end{align}\normalsize


    \textbf{Step 1: Derive the improved recursion of $\condE{Q_t}{\cF_t^0}$ under the PL condition.} 
    Invoking the PL condition (Assumption \ref{as:PL}) on Lemma \ref{lem:final_re} yields $\condEi{Q_{t + 1}}{\cF_t^0}
    \leq \prti{1 - \alpha_t\mu m/4} Q_t + \prti{8\alpha_t^3m^2L^3\C_3\sigfmn}/\prti{1-\gamma^2},$
    where we set $\alpha_t$ as
    \begin{align}
        \label{eq:a1}
        \alpha_t \leq \min\crk{\sqrt{\frac{(1-\gamma^2)\mu}{64 m L^3 \C_1}}, \sqrt{\frac{\mu(1-\gamma^2)}{768 mL^3 \C_4}}, {\frac{1-\gamma^2}{m\mu}}}.
    \end{align}

    \textbf{Option I: Constant stepsize}. If a constant stepsize $\alpha_t = \alpha$ is used, then, 
    \begin{equation}
        \label{eq:Qt_cstep}
        \begin{aligned}
            \E\brk{Q_t} &\leq \prt{1 - \frac{\alpha\mu m}{4}}^{t}Q_0 + \frac{32\alpha^2 m L^3\C_3}{\mu(1-\gamma^2)}\sigfmn.
        \end{aligned}
    \end{equation}

    \textbf{Option II: Decreasing stepsize.} Denote $\kappa:= L/\mu$. If decreasing stepsize $\alpha_t = \theta/(m\mu(t + K))$ is used for some constants $\theta, K >0$, then for $\theta > 16$ \cite[Lemma 11]{huang2023cedas} yields
    \begin{equation}
        \label{eq:Qt_dstep}
        \begin{aligned}
            \E\brk{Q_t} &\leq \prt{\frac{K}{t + K}}^{\frac{\theta}{4}} Q_0 + \frac{64\kappa^3\theta^3\C_3}{m(\theta - 8)(1-\gamma^2)(t + K)^2}\sigfmn.
        \end{aligned}
    \end{equation}

    \textbf{Option III: Uniform bound}. It can be verified from \eqref{eq:Qt_cstep} and \eqref{eq:Qt_dstep} that $\E[Q_t]$ has a uniform bound for both constant and decreasing stepsizes satisfying \eqref{eq:a1}, $\E\brk{Q_t} \leq Q_0 + 6\sigfmn.$

    \textbf{Step 2: Derive the decoupled recursion for $\E\brki{\normi{\e_t^0}^2}$.} We have $(f(\bx{t}{0}) - f^*)\leq Q_t$ for any $t\geq 0$. 
    Then, taking full expectation on the recursion of $\E[\normi{\e_t^0}^2|\cF_t^0]$ in Lemma \ref{lem:cons_e_m} yields
    \begin{equation}
        \label{eq:Qt_et}
        \begin{aligned}
            &\E\brk{\norm{\e_{t+1}^{0}}^2} 
            \leq q_0\E\brk{\norm{\e_t^0}^2}\\
            &+ 6\alpha_t^2mnL\norm{V^{-1}}^2\norm{\hLambda_a}^2\C_5\prt{\C(\gamma, m) + 1 + \frac{4\alpha_t^2m^3L^2}{1-\gamma^2}},
        \end{aligned}
    \end{equation}
    where $\C_5:=Q_0 + 7\sigfmn$.

    \textbf{Option I: Constant stepsize}. Consider a constant stepsize $\alpha_t = \alpha$. 
    Note that $\sum_{j=0}^{t-1}q_0^{t-1-j}\C(\gamma, m)
    \leq 4/\prti{1-\gamma^2}.$
    
    
    Then, \eqref{eq:Qt_et} becomes
    \begin{equation}
        \label{eq:et_cstep}
        \begin{aligned}
            &\E\brk{\norm{\e_{t}^0}^2}
            \leq q_0^{t} \norm{\e_0^0}^2 + \frac{48\alpha^2mnL\normi{V^{-1}}^2\norm{\hLambda_a}^2\C_5}{1-\gamma^2}\\
            &\quad + \frac{96\alpha^4m^4nL^3\normi{V^{-1}}^2\norm{\hLambda_a}^2\C_5}{(1-\gamma^2)^2}.
        \end{aligned}
    \end{equation}
    
    \textbf{Option II: Decreasing stepsize}. We first state Lemma \ref{lem:cons_unrolling} (\cite[Lemma 12]{huang2023cedas}) that helps us unrolling the inequality \eqref{eq:Qt_et} when we use decreasing stepsize $\alpha_t = \theta/(\mu m (t + K))$. 
    \begin{lemma}
        \label{lem:cons_unrolling}
        Suppose we have two sequences of positive numbers $\crki{a_k}$ and $\crki{b_k}$ satisfying $m\geq 1/\brki{1-(1 - b_1/2)^{1/p}}$ and $a_{k+1}\leq (1 - b_1)a_k + \sum_{i=2}^p \frac{b_i}{(k + m)^i}, b_1<1,$
        then $a_k \leq (1 - b_1)^k a_0 + \sum_{i=2}^p \frac{2b_i}{b_1(k + m)^i}.$ 
    \end{lemma}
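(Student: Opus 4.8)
The plan is to prove the claimed estimate $a_k \le (1-b_1)^k a_0 + \sum_{i=2}^p \frac{2b_i}{b_1(k+m)^i}$ directly by induction on $k$, since this is a purely deterministic scalar recursion. Write $P_k$ for the claimed right-hand side. The base case $k=0$ holds trivially: $P_0 = a_0 + \sum_{i=2}^p \frac{2b_i}{b_1 m^i} \ge a_0$ because every $b_i$ is positive and $b_1 \in (0,1)$.

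For the inductive step I would assume $a_k \le P_k$, substitute into the hypothesized recursion, and use $1-b_1 \in (0,1)$ to preserve the inequality after multiplying $P_k$. Collecting terms gives
\begin{equation*}
a_{k+1} \le (1-b_1)P_k + \sum_{i=2}^p \frac{b_i}{(k+m)^i} = (1-b_1)^{k+1}a_0 + \sum_{i=2}^p \frac{b_i}{(k+m)^i}\Big(\frac{2(1-b_1)}{b_1} + 1\Big),
\end{equation*}
and since $\frac{2(1-b_1)}{b_1} + 1 = \frac{2-b_1}{b_1}$, it suffices to verify, term by term for each $2 \le i \le p$, the scalar inequality $\frac{2-b_1}{(k+m)^i} \le \frac{2}{(k+m+1)^i}$, equivalently $\big(1 + \tfrac{1}{k+m}\big)^i \le \frac{2}{2-b_1} = (1-b_1/2)^{-1}$.

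The only use of the hypothesis on $m$ is in this last inequality. Because $\big(1+\tfrac{1}{k+m}\big)^i$ is decreasing in $k$ and increasing in $i$, the worst case is $k=0$, $i=p$, so it is enough to check $\big(1+\tfrac1m\big)^p \le (1-b_1/2)^{-1}$, i.e. $m \ge \big[(1-b_1/2)^{-1/p}-1\big]^{-1}$. Writing $x := (1-b_1/2)^{1/p} \in (0,1)$, this threshold equals $\tfrac{x}{1-x}$, which is bounded above by $\tfrac{1}{1-x} = \big[1-(1-b_1/2)^{1/p}\big]^{-1}$ --- exactly the quantity the hypothesis forces $m$ to exceed. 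Hence the hypothesis suffices and the induction closes. I do not foresee a real obstacle: the argument is elementary, and the only place warranting care is matching the stated (slightly conservative) $m$-threshold to the sharper one the term-by-term comparison actually needs, together with the monotonicity observation that reduces the verification to the single worst case $k=0$, $i=p$.
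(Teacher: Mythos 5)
Your induction is correct and complete. Note, however, that the paper does not prove this lemma at all: it is imported verbatim as \cite[Lemma C.2]{huang2023cedas}, so there is no in-paper argument to compare against. Your self-contained verification checks out at every step: the recursion $(1-b_1)\cdot\frac{2b_i}{b_1} + b_i = \frac{(2-b_1)b_i}{b_1}$ is right, the reduction to $\bigl(1+\tfrac{1}{k+m}\bigr)^i \le (1-b_1/2)^{-1}$ is right, the worst case is indeed $k=0$, $i=p$ by the monotonicity you cite, and the stated hypothesis $m \ge \bigl[1-(1-b_1/2)^{1/p}\bigr]^{-1}$ dominates the sharp threshold $x/(1-x)$ with $x=(1-b_1/2)^{1/p}$, exactly as you observe. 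The only implicit ingredients worth making explicit are that $1-b_1>0$ (so multiplying the induction hypothesis by $1-b_1$ preserves the inequality) and that the $b_i$ are positive (used both in the base case and in the term-by-term division), both of which are supplied by the lemma's hypotheses.
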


    Therefore, the inequality \eqref{eq:Qt_et} becomes 
    \small
    \begin{equation}
        \label{eq:et_decreasing} 
        \begin{aligned}
            &\E\brk{\norm{\e_{t+1}^0}^2}
            \leq q_0^t \norm{\e_0^0}^2 +  \frac{48\theta^4 nL^3\normi{V^{-1}}^2\normi{\hLambda_a}^2\C_5 }{(1-q_0)\mu^4 (1-\gamma^2)(t+K)^4}\\
            & \frac{12\theta^2 nL\normi{V^{-1}}^2\normi{\hLambda_a}^2\C_5\prt{\C(\gamma, m) + 1}}{(1-q_0)m\mu^2 (t+K)^2},
        \end{aligned}
    \end{equation}\normalsize
    where we invoke Lemma \ref{lem:cons_unrolling} for $K\geq 32/(1-\gamma^2)$. 
    
    Noting that $1/(1-q_0)\leq 3/\prti{1-\gamma^2}$ and $\C(\gamma, m)/\prti{1-q_0} \leq 4/\prti{1-\gamma^2}$,
    the inequality \eqref{eq:et_decreasing} further leads to
    \begin{align*}
        &\E\brk{\norm{\e_{t}^0}^2} \leq q_0^t \norm{\e_0^0}^2 + \frac{144\theta^2 nL\normi{V^{-1}}^2\normi{\hLambda_a}^2\C_5}{(1-\gamma^2)m\mu^2 (t+K)^2}\\
        &\quad + \frac{96\theta^4 nL^3\normi{V^{-1}}^2\normi{\hLambda_a}^2\C_5 }{\mu^4 (1-\gamma^2)^2(t+K)^4}.
    \end{align*}

    \textbf{Step 3: Derive the total error bounds for both constant and decreasing stepsizes.} Applying relation \eqref{eq:point_error} to the corresponding options in Step 1 and 2 yields the desired results.


\bibliographystyle{IEEEtran}
\bibliography{references_all}

\begin{thebibliography}{10}
\providecommand{\url}[1]{#1}
\csname url@samestyle\endcsname
\providecommand{\newblock}{\relax}
\providecommand{\bibinfo}[2]{#2}
\providecommand{\BIBentrySTDinterwordspacing}{\spaceskip=0pt\relax}
\providecommand{\BIBentryALTinterwordstretchfactor}{4}
\providecommand{\BIBentryALTinterwordspacing}{\spaceskip=\fontdimen2\font plus
\BIBentryALTinterwordstretchfactor\fontdimen3\font minus \fontdimen4\font\relax}
\providecommand{\BIBforeignlanguage}[2]{{%
\expandafter\ifx\csname l@#1\endcsname\relax
\typeout{** WARNING: IEEEtran.bst: No hyphenation pattern has been}%
\typeout{** loaded for the language `#1'. Using the pattern for}%
\typeout{** the default language instead.}%
\else
\language=\csname l@#1\endcsname
\fi
#2}}
\providecommand{\BIBdecl}{\relax}
\BIBdecl

\bibitem{lu2021optimal}
Y.~Lu and C.~De~Sa, ``Optimal complexity in decentralized training,'' in \emph{International Conference on Machine Learning}.\hskip 1em plus 0.5em minus 0.4em\relax PMLR, 2021, pp. 7111--7123.

\bibitem{nedic2018network}
A.~Nedi{\'c}, A.~Olshevsky, and M.~G. Rabbat, ``Network topology and communication-computation tradeoffs in decentralized optimization,'' \emph{Proceedings of the IEEE}, vol. 106, no.~5, pp. 953--976, 2018.

\bibitem{shi2015extra}
W.~Shi, Q.~Ling, G.~Wu, and W.~Yin, ``Extra: An exact first-order algorithm for decentralized consensus optimization,'' \emph{SIAM Journal on Optimization}, vol.~25, no.~2, pp. 944--966, 2015.

\bibitem{pu2021distributed}
S.~Pu and A.~Nedi{\'c}, ``Distributed stochastic gradient tracking methods,'' \emph{Mathematical Programming}, vol. 187, no.~1, pp. 409--457, 2021.

\bibitem{pu2021sharp}
S.~Pu, A.~Olshevsky, and I.~C. Paschalidis, ``A sharp estimate on the transient time of distributed stochastic gradient descent,'' \emph{IEEE Transactions on Automatic Control}, 2021.

\bibitem{tang2018d}
H.~Tang, X.~Lian, M.~Yan, C.~Zhang, and J.~Liu, ``D2: Decentralized training over decentralized data,'' in \emph{International Conference on Machine Learning}, 2018, pp. 4848--4856.

\bibitem{yuan2020influence}
K.~Yuan, S.~A. Alghunaim, B.~Ying, and A.~H. Sayed, ``On the influence of bias-correction on distributed stochastic optimization,'' \emph{IEEE Transactions on Signal Processing}, vol.~68, pp. 4352--4367, 2020.

\bibitem{lian2017can}
X.~Lian, C.~Zhang, H.~Zhang, C.-J. Hsieh, W.~Zhang, and J.~Liu, ``Can decentralized algorithms outperform centralized algorithms? a case study for decentralized parallel stochastic gradient descent,'' in \emph{NIPS}, 2017, pp. 5336--5346.

\bibitem{xin2021improved}
R.~Xin, U.~A. Khan, and S.~Kar, ``An improved convergence analysis for decentralized online stochastic non-convex optimization,'' \emph{IEEE Transactions on Signal Processing}, vol.~69, pp. 1842--1858, 2021.

\bibitem{li2019decentralized}
Z.~Li, W.~Shi, and M.~Yan, ``A decentralized proximal-gradient method with network independent step-sizes and separated convergence rates,'' \emph{IEEE Transactions on Signal Processing}, vol.~67, no.~17, pp. 4494--4506, 2019.

\bibitem{paszke2019pytorch}
A.~Paszke, S.~Gross, F.~Massa, A.~Lerer, J.~Bradbury, G.~Chanan, T.~Killeen, Z.~Lin, N.~Gimelshein, L.~Antiga \emph{et~al.}, ``Pytorch: An imperative style, high-performance deep learning library,'' \emph{Advances in neural information processing systems}, vol.~32, 2019.

\bibitem{abadi2016tensorflow}
M.~Abadi, A.~Agarwal, P.~Barham, E.~Brevdo, Z.~Chen, C.~Citro, G.~S. Corrado, A.~Davis, J.~Dean, M.~Devin \emph{et~al.}, ``Tensorflow: Large-scale machine learning on heterogeneous distributed systems,'' \emph{arXiv preprint arXiv:1603.04467}, 2016.

\bibitem{bottou2009curiously}
L.~Bottou, ``Curiously fast convergence of some stochastic gradient descent algorithms,'' in \emph{Proceedings of the symposium on learning and data science, Paris}, vol.~8, 2009, pp. 2624--2633.

\bibitem{bottou2012stochastic}
------, ``Stochastic gradient descent tricks,'' in \emph{Neural networks: Tricks of the trade}.\hskip 1em plus 0.5em minus 0.4em\relax Springer, 2012, pp. 421--436.

\bibitem{mishchenko2020random}
K.~Mishchenko, A.~Khaled Ragab~Bayoumi, and P.~Richt{\'a}rik, ``Random reshuffling: Simple analysis with vast improvements,'' \emph{Advances in Neural Information Processing Systems}, vol.~33, 2020.

\bibitem{nguyen2020unified}
L.~M. Nguyen, Q.~Tran-Dinh, D.~T. Phan, P.~H. Nguyen, and M.~Van~Dijk, ``A unified convergence analysis for shuffling-type gradient methods,'' \emph{The Journal of Machine Learning Research}, vol.~22, no.~1, pp. 9397--9440, 2021.

\bibitem{yun2022minibatch}
\BIBentryALTinterwordspacing
C.~Yun, S.~Rajput, and S.~Sra, ``Minibatch vs local {SGD} with shuffling: Tight convergence bounds and beyond,'' in \emph{International Conference on Learning Representations}, 2022. [Online]. Available: \url{https://openreview.net/forum?id=LdlwbBP2mlq}
\BIBentrySTDinterwordspacing

\bibitem{huang2023drr}
K.~Huang, X.~Li, A.~Milzarek, S.~Pu, and J.~Qiu, ``Distributed random reshuffling over networks,'' \emph{IEEE Transactions on Signal Processing}, vol.~71, pp. 1143--1158, 2023.

\bibitem{nedic2009distributed}
A.~Nedic and A.~Ozdaglar, ``Distributed subgradient methods for multi-agent optimization,'' \emph{IEEE Transactions on Automatic Control}, vol.~54, no.~1, pp. 48--61, 2009.

\bibitem{alghunaim2021unified}
S.~A. Alghunaim and K.~Yuan, ``A unified and refined convergence analysis for non-convex decentralized learning,'' \emph{IEEE Transactions on Signal Processing}, vol.~70, pp. 3264--3279, 2022.

\bibitem{karimi2016linear}
H.~Karimi, J.~Nutini, and M.~Schmidt, ``Linear convergence of gradient and proximal-gradient methods under the polyak-{\l}ojasiewicz condition,'' in \emph{Joint European conference on machine learning and knowledge discovery in databases}.\hskip 1em plus 0.5em minus 0.4em\relax Springer, 2016, pp. 795--811.

\bibitem{koloskova2021improved}
A.~Koloskova, T.~Lin, and S.~U. Stich, ``An improved analysis of gradient tracking for decentralized machine learning,'' \emph{Advances in Neural Information Processing Systems}, vol.~34, pp. 11\,422--11\,435, 2021.

\bibitem{alghunaim2023enhanced}
S.~A. Alghunaim and K.~Yuan, ``An enhanced gradient-tracking bound for distributed online stochastic convex optimization,'' \emph{arXiv preprint arXiv:2301.02855}, 2023.

\bibitem{song2021optimal}
Z.~Song, L.~Shi, S.~Pu, and M.~Yan, ``Optimal gradient tracking for decentralized optimization,'' \emph{arXiv preprint arXiv:2110.05282}, 2021.

\bibitem{yuan2021removing}
K.~Yuan and S.~A. Alghunaim, ``Removing data heterogeneity influence enhances network topology dependence of decentralized sgd,'' 2021.

\bibitem{huang2021improving}
K.~Huang and S.~Pu, ``Improving the transient times for distributed stochastic gradient methods,'' \emph{IEEE Transactions on Automatic Control}, 2022.

\bibitem{huang2023distributed}
K.~Huang, X.~Li, and S.~Pu, ``Distributed stochastic optimization under a general variance condition,'' \emph{arXiv preprint arXiv:2301.12677}, 2023.

\bibitem{huang2022tackling}
\BIBentryALTinterwordspacing
Y.~Huang, Y.~Sun, Z.~Zhu, C.~Yan, and J.~Xu, ``Tackling data heterogeneity: A new unified framework for decentralized {SGD} with sample-induced topology,'' in \emph{Proceedings of the 39th International Conference on Machine Learning}, ser. Proceedings of Machine Learning Research, K.~Chaudhuri, S.~Jegelka, L.~Song, C.~Szepesvari, G.~Niu, and S.~Sabato, Eds., vol. 162.\hskip 1em plus 0.5em minus 0.4em\relax PMLR, 17--23 Jul 2022, pp. 9310--9345. [Online]. Available: \url{https://proceedings.mlr.press/v162/huang22i.html}
\BIBentrySTDinterwordspacing

\bibitem{ying2018stochastic}
B.~Ying, K.~Yuan, S.~Vlaski, and A.~H. Sayed, ``Stochastic learning under random reshuffling with constant step-sizes,'' \emph{IEEE Transactions on Signal Processing}, vol.~67, no.~2, pp. 474--489, 2018.

\bibitem{gurbu2019}
M.~G{\"u}rb{\"u}zbalaban, A.~Ozdaglar, and P.~Parrilo, ``Why random reshuffling beats stochastic gradient descent,'' \emph{Math. Program.}, vol. 186, no. 1-2, pp. 49--84, 2021.

\bibitem{haochen2019}
J.~Z. {HaoChen} and S.~Sra, ``Random shuffling beats {SGD} after finite epochs,'' in \emph{International Conference on Machine Learning}, 2019, pp. 2624--2633.

\bibitem{lix2021convergence}
X.~Li, A.~Milzarek, and J.~Qiu, ``Convergence of random reshuffling under the kurdyka-$\{$$\backslash$L$\}$ ojasiewicz inequality,'' \emph{arXiv preprint arXiv:2110.04926}, 2021.

\bibitem{cha2023tighter}
J.~Cha, J.~Lee, and C.~Yun, ``Tighter lower bounds for shuffling sgd: Random permutations and beyond,'' \emph{arXiv preprint arXiv:2303.07160}, 2023.

\bibitem{yuan2018variance}
K.~Yuan, B.~Ying, J.~Liu, and A.~H. Sayed, ``Variance-reduced stochastic learning by networked agents under random reshuffling,'' \emph{IEEE Transactions on Signal Processing}, vol.~67, no.~2, pp. 351--366, 2018.

\bibitem{jiang2022distributed}
X.~Jiang, X.~Zeng, J.~Sun, J.~Chen, and L.~Xie, ``Distributed stochastic proximal algorithm with random reshuffling for nonsmooth finite-sum optimization,'' \emph{IEEE Transactions on Neural Networks and Learning Systems}, 2022.

\bibitem{yuan2018exact}
K.~Yuan, B.~Ying, X.~Zhao, and A.~H. Sayed, ``Exact diffusion for distributed optimization and learning—part i: Algorithm development,'' \emph{IEEE Transactions on Signal Processing}, vol.~67, no.~3, pp. 708--723, 2018.

\bibitem{reddi2016fast}
S.~J. Reddi, S.~Sra, B.~P{\'o}czos, and A.~Smola, ``Fast incremental method for smooth nonconvex optimization,'' in \emph{2016 IEEE 55th conference on decision and control (CDC)}.\hskip 1em plus 0.5em minus 0.4em\relax IEEE, 2016, pp. 1971--1977.

\bibitem{xin2021fast}
R.~Xin, U.~A. Khan, and S.~Kar, ``A fast randomized incremental gradient method for decentralized non-convex optimization,'' \emph{IEEE Transactions on Automatic Control}, pp. 1--1, 2021.

\bibitem{xin2021hybrid}
R.~Xin, U.~Khan, and S.~Kar, ``A hybrid variance-reduced method for decentralized stochastic non-convex optimization,'' in \emph{International Conference on Machine Learning}.\hskip 1em plus 0.5em minus 0.4em\relax PMLR, 2021, pp. 11\,459--11\,469.

\bibitem{pan2020d}
T.~Pan, J.~Liu, and J.~Wang, ``D-spider-sfo: A decentralized optimization algorithm with faster convergence rate for nonconvex problems,'' in \emph{Proceedings of the AAAI Conference on Artificial Intelligence}, vol.~34, no.~02, 2020, pp. 1619--1626.

\bibitem{sun2020improving}
H.~Sun, S.~Lu, and M.~Hong, ``Improving the sample and communication complexity for decentralized non-convex optimization: Joint gradient estimation and tracking,'' in \emph{International conference on machine learning}.\hskip 1em plus 0.5em minus 0.4em\relax PMLR, 2020, pp. 9217--9228.

\bibitem{koloskova2020unified}
A.~Koloskova, N.~Loizou, S.~Boreiri, M.~Jaggi, and S.~Stich, ``A unified theory of decentralized sgd with changing topology and local updates,'' pp. 5381--5393, 2020.

\bibitem{xu2021distributed}
J.~Xu, Y.~Tian, Y.~Sun, and G.~Scutari, ``Distributed algorithms for composite optimization: Unified framework and convergence analysis,'' \emph{IEEE Transactions on Signal Processing}, 2021.

\bibitem{yuan2021decentralized}
S.~A. Alghunaim, E.~K. Ryu, K.~Yuan, and A.~H. Sayed, ``Decentralized proximal gradient algorithms with linear convergence rates,'' \emph{IEEE Transactions on Automatic Control}, vol.~66, no.~6, pp. 2787--2794, 2021.

\bibitem{ye2022snap}
H.~Ye and X.~Chang, ``Snap-shot decentralized stochastic gradient tracking methods,'' \emph{arXiv preprint arXiv:2212.05273}, 2022.

\bibitem{zhang2021fedpd}
X.~Zhang, M.~Hong, S.~Dhople, W.~Yin, and Y.~Liu, ``Fedpd: A federated learning framework with adaptivity to non-iid data,'' \emph{IEEE Transactions on Signal Processing}, vol.~69, pp. 6055--6070, 2021.

\bibitem{qureshi2020s}
M.~I. Qureshi, R.~Xin, S.~Kar, and U.~A. Khan, ``S-addopt: Decentralized stochastic first-order optimization over directed graphs,'' \emph{IEEE Control Systems Letters}, vol.~5, no.~3, pp. 953--958, 2020.

\bibitem{xu2017convergence}
J.~Xu, S.~Zhu, Y.~C. Soh, and L.~Xie, ``Convergence of asynchronous distributed gradient methods over stochastic networks,'' \emph{IEEE Transactions on Automatic Control}, vol.~63, no.~2, pp. 434--448, 2017.

\bibitem{mnist}
Y.~{Lecun}, L.~{Bottou}, Y.~{Bengio}, and P.~{Haffner}, ``Gradient-based learning applied to document recognition,'' \emph{Proceedings of the IEEE}, vol.~86, no.~11, pp. 2278--2324, 1998.

\bibitem{huang2023cedas}
K.~Huang and S.~Pu, ``Cedas: A compressed decentralized stochastic gradient method with improved convergence,'' 2023.

\end{thebibliography}

\end{document}